\newtheorem{theorem}{Theorem}
\theoremstyle{plain}
\newtheorem{corollary}{Corollary}
\newtheorem{example}{Example}
\newtheorem{lemma}{Lemma}
\numberwithin{equation}{section}
\begin{document}
\title[Characterizations of AG-groupoids by their intuitionistic fuzzy ideals%
]{Characterizations of Abel-Grassmann's groupoids by their intuitionistic
fuzzy ideals}
\subjclass[2000]{20M10, 20N99}
\author{}
\maketitle

\begin{center}
\textbf{Madad Khan and Faisal}

\textbf{Department of Mathematics}

\textbf{COMSATS Institute of Information Technology}

\textbf{Abbottabad, Pakistan.}

\textbf{E-mail: madadmath@yahoo.com}

\textbf{E-mail: yousafzaimath@yahoo.com}
\end{center}

\QTP{Body Math}
$\bigskip $

\textbf{Abstract. }In this paper, we have introduced the concept of
intuitionistic fuzzy ideals in an AG-groupoids. We have characterized
regular and intra-regular AG-groupoids in terms of intuitionistic fuzzy left
(right, two-sided) ideals, fuzzy (generalized) bi-ideals and intuitionistic
fuzzy $(1,2)$-ideals. We have proved that all the intuitionistic fuzzy
ideals coincides in regular and intra-regular AG-groupoids. It has been
shown that the set of intuitionistic fuzzy two sided ideals of a regular
AG-groupoid forms a semilattice structure. We have also given some useful
conditions for an AG-groupoid to become an intra-regular AG-groupoid in
terms their intuitionistic fuzzy ideals.

\textbf{Keywords. }AG-groupoids, regular AG-groupoids, intra-regular
AG-groupoids and intuitionistic fuzzy ideals.

\begin{center}
\bigskip

{\LARGE Introduction}
\end{center}

The concept of fuzzy sets was first proposed by Zadeh \cite{L.A.Zadeh} in $%
1965$. Several researchers were conducted on the generalization of the
notion of fuzzy set. Given a set $S$, a fuzzy subset of $S$ is, by
definition an arbitrary mapping $f:S\rightarrow \lbrack 0,1]$ where $[0,1]$
is the unit interval. A fuzzy subset is a class of objects with a grades of
membership. The concept of fuzzy set was applied in \cite{cha} to generalize
the basic concepts of general topology. A. Rosenfeld \cite{A. Rosenfeld} was
the first who consider the case of a groupoid in terms of fuzzy sets. Kuroki
has been first studied the fuzzy sets on semigroups \cite{N. Kuroki}.

As an important generalization of the notion of fuzzy set , Atanassov \cite%
{at}, introduced the concept of an intuitionistic fuzzy set. De et al. \cite%
{7} studied the Sanchez's approach for medical diagnosis and extended this
concept with the notion of intuitionistic fuzzy set theory. Dengfeng and
Chunfian \cite{8} introduced the concept of the degree of similarity between
intuitionistic fuzzy sets, which may be finite or continuous, and gave
corresponding proofs of these similarity measure and discussed applications
of the similarity measures between intuitionistic fuzzy sets to pattern
recognition problems. Intuitionistic fuzzy sets have many applications in
mathematics, Davvaz et al. \cite{6}, applied this concept in $H_{v}$%
-modules. They introduced the notion of an intuitionistic fuzzy $H_{v}$%
-submodule of an $H_{v}$-module and studied the related properties. Jun in 
\cite{10}, introduced the concept of an intuitionistic fuzzy bi-ideal in
ordered semigroups and characterized the basic properties of ordered
semigroups in terms of intuitionistic fuzzy bi-ideals. In \cite{15} and \cite%
{16}, Kim and Jun introduced the concept of intuitionistic fuzzy interior
ideals of semigroups. In \cite{20}, Shabir and Khan gave the concept of an
intuitionistic fuzzy interior ideal of ordered semigroups and characterized
different classes of ordered semigroups in terms of intuitionistic fuzzy
interior ideals. They also gave the concept of an intuitionistic fuzzy
generalized bi-ideal in \cite{21} and discussed different classes of ordered
semigroups in terms of intuitionistic fuzzy generalized bi-ideals.

In this paper, we consider the intuitionistic fuzzification of the concept
of several ideals in AG-groupoid and investigate some properties of such
ideals.

An AG-groupoid is a non-associative algebraic structure mid way between a
groupoid and a commutative semigroup. The left identity in an AG-groupoid if
exists is unique \cite{Mus3}. An AG-groupoid is non-associative and
non-commutative algebraic structure, nevertheless, it posses many
interesting properties which we usually find in associative and commutative
algebraic structures. An AG-groupoid with right identity becomes a
commutative monoid \cite{Mus3}. An AG-groupoid is basically the
generalization of semigroup (see \cite{Kaz}) with wide range of applications
in theory of flocks \cite{nas}. The theory of flocks tries to describes the
human behavior and interaction.

The concept of an Abel-Grassmann's groupoid (AG-groupoid) \cite{Kaz} was
first given by M. A. Kazim and M. Naseeruddin in $1972$ and they called it
left almost semigroup (LA-semigroup). P. Holgate call it simple invertive
groupoid \cite{hol}. An AG-groupoid is a groupoid having the left invertive
law,%
\begin{equation}
(ab)c=(cb)a\text{, for\ all }a\text{, }b\text{, }c\in S\text{.}  \tag{$1$}
\end{equation}%
In an AG-groupoid, the medial law \cite{Kaz} holds,%
\begin{equation}
(ab)(cd)=(ac)(bd)\text{, for\ all }a\text{, }b\text{, }c\text{, }d\in S\text{%
.}  \tag{$2$}
\end{equation}%
In an AG-groupoid $S$ with left identity, the paramedial law holds,%
\begin{equation}
(ab)(cd)=(dc)(ba),\text{ for\ all }a,b,c,d\in S.  \tag{$3$}
\end{equation}%
If an AG-groupoid contains a left identity, the following law holds,

\begin{equation}
a(bc)=b(ac)\text{, for\ all }a\text{, }b\text{, }c\in S\text{.}  \tag{$4$}
\end{equation}

\begin{center}
\bigskip

{\LARGE Preliminaries}
\end{center}

Let $S$ be an AG-groupoid, by an AG-subgroupoid of $S,$ we means a non-empty
subset $A$ of $S$ such that $A^{2}\subseteq A$.

A non-empty subset $A$ of an AG-groupoid $S$ is called left (right) ideal of 
$S$ if $SA\subseteq A$ $(AS\subseteq A)$.

A non-empty subset $A$ of an AG-groupoid $S$ is called two-sided ideal or
simply ideal if it is both a left and a right ideal of $S$.

A non empty subset $A$ of an AG-groupoid $S$ is called generalized bi-ideal
of $S$ if $(AS)A\subseteq A$.

An AG-subgroupoid $A$ of $S$ is called bi-ideal of $S$ if $(AS)A\subseteq A$.

An AG-subgroupoid $A$ of an AG-groupoid $S$ is called $(1,2)$ ideal of $S$
if $(AS)A^{2}\subseteq A.$

\bigskip

Let $S$ be a non empty set, a fuzzy subset $f$ of $S$ is, by definition an
arbitrary mapping $f:S\rightarrow \lbrack 0,1]$ where $[0,1]$ is the unit
interval. A fuzzy subset $f$ is a class of objects with a grades of
membership having the form

\begin{center}
$f=\{(x,$ $f(x))/x\in S\}.$
\end{center}

An intuitionistic fuzzy set (briefly, $IFS$) $A$ in a non empty set $S$ is
an object having the form

\begin{center}
$A=\left\{ (x,\mu _{A}(x),\gamma _{A}(x))/x\in S\right\} .$
\end{center}

The functions $\mu _{A}:S\longrightarrow \lbrack 0,1]$ and $\gamma
_{A}:S\longrightarrow \lbrack 0,1]$ denote the degree of membership and the
degree of nonmembership respectively such that for all $x\in S,$ we have

\begin{center}
$0\leq \mu _{A}(x)+\gamma _{A}(x)\leq 1.$
\end{center}

\QTP{Body Math}
For the sake of simplicity, we shall use the symbol $A=(\mu _{A},\gamma
_{A}) $ for the $IFS$ $A=\left\{ (x,\mu _{A}(x),\gamma _{A}(x))/x\in
S\right\} .$

\QTP{Body Math}
Let $\delta =\left\{ (x,S_{\delta }(x),\Theta _{\delta }(x))/S_{\delta }(x)=1%
\text{ and }\Theta _{\delta }(x)=0\text{ for all }x\in S\right\} =(S_{\delta
},\Theta _{\delta })$ be an $IFS,$ then $\delta =(S_{\delta },\Theta
_{\delta })$ will be carried out in operations with an $IFS$ $A=(\mu
_{A},\gamma _{A})$ such that $S_{\delta }$ and $\Theta _{\delta }$ will be
used in collaboration with $\mu _{A}$ and $\gamma _{A}$ respectively.

An $IFS$ $A=(\mu _{A},\gamma _{A})$ of an AG-groupoid $S$ is called an
intuitionistic fuzzy AG-subgroupoid of $S$ if $\mu _{A}(xy)\geq \mu
_{A}(x)\wedge \mu _{A}(y)$ and $\gamma _{A}(xy)\leq \gamma _{A}(x)\vee
\gamma _{A}(y)$ for all $x$, $y\in S.$

An $IFS$ $A=(\mu _{A},\gamma _{A})$ of an AG-groupoid $S$ is called an
intuitionistic fuzzy left ideal of $S$ if $\mu _{A}(xy)\geq \mu _{A}(y)$ and 
$\gamma _{A}(xy)\leq \gamma _{A}(y)$ for all $x$, $y\in S.$

An $IFS$ $A=(\mu _{A},\gamma _{A})$ of an AG-groupoid $S$ is called an
intuitionistic fuzzy right ideal of $S$ if $\mu _{A}(xy)\geq \mu _{A}(x)$
and $\gamma _{A}(xy)\leq \gamma _{A}(x)$ for all $x$, $y\in S.$

An $IFS$ $A=(\mu _{A},\gamma _{A})$ of an AG-groupoid $S$ is called fuzzy
two-sided ideal of $S$ if it is both an intuitionistic fuzzy left and an
intuitionistic fuzzy right ideal of $S$.

An $IFS$ $A=(\mu _{A},\gamma _{A})$ of an AG-groupoid $S$ is called an
intuitionistic fuzzy generalized bi-ideal of $S$ if $\mu _{A}((xa)y)\geq \mu
_{A}(x)\wedge \mu _{A}(y)$ and $\gamma _{A}((xa)y)\leq \gamma _{A}(x)\vee
\gamma _{A}(y)$ for all $x$, $a$ and $y\in S$.

An intuitionistic fuzzy AG-subgroupoid $A=(\mu _{A},\gamma _{A})$ of an
AG-groupoid $S$ is called an intuitionistic fuzzy bi-ideal of $S$ if $\mu
_{A}((xa)y)\geq \mu _{A}(x)\wedge \mu _{A}(y)$ and $\gamma _{A}((xa)y)\leq
\gamma _{A}(x)\vee \gamma _{A}(y)$ for all $x$, $a$ and $y\in S$.

An intuitionistic fuzzy AG-subgroupoid $A=(\mu _{A},\gamma _{A})$ of an
AG-groupoid $S$ is called an intuitionistic fuzzy $(1,2)$-ideal of $S$ if $%
\mu _{A}((xw)(yz))\geq \mu _{A}(x)\wedge \mu _{A}(y)\wedge \mu _{A}(z)$ and $%
\gamma _{A}((xw)(yz))\leq \gamma _{A}(x)\vee \gamma _{A}(y)\vee \gamma
_{A}(z)$ for all $x$, $a$ and $y\in S$.

Let $S$ be an AG-groupoid and let $\phi \neq A\subseteq S,$ then the
intuitionistic characteristic function $\chi _{A}=(\mu _{\chi _{A}},\gamma
_{\chi _{A}})$ of $A$ is defined as

\begin{center}
$\mu _{\chi _{A}}(x)=\left\{ 
\begin{array}{c}
1\text{, if }x\in A \\ 
0\text{, if }x\notin A%
\end{array}%
\right. $ and $\gamma _{\chi _{A}}(x)=\left\{ 
\begin{array}{c}
0\text{, if }x\in A \\ 
1\text{, if }x\notin A%
\end{array}%
\right. $
\end{center}

\QTP{Body Math}
It is clear that $\gamma _{\chi _{A}}$ acts as a complement of $\mu _{\chi
_{A}},$ that is, $\gamma _{\chi _{A}}=\mu _{\chi _{A^{C}}}.$

\QTP{Body Math}
Note that in an AG-groupoid $S$ with left identity, $S=S^{2}.$

An element $a$ of an AG-groupoid $S$ is called regular if there exists $x\in
S$ such that $a=(ax)a$ and $S$ is called regular if every element of $S$ is
regular.

\begin{example}
\label{ex}Let $S=\{a,b,c,d,e\}$ be an AG-groupoid with left identity $d$
with the following multiplication table.
\end{example}

\begin{center}
\begin{tabular}{l|lllll}
. & $a$ & $b$ & $c$ & $d$ & $e$ \\ \hline
$a$ & $a$ & $a$ & $a$ & $a$ & $a$ \\ 
$b$ & $a$ & $b$ & $b$ & $b$ & $b$ \\ 
$c$ & $a$ & $b$ & $d$ & $e$ & $c$ \\ 
$d$ & $a$ & $b$ & $c$ & $d$ & $e$ \\ 
$e$ & $a$ & $b$ & $e$ & $c$ & $d$%
\end{tabular}

By routine calculation, it is easy to check that $S$ is regular.
\end{center}

Define an $IFS$ $A=(\mu _{A},\gamma _{A})$ of $S$ as follows: $\mu _{A}(a)=1$%
, $\mu _{A}(b)=$ $\mu _{A}(c)=$ $\mu _{A}(d)=$ $\mu _{A}(e)=0,$ $\gamma
_{A}(a)=0.3,$ $\gamma _{A}(b)=0.4$ and $\gamma _{A}(c)=\gamma _{A}(d)=\gamma
_{A}(e)=0.2,$ then clearly $A=(\mu _{A},\gamma _{A})$ is an intuitionistic
fuzzy two-sided ideal of $S$.

\begin{theorem}
\label{aw}Let $S$ be an AG-groupoid with left identity and let $A=(\mu
_{A},\gamma _{A})$ be any $IFS$ of $S$, then $S$ is regular if $%
A(x)=A(x^{2}) $ holds for all $x$ in $S$.
\end{theorem}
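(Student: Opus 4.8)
The plan is to turn the fuzzy hypothesis into a concrete factorisation of an arbitrary element $a\in S$ and then to reshape that factorisation into the regular form $a=(ax)a$ using the laws $(1)$--$(4)$. Since the only way a condition of the shape $A(x)=A(x^{2})$ can force structure is through characteristic functions, I would read the hypothesis as applying to the intuitionistic characteristic function $\chi_{I}$ of a well-chosen ideal $I$ built from $a$; the natural candidate is $I=Sa^{2}$.

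First I would verify that $Sb$ is a two-sided ideal of $S$ for every $b\in S$. The right-ideal inclusion is immediate from the left invertive law, since $(sb)t=(tb)s\in Sb$. For the left-ideal inclusion one needs the left identity $e$: starting from $x(yb)$, the law $a(bc)=b(ac)$ together with the left invertive law gives $x(yb)=y(xb)=y((bx)e)=(bx)(ye)=((ye)x)b\in Sb$, so $S(Sb)\subseteq Sb$. Taking $b=a^{2}$ shows $Sa^{2}$ is a two-sided ideal, and $a^{2}=e a^{2}\in Sa^{2}$. Its characteristic function $\chi_{Sa^{2}}$ is then an intuitionistic fuzzy two-sided ideal, hence an admissible choice of the $IFS$ $A$; evaluating the hypothesis at $x=a$ gives $\chi_{Sa^{2}}(a)=\chi_{Sa^{2}}(a^{2})=(1,0)$ because $a^{2}\in Sa^{2}$, whence $a\in Sa^{2}$, i.e. $a=sa^{2}$ for some $s\in S$.

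The remaining work is purely computational. Writing $s=pq$, which is legitimate because $S=S^{2}$, the paramedial law gives $a=(pq)(aa)=(aa)(qp)=a^{2}(qp)$, then $a(bc)=b(ac)$ yields $a=q(a^{2}p)$, and after a further factorisation $q=q_{1}q_{2}$ the medial law produces $a=(q_{1}a^{2})(q_{2}p)$, which already exhibits $a$ in intra-regular form. I expect the genuine obstacle to be the last mile: rearranging $a=sa^{2}$ all the way to $a=(ax)a$. This forces one to move a copy of $a$ to the outermost left position while keeping a factor of the shape $ax$, which means inserting and deleting the left identity $e$ at precisely the right places and alternating the left invertive and medial laws. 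I would carry this out as a single explicit chain of equalities, justifying each bracket change by exactly one of $(1)$--$(4)$, rather than hoping for one decisive substitution.
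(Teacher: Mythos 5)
Your setup is essentially the paper's: since $x^{2}S=(xx)(SS)=(SS)(xx)=Sx^{2}$ by the paramedial law, your test set $Sa^{2}$ coincides with the paper's $x^{2}S$, and applying the hypothesis to its intuitionistic characteristic function correctly forces $a\in a^{2}S$. (Your verification that $Sa^{2}$ is a two-sided ideal is superfluous: the hypothesis is assumed for \emph{every} $IFS$, and the characteristic function of an arbitrary subset is already an $IFS$, which is all the paper uses.) The genuine gap is that you stop exactly where the real work begins. Passing from $a\in a^{2}S$ to $a\in (aS)a$ is not a routine ``last mile'': it is the entire content of the theorem beyond the standard characteristic-function trick, and your proposal only announces a strategy (``inserting and deleting the left identity at precisely the right places'') without exhibiting a single step of the required chain. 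Your detour through the intra-regular form $a=(q_{1}a^{2})(q_{2}p)$ does not help, because intra-regularity does not by itself yield the regular form $a=(ax)a$, and the paper never passes through it.

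The missing idea is a \emph{second} substitution of the containment $x\in x^{2}S$ into itself. The paper first uses the left invertive law to expose an inner factor $x$, namely $x^{2}S=(xx)(SS)=((SS)x)x$, then replaces that inner $x$ by $x^{2}S$ again, and only then can laws $(4)$ and $(3)$ collect two copies of $x$ into the shape $(xS)x$:
\begin{align*}
x\in x^{2}S&=(xx)(SS)=((SS)x)x\subseteq ((SS)(x^{2}S))x=((SS)((Sx)x))x\\
&=((Sx)(Sx))x=((xS)(xS))x=(x((xS)S))x\subseteq (xS)x.
\end{align*}
Without this re-substitution the computation starting from $a=sa^{2}$ stalls at the intra-regular form, which is precisely where your argument ends; so as written the proposal does not prove regularity.
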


\begin{proof}
Assume that $S$ be an AG-groupoid with left identity. Clearly $x^{2}S$ is a
subset of $S$ and therefore its characteristic function $\chi _{x^{2}S}=(\mu
_{\chi _{x^{2}S}},\gamma _{\chi _{x^{2}S}})$ is an $IFS$ of $S$. Let $x\in
S, $ then by given assumption $\mu _{\chi _{x^{2}S}}(x)=\mu _{\chi
_{x^{2}S}}(x^{2})$ and $\gamma _{\chi _{x^{2}S}}(x)=\gamma _{\chi
_{x^{2}S}}(x^{2})$ holds for all $x\in S.$ As $x^{2}\in x^{2}S,$ because by
using $(3)$, we have 
\begin{equation*}
x^{2}S=(xx)(SS)=(SS)(xx)=Sx^{2}.
\end{equation*}%
Therefore $\mu _{\chi _{x^{2}S}}(x^{2})=1$ and $\gamma _{\chi
_{x^{2}S}}(x^{2})=0,$ which implies that $x\in x^{2}S.$ Now by using $(1),$ $%
(4)$ and $(3)$, we have%
\begin{eqnarray*}
x &\in &x^{2}S=(xx)(SS)=((SS)x)x\subseteq ((SS)(x^{2}S))x=((SS)((xx)S))x \\
&=&((SS)((Sx)x))x=((Sx)(Sx))x=((xS)(xS))x=(x((xS)S))x\subseteq (xS)x.
\end{eqnarray*}%
Thus $S$ is regular.
\end{proof}

The converse is not true in general. For this let us consider a regular
AG-groupoid $S$ in Example \ref{ex}. Define an $IFS$ $A=(\mu _{A},\gamma
_{A})$ of $S$ as follows: $\mu _{A}(a)=0.6,$ $\mu _{A}(b)=0.2$, $\mu
_{A}(c)=\mu _{A}(d)=\mu _{A}(e)=0.9,$ $\gamma _{A}(a)=0.7,$ $\gamma
_{A}(b)=0.3$ and $\gamma _{A}(c)=\gamma _{A}(d)=\gamma _{A}(e)=1,$ then it
is easy to see that $\mu _{A}(a)\neq \mu _{A}(a^{2})$ and $\gamma
_{A}(a)\neq \gamma _{A}(a^{2}),$ that is, $A(a)\neq A(a^{2})$ for $a\in S.$

Let $A=(\mu _{A},\gamma _{A})$ and $B=(\mu _{B},\gamma _{B})$ are $IFSs$ of
an AG-groupoid $S.$ The symbols $A\cap B$ will means the following $IFS$ of $%
S$

\begin{center}
$(\mu _{A}\cap \mu _{A})(x)=\min \{\mu _{A}(x),\mu _{A}(x)\}=\mu
_{A}(x)\wedge \mu _{A}(x),$ for all $x$ in $S.$

$(\gamma _{A}\cup \gamma _{A})(x)=\max \{\gamma _{A}(x),\gamma
_{A}(x)\}=\gamma _{A}(x)\vee \gamma _{A}(x),$ for all $x$ in $S.$
\end{center}

\QTP{Body Math}
The symbols $A\cup B$ will means the following $IFS$ of $S$

\begin{center}
$(\mu _{A}\cup \mu _{A})(x)=\max \{\mu _{A}(x),\mu _{A}(x)\}=\mu _{A}(x)\vee
\mu _{A}(x),$ for all $x$ in $S.$

$(\gamma _{A}\cap \gamma _{A})(x)=\min \{\gamma _{A}(x),\gamma
_{A}(x)\}=\gamma _{A}(x)\wedge \gamma _{A}(x),$ for all $x$ in $S.$
\end{center}

$A\subseteq B$ means that

\begin{center}
$\mu _{A}(x)\leq \mu _{A}(x)$ and $\gamma _{A}(x)\geq \gamma _{A}(x)$ for
all $x$ in $S.$
\end{center}

Let $A=(\mu _{A},\gamma _{A})$ and $B=(\mu _{B},\gamma _{B})$ be any two $%
IFSs$ of an AG-groupoid $S$, then the product $A\circ B$ is defined by,

\begin{equation*}
\left( \mu _{A}\circ \mu _{B}\right) (a)=\left\{ 
\begin{array}{c}
\dbigvee\limits_{a=bc}\left\{ \mu _{A}(b)\wedge \mu _{B}(c)\right\} \text{,
if }a=bc\text{ for some }b,\text{ }c\in S. \\ 
0,\text{ otherwise.}%
\end{array}%
\right.
\end{equation*}

\begin{equation*}
\left( \gamma _{A}\circ \gamma _{B}\right) (a)=\left\{ 
\begin{array}{c}
\dbigwedge\limits_{a=bc}\left\{ \gamma _{A}(b)\vee \gamma _{B}(c)\right\} 
\text{, if }a=bc\text{ for some }b,\text{ }c\in S. \\ 
1,\text{ otherwise.}%
\end{array}%
\right.
\end{equation*}

\begin{lemma}
$($\cite{Mordeson},\cite{mad}$)$ \label{as}Let $S$ be an AG-groupoid$,$ then
the following holds.
\end{lemma}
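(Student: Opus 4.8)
The plan is to verify each of the displayed identities by unwinding the definition of the product $\circ$ and reducing it, term by term, to the corresponding pointwise law among (1)--(4) that already holds in the underlying AG-groupoid $S$. I would treat the membership part $\mu$ and the non-membership part $\gamma$ in parallel: the only difference is that wherever the $\mu$-computation uses $\bigvee$ and $\wedge$, the companion $\gamma$-computation uses $\bigwedge$ and $\vee$, so once the $\mu$-case is settled the $\gamma$-case follows by the identical reindexing argument with the lattice operations dualised. Since the statement is quoted from \cite{Mordeson} and \cite{mad}, I expect it to consist of the $\circ$-versions of the left invertive, medial, paramedial and (4) laws, and I would prove them in that order.

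First I would expand a double product at a fixed point $a\in S$. Directly from the definition,
\[
((\mu_A\circ\mu_B)\circ\mu_C)(a)=\bigvee_{a=(pq)r}\bigl(\mu_A(p)\wedge\mu_B(q)\wedge\mu_C(r)\bigr),
\]
the join ranging over all factorisations $a=(pq)r$ with $p,q,r\in S$, and equal to $0$ when no such factorisation exists. The heart of the matter is then purely combinatorial: the left invertive law $(pq)r=(rq)p$ from (1) sets up a bijection between factorisations of $a$ of the shape $(pq)r$ and those of the shape $(rq)p$, so relabelling the summation indices rewrites the right-hand side above as $\bigvee_{a=(pq)r}\bigl(\mu_C(r)\wedge\mu_B(q)\wedge\mu_A(p)\bigr)$, which is precisely $((\mu_C\circ\mu_B)\circ\mu_A)(a)$. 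This establishes the left-invertive identity for $\circ$. The medial identity is handled the same way, expanding $((\mu_A\circ\mu_B)\circ(\mu_C\circ\mu_D))(a)$ as a join over factorisations $a=(pq)(rs)$ and using $(pq)(rs)=(pr)(qs)$ from (2) to reindex; for the two identities that require a left identity I would feed (3) and (4) into the same reindexing step.

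The main obstacle I anticipate is bookkeeping rather than conceptual: I must check that each reindexing really is a bijection on factorisations, not merely that every term on one side is dominated by some term on the other, and I must dispose of the degenerate cases in which $a$ admits no factorisation of the required form, where both sides collapse to $0$ (respectively $1$ for the $\gamma$-part). A clean way to sidestep any asymmetry is to prove the two inclusions $\mathrm{LHS}\le\mathrm{RHS}$ and $\mathrm{RHS}\le\mathrm{LHS}$ separately, each by taking an arbitrary factorisation of $a$ witnessing one side and producing, via the relevant AG-law, a matching factorisation witnessing the other with the same (or dominating) meet of membership values. Because (1)--(4) are genuine equalities in $S$, such a matching factorisation is always available, so the two inclusions combine to the claimed equality and the whole lemma follows.
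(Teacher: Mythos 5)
Your proposal proves the wrong statement. Lemma \ref{as} is not the assertion that the convolution product $\circ$ satisfies the AG-groupoid laws $(1)$--$(4)$; its content is the two items listed immediately after the environment: $(i)$ an $IFS$ $A=(\mu _{A},\gamma _{A})$ is an intuitionistic fuzzy AG-subgroupoid of $S$ if and only if $\mu _{A}\circ \mu _{A}\subseteq \mu _{A}$ and $\gamma _{A}\circ \gamma _{A}\supseteq \gamma _{A}$, and $(ii)$ $A$ is an intuitionistic fuzzy left (right) ideal if and only if $S_{\delta }\circ \mu _{A}\subseteq \mu _{A}$ and $\Theta _{\delta }\circ \gamma _{A}\supseteq \gamma _{A}$ (respectively $\mu _{A}\circ S_{\delta }\subseteq \mu _{A}$ and $\gamma _{A}\circ \Theta _{\delta }\supseteq \gamma _{A}$). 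Nothing in your argument engages the defining pointwise inequalities $\mu _{A}(xy)\geq \mu _{A}(x)\wedge \mu _{A}(y)$ or $\mu _{A}(xy)\geq \mu _{A}(y)$, so the equivalence to be established is never addressed. Your reindexing-of-factorisations machinery is sound in itself, and the convolution identities you describe are true, but they are a different result.

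The intended argument is shorter and does not need any of the laws $(1)$--$(4)$. For $(i)$: if $A$ is an intuitionistic fuzzy AG-subgroupoid, then for each factorisation $a=bc$ the term $\mu _{A}(b)\wedge \mu _{A}(c)$ in the join defining $(\mu _{A}\circ \mu _{A})(a)$ is at most $\mu _{A}(bc)=\mu _{A}(a)$, and if $a$ admits no factorisation the value is $0\leq \mu _{A}(a)$; hence $\mu _{A}\circ \mu _{A}\subseteq \mu _{A}$. Conversely, since $xy=xy$ is one admissible factorisation, $\mu _{A}(xy)\geq (\mu _{A}\circ \mu _{A})(xy)\geq \mu _{A}(x)\wedge \mu _{A}(y)$. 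The $\gamma $-part is the order-dual computation, and $(ii)$ is the same argument with one factor replaced by $S_{\delta }\equiv 1$ (resp. $\Theta _{\delta }\equiv 0$). Note also that the paper supplies no proof of this lemma at all, quoting it from \cite{Mordeson} and \cite{mad}, so there is no in-paper proof for your route to match; but a submitted proof must at least prove the quoted statement.
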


$(i)$ An $IFS$ $A=(\mu _{A},\gamma _{A})$ is an intuitionistic fuzzy
AG-subgroupoid of $S$ if and only if $\mu _{A}\circ \mu _{A}\subseteq \mu
_{A}$ and $\gamma _{A}\circ \gamma _{A}\supseteq \gamma _{A}.$

$(ii)$ An $IFS$ $A=(\mu _{A},\gamma _{A})$ is intuitionistic fuzzy left
(right) ideal of $S$ if and only if $S\circ \mu _{A}\subseteq \mu _{A}$ and $%
\Theta \circ \gamma _{A}\supseteq \gamma _{A}$ $(\mu _{A}\circ S\subseteq
\mu _{A}$ and $\gamma _{A}\circ \Theta \supseteq \gamma _{A}).$

\begin{lemma}
\label{fgh}Let $S$ be an AG-groupoid and let $A=(\mu _{A},\gamma _{A})$ and $%
B=(\mu _{B},\gamma _{B})$ are any intuitionistic fuzzy two sided ideals of $%
S,$ then $A\circ B=A\cap B$.
\end{lemma}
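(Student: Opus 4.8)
The plan is to prove the equality of the intuitionistic fuzzy sets $A\circ B$ and $A\cap B$ by establishing the two inclusions $A\circ B\subseteq A\cap B$ and $A\cap B\subseteq A\circ B$ separately, and within each inclusion treating the membership component $\mu$ and the non-membership component $\gamma$ on their own. Recall that for intuitionistic fuzzy sets the containment $X\subseteq Y$ unwinds to $\mu_X\le\mu_Y$ together with $\gamma_X\ge\gamma_Y$; thus $A\circ B\subseteq A\cap B$ amounts to the pointwise inequalities $(\mu_A\circ\mu_B)(a)\le\mu_A(a)\wedge\mu_B(a)$ and $(\gamma_A\circ\gamma_B)(a)\ge\gamma_A(a)\vee\gamma_B(a)$ for every $a\in S$, and dually for the reverse inclusion.

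For $A\circ B\subseteq A\cap B$ I would argue directly from the definitions. Fix $a\in S$. If $a$ admits no factorization $a=bc$, then $(\mu_A\circ\mu_B)(a)=0\le(\mu_A\cap\mu_B)(a)$ and $(\gamma_A\circ\gamma_B)(a)=1\ge(\gamma_A\cup\gamma_B)(a)$ trivially. Otherwise, for any factorization $a=bc$ I use that $A$ is an intuitionistic fuzzy right ideal, giving $\mu_A(b)\le\mu_A(bc)=\mu_A(a)$ and $\gamma_A(b)\ge\gamma_A(bc)=\gamma_A(a)$, and that $B$ is an intuitionistic fuzzy left ideal, giving $\mu_B(c)\le\mu_B(bc)=\mu_B(a)$ and $\gamma_B(c)\ge\gamma_B(bc)=\gamma_B(a)$. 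Hence $\mu_A(b)\wedge\mu_B(c)\le\mu_A(a)\wedge\mu_B(a)$ and $\gamma_A(b)\vee\gamma_B(c)\ge\gamma_A(a)\vee\gamma_B(a)$ for every such pair $b,c$; taking the supremum (respectively the infimum) over all factorizations of $a$ yields the two desired inequalities. This half uses nothing beyond the two-sided ideal hypotheses.

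The reverse inclusion $A\cap B\subseteq A\circ B$ is the real content, and it is where a structural hypothesis on $S$ must enter, since a product of fuzzy sets can only ``see'' elements that actually factor: if some $a$ failed to lie in $S^{2}$ we would have $(\mu_A\circ\mu_B)(a)=0$ while $(\mu_A\cap\mu_B)(a)$ could be positive, so the inclusion cannot hold without a factorization of $a$ adapted to the ideal values. The natural device is the regular representation $a=(ax)a$ for a suitable $x\in S$: writing $b=ax$ and $c=a$ gives a factorization $a=bc$ in which $\mu_A(b)=\mu_A(ax)\ge\mu_A(a)$ (because $A$ is a right ideal) and $\mu_B(c)=\mu_B(a)$, whence $\mu_A(b)\wedge\mu_B(c)\ge\mu_A(a)\wedge\mu_B(a)$ and therefore $(\mu_A\circ\mu_B)(a)\ge(\mu_A\cap\mu_B)(a)$; the dual computation using $\gamma_A(ax)\le\gamma_A(a)$ gives $(\gamma_A\circ\gamma_B)(a)\le(\gamma_A\cup\gamma_B)(a)$. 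Combining the two inclusions gives $A\circ B=A\cap B$. The main obstacle is thus precisely the production of this factorization: the forward inclusion is formal, but the reverse forces one to invoke regularity (the standing hypothesis in this part of the paper), and I would read the statement with that assumption in force, since for a general AG-groupoid the equality would fail on non-factorable elements.
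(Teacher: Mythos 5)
Your proof is correct and follows essentially the same route as the paper's: the forward inclusion $A\circ B\subseteq A\cap B$ from the two-sided ideal properties (the paper cites its Lemma \ref{as} where you unwind the definitions directly), and the reverse inclusion from the factorization $a=(ax)a$ supplied by regularity. You were also right to flag the hypothesis issue --- the lemma as stated omits the word ``regular,'' but the paper's own proof silently assumes $S$ is a regular AG-groupoid, exactly as you do.
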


\begin{proof}
Assume that $A=(\mu _{A},\gamma _{A})$ and $B=(\mu _{B},\gamma _{B})$ are
any intuitionistic fuzzy two sided ideals of a regular AG-groupoid $S$, then
by using Lemma \ref{as}, we have $\mu _{A}\circ \mu _{B}\subseteq \mu
_{A}\cap \mu _{B}$ and $\gamma _{A}\circ \gamma _{B}\supseteq \gamma
_{A}\cup \gamma _{B},$ which shows that $A\circ B\subseteq A\cap B$. Let $%
a\in S,$ then there exists $x\in S$ such that $a=(ax)a$ and therefore, we
have%
\begin{eqnarray*}
(\mu _{A}\circ \mu _{B})(a) &=&\dbigvee\limits_{a=(ax)a}\{\mu _{A}(ax)\wedge
\mu _{B}(a)\}\geq \mu _{A}(ax)\wedge \mu _{B}(a) \\
&\geq &\mu _{A}(a)\wedge \mu _{B}(a)=(\mu _{A}\cap \mu _{B})(a)
\end{eqnarray*}

and%
\begin{eqnarray*}
(\gamma _{A}\circ \gamma _{A})(a) &=&\dbigwedge\limits_{a=(ax)a}\left\{
\gamma _{A}(ax)\vee \gamma _{A}(a)\right\} \leq \gamma _{A}(ax)\vee \gamma
_{A}(a) \\
&\leq &\gamma _{A}(a)\vee \gamma _{A}(a)=(\gamma _{A}\cup \gamma _{A})(a).
\end{eqnarray*}

Thus we get that $\mu _{A}\circ \mu _{B}\supseteq \mu _{A}\cap \mu _{B}$ and 
$\gamma _{A}\circ \gamma _{B}\subseteq \gamma _{A}\cup \gamma _{B},$ which
give us $A\circ B\supseteq A\cap B$ and therefore $A\circ B=A\cap B.$
\end{proof}

\begin{example}
\label{e2}Let us consider an AG-groupoid $S=\left\{ a,b,c,d,e\right\} $ with
left identity $d$ in the following Cayley's table.
\end{example}

\begin{center}
\begin{tabular}{l|lllll}
. & $a$ & $b$ & $c$ & $d$ & $e$ \\ \hline
$a$ & $a$ & $a$ & $a$ & $a$ & $a$ \\ 
$b$ & $a$ & $e$ & $e$ & $c$ & $e$ \\ 
$c$ & $a$ & $e$ & $e$ & $b$ & $e$ \\ 
$d$ & $a$ & $b$ & $c$ & $d$ & $e$ \\ 
$e$ & $a$ & $e$ & $e$ & $e$ & $e$%
\end{tabular}
\end{center}

Note that $S$ is not regular, because $c\in S$ is not regular

The converse of Lemma \ref{fgh} is not true in general which is discussed in
the following.

Let us define an $IFS$ $A=(\mu _{A},\gamma _{A})$ of an AG-groupoid $S$ in
Example \ref{e2} as follows: $\mu _{A}(a)=\mu _{A}(b)=\mu _{A}(c)=0.3,$ $\mu
_{A}(d)=0.1$, $\mu _{A}(e)=0.4,$ $\gamma _{A}(a)=0.2,$ $\gamma _{A}(b)=0.3,$ 
$\gamma _{A}(c)=0.4,$ $\gamma _{A}(d)=0.5,$ $\gamma _{A}(e)=0.2.$ Then it is
easy to see that $A=(\mu _{A},\gamma _{A})$ is an intuitionistic fuzzy two
sided ideals of $S.$ Now again define an $IFS$ $B=(\mu _{B},\gamma _{B})$ of
an AG-groupoid $S$ in Example \ref{e2} as follows: $\mu _{B}(a)=\mu
_{B}(b)=\mu _{B}(c)=0.5,$ $\mu _{B}(d)=0.4$, $\mu _{B}(e)=0.6,$ $\gamma
_{B}(a)=0.3,$ $\gamma _{B}(b)=0.4,$ $\gamma _{B}(c)=0.5,$ $\gamma
_{B}(d)=0.6,$ $\gamma _{B}(e)=0.3.$ Then it is easy to observe that $A=(\mu
_{A},\gamma _{A})$ is an intuitionistic fuzzy two sided ideals of $S$ such
that $(\mu _{A}\circ \mu _{B})(a)=\{0.1,$ $0.3,$ $0.4\}=(\mu _{A}\cap \mu
_{B})(a)$ for all $a\in S$ and similarly $(\gamma _{A}\circ \gamma
_{B})(a)=(\gamma _{A}\cap \gamma _{B})$ for all $a\in S$, that is, $A\circ
B=A\cap B$ but $S$ is not regular$.$

An $IFS$ $A=(\mu _{A},\gamma _{A})$ of an AG-groupoid is said to be
idempotent if $\mu _{A}\circ \mu _{A}=\mu _{A}$ and $\gamma _{A}\circ \gamma
_{A}=\gamma _{A},$ that is, $A\circ A=A$ or $A^{2}=A.$

\begin{lemma}
\label{idem}Every intuitionistic fuzzy two-sided ideal $A=(\mu _{A},\gamma
_{A})$ of a regular AG-groupoid is idempotent.
\end{lemma}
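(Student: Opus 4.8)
The plan is to recognize that this statement is an immediate specialization of Lemma \ref{fgh}. Since an intuitionistic fuzzy two-sided ideal $A=(\mu _{A},\gamma _{A})$ is in particular an intuitionistic fuzzy left ideal, it satisfies $\mu _{A}(xy)\geq \mu _{A}(y)\geq \mu _{A}(x)\wedge \mu _{A}(y)$ and $\gamma _{A}(xy)\leq \gamma _{A}(y)\leq \gamma _{A}(x)\vee \gamma _{A}(y)$, so $A$ is an intuitionistic fuzzy AG-subgroupoid. Thus both hypotheses of Lemma \ref{fgh} are met with the choice $B=A$. Applying that lemma gives $A\circ A=A\cap A$, and since $A\cap A=A$ holds trivially (as $\mu _{A}(x)\wedge \mu _{A}(x)=\mu _{A}(x)$ and $\gamma _{A}(x)\vee \gamma _{A}(x)=\gamma _{A}(x)$ for every $x\in S$), I conclude $A\circ A=A$, i.e. $A$ is idempotent.

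If instead a self-contained argument is wanted, I would prove the two inclusions separately. For $A\circ A\subseteq A$, I would use Lemma \ref{as}$(i)$: since $A$ is an AG-subgroupoid one has $\mu _{A}\circ \mu _{A}\subseteq \mu _{A}$ and $\gamma _{A}\circ \gamma _{A}\supseteq \gamma _{A}$. For the reverse inclusion $A\subseteq A\circ A$, I would invoke regularity: given $a\in S$ choose $x\in S$ with $a=(ax)a$, so that
\begin{equation*}
(\mu _{A}\circ \mu _{A})(a)=\dbigvee\limits_{a=pq}\{\mu _{A}(p)\wedge \mu _{A}(q)\}\geq \mu _{A}(ax)\wedge \mu _{A}(a)\geq \mu _{A}(a)\wedge \mu _{A}(a)=\mu _{A}(a),
\end{equation*}
where the last inequality uses the right-ideal property $\mu _{A}(ax)\geq \mu _{A}(a)$. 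The dual computation with $\vee ,\wedge $ interchanged and the inequalities reversed, again using the right-ideal property $\gamma _{A}(ax)\leq \gamma _{A}(a)$, yields $(\gamma _{A}\circ \gamma _{A})(a)\leq \gamma _{A}(a)$. Combining the two inclusions gives $\mu _{A}\circ \mu _{A}=\mu _{A}$ and $\gamma _{A}\circ \gamma _{A}=\gamma _{A}$, which is exactly $A\circ A=A$.

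I do not expect any genuine obstacle here: the substantive work has already been carried out in Lemma \ref{fgh}, and this lemma is its diagonal case $B=A$. The only point requiring care is bookkeeping with the intuitionistic conventions, namely that the containment $A\circ A\subseteq A$ corresponds to $\mu _{A}\circ \mu _{A}\leq \mu _{A}$ together with the reversed inequality $\gamma _{A}\circ \gamma _{A}\geq \gamma _{A}$, so that the membership and nonmembership parts must be tracked with opposite inequalities throughout.
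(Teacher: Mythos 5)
Your proposal is correct and matches the paper's own argument: the paper proves exactly your self-contained version, obtaining $\mu _{A}\circ \mu _{A}\supseteq \mu _{A}$ from the regularity decomposition $a=(ax)a$ together with the right-ideal property $\mu _{A}(ax)\geq \mu _{A}(a)$, and the reverse inclusion from Lemma \ref{as}, with the dual computation for $\gamma _{A}$. Your shortcut of specializing Lemma \ref{fgh} to $B=A$ is also legitimate (and arguably tidier, since that lemma's proof already contains the identical computation), but it is not a genuinely different route.
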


\begin{proof}
Let $S$ be a regular AG-groupoid and let $A=(\mu _{A},\gamma _{A})$ be an
intuitionistic fuzzy two-sided ideal of $S.$ Now for $a\in S$ there exists $%
x\in S$ such that $a=(ax)a$ and therefore, we have 
\begin{eqnarray*}
(\mu _{A}\circ \mu _{A})(a) &=&\dbigvee\limits_{a=(ax)a}\{\mu _{A}(ax)\wedge
\mu _{A}(a)\}\geq \mu _{A}(ax)\wedge \mu _{A}(a) \\
&\geq &\mu _{A}(a)\wedge \mu _{A}(a)=\mu _{A}(a).
\end{eqnarray*}

Which shows that $\mu _{A}\circ \mu _{A}\supseteq \mu _{A}$ and by using
Lemma \ref{as}, $\mu _{A}\circ \mu _{A}\subseteq \mu _{A}$ and therefore $%
\mu _{A}\circ \mu _{A}=\mu _{A}.$ Similarly we can show that $\gamma
_{A}\circ \gamma _{A}=\gamma _{A},$ which shows that $A=(\mu _{A},\gamma
_{A})$ is idempotent.
\end{proof}

\begin{lemma}
\label{qw}In a regular AG-groupoid $S$, $A\circ \delta =A$ and $\delta \circ
A=A$ holds for every intuitionistic fuzzy two-sided ideal $A=(\mu
_{A},\gamma _{A})$ of $S,$ where $\delta =(S_{\delta },\Theta _{\delta }).$
\end{lemma}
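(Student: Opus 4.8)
The plan is to prove the two equalities $A\circ\delta=A$ and $\delta\circ A=A$ by splitting each into a pair of opposite inclusions, in exactly the style of the proof of Lemma \ref{idem}. For one inclusion of each identity I would invoke Lemma \ref{as} (which needs only that $A$ is an ideal), and for the reverse inclusion I would use regularity through the factorization $a=(ax)a$ fed into the definition of the product.

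First I would record the ``easy'' inclusions. Since $A$ is two-sided it is in particular a right ideal, so Lemma \ref{as}$(ii)$, read with $S=S_{\delta}$ and $\Theta=\Theta_{\delta}$, gives $\mu_{A}\circ S_{\delta}\subseteq\mu_{A}$ and $\gamma_{A}\circ\Theta_{\delta}\supseteq\gamma_{A}$; being also a left ideal it gives $S_{\delta}\circ\mu_{A}\subseteq\mu_{A}$ and $\Theta_{\delta}\circ\gamma_{A}\supseteq\gamma_{A}$. Packaged as intuitionistic fuzzy sets, these say $A\circ\delta\subseteq A$ and $\delta\circ A\subseteq A$, so it remains only to prove the reverse inclusions.

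For the reverse inclusions I would fix $a\in S$ and, by regularity, choose $x\in S$ with $a=(ax)a$. Substituting the factorization $a=(ax)\cdot a$ into the product and using $S_{\delta}\equiv 1$, I obtain $(\mu_{A}\circ S_{\delta})(a)=\bigvee_{a=bc}\{\mu_{A}(b)\wedge S_{\delta}(c)\}\geq\mu_{A}(ax)\wedge 1=\mu_{A}(ax)\geq\mu_{A}(a)$, where the last inequality is the right-ideal property $\mu_{A}(ax)\geq\mu_{A}(a)$. The dual computation with the meet and $\Theta_{\delta}\equiv 0$ gives $(\gamma_{A}\circ\Theta_{\delta})(a)\leq\gamma_{A}(ax)\vee 0=\gamma_{A}(ax)\leq\gamma_{A}(a)$. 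Hence $A\circ\delta\supseteq A$, and combining with the previous paragraph $A\circ\delta=A$. For $\delta\circ A$ the same factorization yields $(S_{\delta}\circ\mu_{A})(a)\geq S_{\delta}(ax)\wedge\mu_{A}(a)=\mu_{A}(a)$ and $(\Theta_{\delta}\circ\gamma_{A})(a)\leq\Theta_{\delta}(ax)\vee\gamma_{A}(a)=\gamma_{A}(a)$; here the reverse bound uses only the constant values of $\delta$ and not the ideal property of $A$, since the term $c=a$ already returns $\mu_{A}(a)$ and $\gamma_{A}(a)$. Thus $\delta\circ A\supseteq A$ and therefore $\delta\circ A=A$.

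I do not expect a genuine obstacle: the computation is the one already carried out for Lemma \ref{idem}, with $\delta$ in place of the second factor. The only care needed is the bookkeeping between the membership part $\mu_{A}$ (joins, and the right-ideal inequality $\mu_{A}(ax)\geq\mu_{A}(a)$) and the nonmembership part $\gamma_{A}$ (meets, with all inequalities reversed), together with checking that the two one-sided inclusions genuinely assemble into the intuitionistic fuzzy equalities $A\circ\delta=A$ and $\delta\circ A=A$.
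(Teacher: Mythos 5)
Your proposal is correct and follows essentially the same route as the paper: the easy inclusions come from Lemma \ref{as}, and the reverse inclusions come from feeding the regular factorization $a=(ax)a$ into the definition of the product together with the two-sided ideal property $\mu_{A}(ax)\geq\mu_{A}(a)$, $\gamma_{A}(ax)\leq\gamma_{A}(a)$. If anything, your bookkeeping on the nonmembership part is cleaner than the paper's, which miswrites $\gamma_{A}(ax)\vee\Theta_{\delta}(a)$ as a meet; your version with $\gamma_{A}(ax)\vee 0=\gamma_{A}(ax)$ is the intended computation.
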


\begin{proof}
Let $S$ be a regular AG-groupoid and let $A=(\mu _{A},\gamma _{A})$ be an
intuitionistic fuzzy two-sided ideal of $S$. Now for $a\in S$ there exists $%
x\in S$ such that $a=(ax)a,$ therefore 
\begin{eqnarray*}
(\mu _{A}\circ S_{\delta })(a) &=&\dbigvee\limits_{a=(ax)a}\left\{ \mu
_{A}(ax)\wedge S_{\delta }(a)\right\} \geq \mu _{A}(ax)\wedge S_{\delta }(a)
\\
&\geq &\mu _{A}(a)\wedge 1=\mu _{A}(a)
\end{eqnarray*}

and%
\begin{eqnarray*}
(\gamma _{A}\circ \Theta _{\delta })(a)
&=&\dbigwedge\limits_{a=(ax)a}\left\{ \gamma _{A}(ax)\wedge \Theta _{\delta
}(a)\right\} \leq \gamma _{A}(ax)\wedge \Theta _{\delta }(a) \\
&\leq &\gamma _{A}(a)\wedge 0=\gamma _{A}(a).
\end{eqnarray*}

Which shows that $\mu _{A}\circ S\supseteq \mu _{A}$ and $\gamma _{A}\circ
\Theta _{\delta }\subseteq \gamma _{A}.$ Now by using Lemma \ref{as}, we get 
$\mu _{A}\circ S=\mu _{A}$ and $\gamma _{A}\circ \Theta _{\delta }=\gamma
_{A}$. Therefore $A\circ \delta =A.$ Similarly we can prove that $S\circ
A=A. $
\end{proof}

\begin{corollary}
In a regular AG-groupoid $S$, $A\circ \delta =A$ and $\delta \circ A=A$ hold
for every intuitionistic fuzzy right ideal $A=(\mu _{A},\gamma _{A})$ of $S,$
where $\delta =(S_{\delta },\Theta _{\delta }).$
\end{corollary}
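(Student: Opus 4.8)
The plan is to follow the proof of Lemma~\ref{qw}, isolating exactly which of its two halves survive under the weaker hypothesis that $A=(\mu_A,\gamma_A)$ is only an intuitionistic fuzzy right ideal. First I would record that the identity $A\circ\delta=A$ is already established by the displayed computation in Lemma~\ref{qw}: there, for a regular element $a=(ax)a$, the inequality $(\mu_A\circ S_\delta)(a)\ge\mu_A(ax)\wedge S_\delta(a)\ge\mu_A(a)$ uses only the right-ideal inequality $\mu_A(ax)\ge\mu_A(a)$ (and dually $\gamma_A(ax)\le\gamma_A(a)$), while the reverse inclusions $\mu_A\circ S_\delta\subseteq\mu_A$ and $\gamma_A\circ\Theta_\delta\supseteq\gamma_A$ are exactly the right-ideal characterization of Lemma~\ref{as}$(ii)$. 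Hence $A\circ\delta=A$ holds verbatim for a right ideal.

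The real content is therefore $\delta\circ A=A$. One inclusion is free: taking the factorization $a=(ax)\cdot a$ gives $(S_\delta\circ\mu_A)(a)\ge S_\delta(ax)\wedge\mu_A(a)=\mu_A(a)$ and dually for $\gamma$, so $S_\delta\circ\mu_A\supseteq\mu_A$ and $\Theta_\delta\circ\gamma_A\subseteq\gamma_A$ using regularity alone. The obstacle is the opposite inclusion $S_\delta\circ\mu_A\subseteq\mu_A$ (and $\Theta_\delta\circ\gamma_A\supseteq\gamma_A$): for a two-sided ideal this was supplied by the left-ideal clause of Lemma~\ref{as}$(ii)$, but a right ideal need not satisfy it a priori. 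So the crux is to manufacture the left-ideal inequality $\mu_A(bc)\ge\mu_A(c)$ from the right-ideal hypothesis together with regularity.

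To do this I would argue as follows. Fix $a$ and an arbitrary factorization $a=bc$, and use regularity to write $a=(ax)a$ for some $x$; substituting $bc$ for the trailing factor yields $a=(ax)(bc)$. Applying the paramedial law $(3)$ rewrites this as $a=(cb)(xa)$, which now exhibits $cb$ as a left factor. Two applications of the right-ideal property then give $\mu_A(a)=\mu_A((cb)(xa))\ge\mu_A(cb)\ge\mu_A(c)$, and dually $\gamma_A(a)\le\gamma_A(cb)\le\gamma_A(c)$. Since the factorization $a=bc$ was arbitrary, this bounds every term of $(S_\delta\circ\mu_A)(a)=\bigvee_{a=bc}\mu_A(c)$ by $\mu_A(a)$, giving $S_\delta\circ\mu_A\subseteq\mu_A$, and symmetrically $\Theta_\delta\circ\gamma_A\supseteq\gamma_A$. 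Combined with the free inclusion this yields $\delta\circ A=A$, completing the proof.

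I expect the paramedial step to be the only delicate point, both because it is where the right-ideal hypothesis is leveraged twice and because it relies on the paramedial law $(3)$, i.e., on the presence of a left identity; everything else is a direct transcription of Lemma~\ref{qw}. An alternative, cleaner packaging would be to first prove as a standalone lemma that in a regular AG-groupoid every intuitionistic fuzzy right ideal is two-sided (which is precisely the inequality $\mu_A(bc)\ge\mu_A(c)$ just derived) and then quote Lemma~\ref{qw} directly; either way the paramedial computation is the heart of the matter.
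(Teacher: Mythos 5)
Your overall strategy is the one the paper intends: the corollary is meant to fall out of Lemma~\ref{qw} once the right ideal is upgraded to a two-sided one, and your observation that the $A\circ\delta=A$ half and the ``free'' inclusion $S_{\delta}\circ\mu_{A}\supseteq\mu_{A}$ already go through verbatim for a right ideal is correct. The one genuine problem is the step where you manufacture the left-ideal inequality: you rewrite $a=(ax)(bc)$ as $(cb)(xa)$ via the paramedial law $(3)$, but the paper only guarantees $(3)$ in an AG-groupoid \emph{with left identity}, and the corollary as stated assumes only regularity. You flag this yourself, but flagging it does not repair it; as written your argument proves a weaker statement than the one claimed. (The paper is admittedly inconsistent here --- its own Corollary~\ref{bg} adds ``with left identity'' to essentially the same hypothesis, and its implicit route is presumably Lemma~\ref{qer} followed by Lemma~\ref{qw}, which has the same defect.)

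The gap is easily closed, and without any identity element: instead of applying regularity to $a$, apply it to the left factor of the arbitrary decomposition. If $a=bc$, write $b=(bx)b$ for some $x\in S$; then by the left invertive law $(1)$, which holds in every AG-groupoid,
\begin{equation*}
bc=((bx)b)c=(cb)(bx),
\end{equation*}
and two applications of the right-ideal inequality give $\mu _{A}(bc)=\mu _{A}((cb)(bx))\geq \mu _{A}(cb)\geq \mu _{A}(c)$, with the dual chain for $\gamma _{A}$. This is exactly the inequality you need for $S_{\delta }\circ \mu _{A}\subseteq \mu _{A}$, and it shows that in a regular AG-groupoid every intuitionistic fuzzy right ideal is two-sided, after which Lemma~\ref{qw} applies directly --- the ``cleaner packaging'' you propose, but now valid in the stated generality.
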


\begin{theorem}
The set of intuitionistic fuzzy two-sided ideals of a regular AG-groupoid $S$
forms a semilattice structure with identity $\delta ,$ where $\delta
=(S_{\delta },\Theta _{\delta }).$
\end{theorem}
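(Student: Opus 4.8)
The plan is to take the binary operation on the set of intuitionistic fuzzy two-sided ideals to be the product $\circ$, and to show that with this operation the set becomes a commutative, associative and idempotent semigroup possessing $\delta$ as an identity. The crucial simplification is Lemma \ref{fgh}, which tells us that $A\circ B=A\cap B$ for any two intuitionistic fuzzy two-sided ideals $A$ and $B$; this lets me replace the somewhat awkward product by the far more transparent intersection throughout.

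First I would establish closure: I need $A\circ B$ to again be an intuitionistic fuzzy two-sided ideal. By Lemma \ref{fgh} this amounts to checking that $A\cap B=(\mu_A\wedge\mu_B,\ \gamma_A\vee\gamma_B)$ is a two-sided ideal. This is the routine part: for all $x,y\in S$ one has $(\mu_A\wedge\mu_B)(xy)=\mu_A(xy)\wedge\mu_B(xy)\geq\mu_A(y)\wedge\mu_B(y)=(\mu_A\wedge\mu_B)(y)$ using that $A$ and $B$ are left ideals, and symmetrically for the right-ideal inequality and for the nonmembership functions $\gamma_A\vee\gamma_B$ (with the inequalities reversed). Hence $A\circ B$ lies back in the set, so $\circ$ is a genuinely well-defined binary operation there.

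Next, commutativity and idempotency come almost for free. Commutativity follows since $A\circ B=A\cap B=B\cap A=B\circ A$, because $\wedge$ and $\vee$ are themselves commutative. Idempotency, $A\circ A=A$, is exactly Lemma \ref{idem}. For associativity I would use closure together with Lemma \ref{fgh} twice: since $A\cap B$ is again a two-sided ideal, $(A\circ B)\circ C=(A\cap B)\circ C=(A\cap B)\cap C=A\cap(B\cap C)=A\circ(B\circ C)$, where the middle equality is just associativity of intersection (i.e.\ of $\wedge$ and $\vee$). Finally, the identity claim is Lemma \ref{qw}, which gives $A\circ\delta=A=\delta\circ A$ for every intuitionistic fuzzy two-sided ideal $A$.

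There is no serious obstacle here, since the content has been front-loaded into Lemmas \ref{fgh}, \ref{idem} and \ref{qw}. The one point that genuinely requires attention is the closure verification, because associativity is proved by reducing $\circ$ to $\cap$ via Lemma \ref{fgh}, and that reduction is legitimate only once $A\cap B$ is known to be a two-sided ideal; omitting closure would leave the associativity step unjustified.
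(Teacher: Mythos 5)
Your proof is correct, and its overall skeleton (closure, idempotency from Lemma \ref{idem}, commutativity from Lemma \ref{fgh}, identity from Lemma \ref{qw}) matches the paper's. The one place where you genuinely diverge is associativity. The paper proves it by a chain of swaps on the product itself,
\begin{equation*}
(A\circ B)\circ C=(B\circ A)\circ C=(C\circ A)\circ B=(A\circ C)\circ B=(B\circ C)\circ A=A\circ (B\circ C),
\end{equation*}
alternating commutativity of $\circ$ on ideals (Lemma \ref{fgh}) with the left invertive law for the product of intuitionistic fuzzy sets --- a fact the paper uses tacitly without stating it as a lemma. You instead collapse $\circ$ to $\cap$ once and for all and invoke associativity of $\wedge$ and $\vee$. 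Your route is the more economical one: it avoids any appeal to the invertive law at the level of fuzzy products, and it correctly isolates the one nontrivial prerequisite, namely that $A\cap B$ is again a two-sided ideal so that Lemma \ref{fgh} may be applied a second time to $(A\cap B)\circ C$. The paper needs essentially the same closure fact (its final swap $(B\circ C)\circ A=A\circ(B\circ C)$ also requires $B\circ C$ to be an ideal) but dismisses it with ``clearly $\mathbb{I}_{\mu\gamma}$ is closed,'' so your explicit verification is a genuine improvement in rigor rather than a detour. What the paper's version buys in exchange is that the invertive-law manipulation of $\circ$ is the standard idiom in the AG-groupoid literature and generalizes to settings where the product does not reduce to intersection.
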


\begin{proof}
Let $\mathbb{I}_{\mu \gamma }$ be the set of intuitionistic fuzzy two-sided
ideals of a regular AG-groupoid $S$ and let $A=(\mu _{A},\gamma _{A})$, $%
B=(\mu _{B},\gamma _{B})$ and $C=(\mu _{C},\gamma _{C})$ are any
intuitionistic fuzzy two sided ideals of $\mathbb{I}_{\mu \gamma }.$ Clearly 
$\mathbb{I}_{\mu \gamma }$ is closed and by Lemma \ref{idem}, we have $%
A=A^{2}$. Now by using Lemma \ref{fgh}, we get $A\circ B=B\circ A$ and
therefore, we have%
\begin{equation*}
(A\circ B)\circ C=(B\circ A)\circ C=(C\circ A)\circ B=(A\circ C)\circ
B=(B\circ C)\circ A=A\circ (B\circ C).
\end{equation*}

It is easy to see from Lemma \ref{qw} that $\delta $ is an identity in $%
\mathbb{I}_{\mu \gamma }.$
\end{proof}

\begin{lemma}
\label{qer}Every intuitionistic fuzzy right ideal of an AG-groupoid $S$ with
left identity is an intuitionistic fuzzy left ideal of $S$.
\end{lemma}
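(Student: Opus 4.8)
The plan is to exploit the left identity $e$ together with the left invertive law $(1)$ to rewrite an arbitrary product $xy$ as a product in which $y$ has been "pushed to the left", so that the right-ideal inequalities (which bound the membership of a product from below by the membership of its \emph{left} factor) can be brought to bear. Concretely, since $e$ is a left identity we have $x=ex$, hence $xy=(ex)y$; and applying $(ab)c=(cb)a$ with $a=e$, $b=x$, $c=y$ gives $(ex)y=(yx)e$. Thus the identity I would establish first is
\[
xy=(yx)e\quad\text{for all }x,y\in S.
\]

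First I would fix an intuitionistic fuzzy right ideal $A=(\mu_A,\gamma_A)$ and arbitrary $x,y\in S$, and record the displayed identity. Then I would apply the defining inequalities of a right ideal twice. Since $(yx)e$ is a product whose left factor is $yx$, the right-ideal condition gives $\mu_A((yx)e)\geq\mu_A(yx)$; and since $yx$ is a product whose left factor is $y$, a second application gives $\mu_A(yx)\geq\mu_A(y)$. Chaining these through the identity yields
\[
\mu_A(xy)=\mu_A((yx)e)\geq\mu_A(yx)\geq\mu_A(y).
\]

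The nonmembership half is entirely parallel: the right-ideal condition for $\gamma_A$ reads $\gamma_A(uv)\leq\gamma_A(u)$, so two applications give $\gamma_A(xy)=\gamma_A((yx)e)\leq\gamma_A(yx)\leq\gamma_A(y)$. Together, $\mu_A(xy)\geq\mu_A(y)$ and $\gamma_A(xy)\leq\gamma_A(y)$ for all $x,y\in S$ are precisely the defining conditions of an intuitionistic fuzzy left ideal, which closes the argument.

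The only step requiring any insight is the algebraic rewriting $xy=(yx)e$; everything after it is a mechanical double use of the right-ideal inequalities, and I do not anticipate a genuine obstacle. Conceptually, the left identity is exactly what converts the statement "the left factor controls the value of a product" (the right-ideal property) into "the right factor controls the value of a product" (the left-ideal property), so the existence of $e$ is the one hypothesis doing all the work.
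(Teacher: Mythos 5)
Your proof is correct and follows exactly the paper's own argument: both rewrite $xy=(ex)y=(yx)e$ via the left identity and the left invertive law, then apply the right-ideal inequalities to conclude $\mu_A(xy)\geq\mu_A(y)$ and $\gamma_A(xy)\leq\gamma_A(y)$. Your version merely makes explicit the intermediate step $\mu_A((yx)e)\geq\mu_A(yx)\geq\mu_A(y)$, which the paper compresses into a single inequality.
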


\begin{proof}
Let $S$ be an AG-groupoid with left identity and let $A=(\mu _{A},\gamma
_{A})$ be an intuitionistic fuzzy right ideal of $S$. Now by using $(1)$, we
have%
\begin{equation*}
\mu _{A}(ab)=\mu _{A}((ea)b)=\mu _{A}((ba)e)\geq \mu _{A}(b).
\end{equation*}

Similarly we can show that $\gamma _{A}(ab)\leq \gamma _{A}(b),$ which shows
that $A=(\mu _{A},\gamma _{A})$ is an intuitionistic fuzzy left ideal of $S.$
\end{proof}

The converse is not true in general because if we define an $IFS$ $A=(\mu
_{A},\gamma _{A})$ of an AG-groupoid $S$ in Example \ref{e2} as follows: $%
\mu _{A}(a)=0.8,\mu _{A}(b)=0.5,$ $\mu _{A}(c)=0.4,\mu _{A}(d)=0.3$ $\mu
_{A}(e)=0.6,$ $\gamma _{A}(a)=0.1,$ $\gamma _{A}(b)=0.7,$ $\gamma
_{A}(c)=0.6,$ $\gamma _{A}(d)=0.8$ and $\gamma _{A}(e)=0.3,$ then it is easy
to observe that $A=(\mu _{A},\gamma _{A})$ is an intuitionistic fuzzy left
ideal of $S$ but it is not an intuitionistic fuzzy right ideal of $S,$
because $\mu _{A}(bd)\ngeq \mu _{A}(b)$ and $\gamma _{A}(cd)\ngeq \gamma
_{A}(c).$

\begin{corollary}
\label{bg}Every intuitionistic fuzzy right ideal of a regular AG-groupoid $S$
with left identity is an intuitionistic fuzzy left ideal of $S.$
\end{corollary}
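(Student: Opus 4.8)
The plan is to recognize that this corollary is an immediate specialization of Lemma \ref{qer} rather than a statement demanding a fresh argument. Lemma \ref{qer} already asserts that in \emph{any} AG-groupoid with left identity, every intuitionistic fuzzy right ideal is automatically an intuitionistic fuzzy left ideal; crucially, the proof of that lemma uses only the left invertive law $(1)$ and the existence of a left identity, and it never invokes regularity. Since a regular AG-groupoid with left identity is in particular an AG-groupoid with left identity, the hypotheses of Lemma \ref{qer} are satisfied verbatim.

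Concretely, I would let $A=(\mu_A,\gamma_A)$ be an arbitrary intuitionistic fuzzy right ideal of the regular AG-groupoid $S$ with left identity $e$, and then apply Lemma \ref{qer} directly. For $a,b\in S$ one has $\mu_A(ab)=\mu_A((ea)b)=\mu_A((ba)e)\geq\mu_A(b)$ by the left invertive law together with the right-ideal inequality, and dually $\gamma_A(ab)\leq\gamma_A(b)$. These are precisely the defining conditions for an intuitionistic fuzzy left ideal, so the conclusion follows at once.

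There is essentially no obstacle here: the regularity assumption stated in the corollary is not actually needed for the conclusion, since Lemma \ref{qer} holds under the strictly weaker hypothesis of merely having a left identity. The corollary is therefore a free consequence obtained by restricting the ambient class of AG-groupoids. If one wished to stay within the regular setting for uniformity with the surrounding results, one could instead expand a typical element as $a=(ax)a$ and combine this with the right-ideal property, but this would only reintroduce machinery that Lemma \ref{qer} already disposes of more cheaply, so I would not pursue that route.
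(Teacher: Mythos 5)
Your proposal is correct and matches the paper exactly: the paper states this corollary without proof precisely because it is the immediate specialization of Lemma \ref{qer} (whose argument uses only the left identity and the left invertive law, never regularity) to the regular case. Your observation that the regularity hypothesis is superfluous is accurate.
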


To consider the converse of Corollary \ref{bg}, we need to strengthen the
condition of a regular AG-groupoid $S$ which is given in the following.

An AG-groupoid $S$ is called a left duo if every left ideal of $S$ is a
two-sided ideal of $S.$

\begin{lemma}
\label{ll}Let $S$ be a regular AG-groupoid such that $S$ is a left duo, then
every intuitionistic fuzzy left ideal of $S$ is an intuitionistic fuzzy
right ideal of $S$.
\end{lemma}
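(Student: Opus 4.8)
The plan is to argue one threshold at a time, converting the fuzzy hypothesis into a statement about crisp left ideals and then invoking the left-duo property. Let $A=(\mu_A,\gamma_A)$ be an intuitionistic fuzzy left ideal of $S$ and fix arbitrary $a,b\in S$; the goal is to verify the two defining inequalities of an intuitionistic fuzzy right ideal, namely $\mu_A(ab)\geq \mu_A(a)$ and $\gamma_A(ab)\leq \gamma_A(a)$.

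For the membership part I would set $t=\mu_A(a)$ and consider the upper level set $U=\{z\in S:\mu_A(z)\geq t\}$. First I would check that $U$ is a (crisp) left ideal: it is non-empty since $a\in U$, and if $z\in U$ and $w\in S$ then the left-ideal inequality $\mu_A(wz)\geq \mu_A(z)\geq t$ gives $wz\in U$, so $SU\subseteq U$. Because $S$ is a left duo, the left ideal $U$ is automatically two-sided, hence in particular a right ideal, so $US\subseteq U$. Since $a\in U$, it follows that $ab\in US\subseteq U$, which is exactly $\mu_A(ab)\geq t=\mu_A(a)$.

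For the non-membership part I would run the symmetric argument with a lower level set. Put $s=\gamma_A(a)$ and $L=\{z\in S:\gamma_A(z)\leq s\}$. Again $L$ is non-empty (it contains $a$) and is a left ideal, since $z\in L$ and any $w\in S$ give $\gamma_A(wz)\leq \gamma_A(z)\leq s$, so $wz\in L$ and $SL\subseteq L$. The left-duo hypothesis upgrades $L$ to a two-sided ideal, so $LS\subseteq L$, and then $ab\in LS\subseteq L$ yields $\gamma_A(ab)\leq s=\gamma_A(a)$. Combining the two inequalities shows that $A=(\mu_A,\gamma_A)$ satisfies the definition of an intuitionistic fuzzy right ideal of $S$.

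The only point that requires care is the translation step: one must be sure the level sets really are left ideals in the precise sense demanded by the left-duo definition (non-empty subsets closed under left multiplication by $S$), and that the word ``two-sided'' in that definition delivers the right-sided inclusion $US\subseteq U$ rather than merely the left one. I expect this bookkeeping to be the main, and essentially the only, obstacle. It is worth noting that regularity of $S$ is not actually used in this direction, so it serves here only as a standing hypothesis carried over from the surrounding results.
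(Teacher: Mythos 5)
Your proof is correct, but it takes a genuinely different route from the paper's. The paper works with the single concrete left ideal $Sx$: using regularity it writes $x=(xz)x$ and chases elements through the left invertive law to show $xy\in (Sx)S$, then invokes the left-duo hypothesis to get $(Sx)S\subseteq Sx$, so that $xy=wx$ for some $w\in S$ and the fuzzy left-ideal inequality $\mu _{A}(wx)\geq \mu _{A}(x)$ finishes the job. You instead pass to the level sets $U=\{z:\mu _{A}(z)\geq \mu _{A}(a)\}$ and $L=\{z:\gamma _{A}(z)\leq \gamma _{A}(a)\}$, verify that these are crisp left ideals (which is immediate from the fuzzy left-ideal inequalities), and apply the left-duo hypothesis to them directly; this is sound, since the paper's ``left duo'' is indeed a statement about crisp left ideals. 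Your argument buys two things: it avoids the element-level computation with identities $(1)$--$(4)$ entirely, and it exposes that regularity is never needed --- the conclusion holds for any left-duo AG-groupoid, which is a genuine (if mild) strengthening of the stated lemma. What the paper's argument buys in exchange is an explicit factorization $xy=wx$, which is reused implicitly in the style of the surrounding results, and it stays within the single-ideal, computational idiom of the rest of the paper rather than introducing the level-set transfer principle.
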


\begin{proof}
Let $S$ be a left duo regular AG-groupoid and let $A=(\mu _{A},\gamma _{A})$
be an intuitionistic fuzzy left ideal of $S$. Let $x,y\in S$ then the left
ideal $Sx$ of $S$ is a two sided ideal of $S$ and since $S$ is regular
therefore by using $(1)$, we have 
\begin{equation*}
xy\in ((xS)x)y\subseteq ((xS)((xS)x))S\subseteq ((((xS)x)S)x)S\subseteq
(Sx)S\subseteq S.
\end{equation*}

It follows that there exists $w\in S$ such that $xy=wx.$ As $A=(\mu
_{A},\gamma _{A})$ is an intuitionistic fuzzy left ideal of $S$, therefore
we get $\mu _{A}(xy)=\mu _{A}(wx)\geq \mu _{A}(x)$ and $\mu _{A}(xy)=\gamma
_{A}(wx)\leq \gamma _{A}(x).$ This means that $A=(\mu _{A},\gamma _{A})$ is
an intuitionistic fuzzy right ideal of $S.$
\end{proof}

An AG-groupoid $S$ is called an intuitionistic fuzzy left duo if every
intuitionistic fuzzy left ideal of $S$ is an intuitionistic fuzzy two-sided
ideal of $S$.

\begin{corollary}
\label{dh}Let $S$ be a regular AG-groupoid. If $S$ is a left duo, then $S$
is an intuitionistic fuzzy left duo.
\end{corollary}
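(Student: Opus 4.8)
The plan is to read the corollary off directly from Lemma \ref{ll}, since the two hypotheses assumed here are precisely those of that lemma. First I would fix an arbitrary intuitionistic fuzzy left ideal $A=(\mu _A,\gamma _A)$ of $S$. The goal is to show that this $A$ is in fact an intuitionistic fuzzy two-sided ideal of $S$; once that is established for an arbitrary such $A$, every intuitionistic fuzzy left ideal of $S$ is two-sided, which is exactly the definition of $S$ being an intuitionistic fuzzy left duo.

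The single substantive step is to invoke Lemma \ref{ll}. Because $S$ is a regular AG-groupoid that is also left duo, the lemma guarantees that $A$, being an intuitionistic fuzzy left ideal, is simultaneously an intuitionistic fuzzy right ideal of $S$. I would then combine this with the standing assumption that $A$ is already a left ideal: an intuitionistic fuzzy two-sided ideal is by definition one that is at once an intuitionistic fuzzy left ideal and an intuitionistic fuzzy right ideal, so $A$ meets both requirements and is therefore two-sided. As $A$ was chosen arbitrarily, every intuitionistic fuzzy left ideal of $S$ is an intuitionistic fuzzy two-sided ideal, and the corollary follows.

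I expect no genuine obstacle here, since all of the real work has already been carried out in Lemma \ref{ll}; what remains is only the unwinding of the term \emph{two-sided} into its constituent conditions \emph{left} and \emph{right}. The one point worth checking is that the notion of \emph{left duo} invoked in Lemma \ref{ll}, namely that every (crisp) left ideal of $S$ is a two-sided ideal, is identical to the hypothesis assumed in this corollary, which it is verbatim; hence Lemma \ref{ll} applies without any adjustment and the argument is a short deduction rather than a fresh calculation.
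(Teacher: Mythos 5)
Your proposal is correct and is exactly the intended derivation: the paper states this as an immediate corollary of Lemma \ref{ll}, and your argument (an arbitrary intuitionistic fuzzy left ideal is a right ideal by that lemma, hence two-sided, hence $S$ is intuitionistic fuzzy left duo) is the same one-step unwinding of the definition.
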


\begin{theorem}
\label{c1}If $A=(\mu _{A},\gamma _{A})$ is an intuitionistic fuzzy two-sided
ideal of a regular AG-groupoid $S$ with left identity, then $A(ab)=A(ba)$
holds for all $a,b$ in $S$.
\end{theorem}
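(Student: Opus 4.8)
The plan is to prove the two scalar equalities $\mu_A(ab)=\mu_A(ba)$ and $\gamma_A(ab)=\gamma_A(ba)$, and for each of them to obtain the two opposite inequalities by one argument run twice with the roles of $a$ and $b$ exchanged. Thus it suffices to establish $\mu_A(ab)\geq\mu_A(ba)$ and $\gamma_A(ab)\leq\gamma_A(ba)$; interchanging $a$ and $b$ then yields $\mu_A(ba)\geq\mu_A(ab)$ and $\gamma_A(ba)\leq\gamma_A(ab)$, and combining gives equality in both components, i.e.\ $A(ab)=A(ba)$.

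The central step is to rewrite the product $ab$ by combining regularity with the left invertive law $(1)$. Since $S$ is regular, I would pick $x\in S$ with $a=(ax)a$, substitute this into $ab$, and apply $(1)$ to the outer bracketing, obtaining
\begin{equation*}
ab=((ax)a)b=(ba)(ax).
\end{equation*}
The point of this rearrangement is that $ab$ is now displayed as a product whose \emph{left} factor is exactly $ba$.

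At this stage I would invoke the hypothesis that $A=(\mu_A,\gamma_A)$ is an intuitionistic fuzzy two-sided ideal, and use in particular that it is a right ideal, so that $\mu_A(uv)\geq\mu_A(u)$ and $\gamma_A(uv)\leq\gamma_A(u)$ for all $u,v\in S$. Taking $u=ba$ and $v=ax$ in the displayed identity gives at once $\mu_A(ab)\geq\mu_A(ba)$ and $\gamma_A(ab)\leq\gamma_A(ba)$. Running the same computation with $b$ in place of $a$, i.e.\ choosing $y\in S$ with $b=(by)b$ and writing $ba=((by)b)a=(ab)(by)$ by the same use of $(1)$, yields the reverse inequalities, and the theorem follows.

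The only genuine obstacle is spotting the correct rearrangement $((ax)a)b=(ba)(ax)$; once $ab$ has been massaged into a product with $ba$ as its left factor, the right-ideal property of $A$ closes the argument immediately. It is worth noting that this route needs only regularity and the law $(1)$, and makes no appeal to the medial or paramedial laws.
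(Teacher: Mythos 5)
Your proof is correct, but it takes a genuinely different and in fact leaner route than the paper's. The paper substitutes \emph{both} regular decompositions $a=(ax)a$ and $b=(by)b$ into $ab$ and then rewrites $((ax)a)((by)b)$ first by the medial law $(2)$ and then by the paramedial law $(3)$ to reach $(ba)((by)(ax))$, from which the right-ideal property gives $\mu _{A}(ab)\geq \mu _{A}(ba)$; the reverse inequality is obtained by the mirror-image chain starting from $ba$. You instead substitute only $a=(ax)a$ and apply the left invertive law $(1)$ once, $((ax)a)b=(ba)(ax)$, which already exhibits $ba$ as the left factor, and then close with the same right-ideal estimate; the opposite inequality follows by exchanging the roles of $a$ and $b$. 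Both arguments use only the right-ideal half of the hypothesis (consistent with Corollary \ref{jki}). Your version buys two things: it is shorter, and since it avoids the paramedial law $(3)$ — which in this paper is only asserted for AG-groupoids \emph{with} left identity — it actually establishes the conclusion for any regular AG-groupoid, with no left identity needed. The one cosmetic caveat is that the displayed identity $ab=((ax)a)b=(ba)(ax)$ is an equality of elements, while the supremum-free step $\mu _{A}(ab)=\mu _{A}(((ax)a)b)$ is just evaluation of $\mu _{A}$ at the same element, so no issue arises; the argument is complete as stated.
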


\begin{proof}
Let $A=(\mu _{A},\gamma _{A})$ be an intuitionistic fuzzy two-sided ideal of
a regular AG-groupoid $S$ with left identity and let $a,b\in S$, then $%
a=(ax)a$ and $b=(by)b$ for some $x,y\in S.$ Now by using $(2)$ and $(3),$ we
have%
\begin{eqnarray*}
\mu _{A}(ab) &=&\mu _{A}(((ax)a)((by)b))=\mu _{A}(((ax)(by))(ab))=\mu
_{A}((ba)((by)(ax))) \\
&\geq &\mu _{A}(ba)=\mu _{A}(((by)b)((ax)a))=\mu _{A}(((by)(ax))(ba)) \\
&=&\mu _{A}((ab)((ax)(by)))\geq \mu _{A}(ab).
\end{eqnarray*}

Which shows that $\mu _{A}(ab)=\mu _{A}(ba)$ holds for all $a,b$ in $S$ and
similarly $\gamma _{A}(ab)=\gamma _{A}(ba)$ holds for all $a,b$ in $S.$ Thus 
$A(ab)=A(ba)$ holds for all $a,b$ in $S.$
\end{proof}

The converse is not true in general. For this consider an $IFS$ $A=(\mu
_{A},\gamma _{A})$ of a regular AG-groupoid $S$ considered in Example \ref%
{ex} as follows: $\mu _{A}(a)=0.1,$ $\mu _{A}(b)=0.2,$ $\mu _{A}(c)=0.6,$ $%
\mu _{A}(d)=0.4,$ $\mu _{A}(e)=0.6,$ $\gamma _{A}(a)=0.2,$ $\gamma
_{A}(b)=0.3,$ $\gamma _{A}(c)=0.7,$ $\gamma _{A}(d)=0.5,$ $\gamma
_{A}(e)=0.7,$ then it is easy to observe that $A(ab)=A(ba)$ holds for all $a$
and $b$ in $S$ but $A=(\mu _{A},\gamma _{A})$ is not an intuitionistic fuzzy
two-sided ideal of $S,$ because $\mu _{A}(cc)\ngeqslant \mu _{A}(c)$ and $%
\gamma _{A}(ed)\nleqslant \gamma (d)$ $(\gamma _{A}(de)\nleqslant \gamma
(d)).$

\begin{corollary}
\label{jki}If $A=(\mu _{A},\gamma _{A})$ is an intuitionistic fuzzy right
ideal of a regular AG-groupoid $S$ with left identity, then $A(ab)=A(ba)$
holds for all $a,b$ in $S$.
\end{corollary}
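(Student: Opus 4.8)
The plan is to deduce Corollary~\ref{jki} from Theorem~\ref{c1} with essentially no extra work, by showing that the hypothesis of the corollary forces the hypothesis of the theorem. The statement to prove is that if $A=(\mu_A,\gamma_A)$ is an intuitionistic fuzzy right ideal of a regular AG-groupoid $S$ with left identity, then $A(ab)=A(ba)$ for all $a,b\in S$. Since Theorem~\ref{c1} already gives this conclusion for intuitionistic fuzzy \emph{two-sided} ideals, it suffices to upgrade a right ideal to a two-sided ideal under the standing hypotheses.

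First I would invoke Corollary~\ref{bg}, which states that every intuitionistic fuzzy right ideal of a regular AG-groupoid $S$ with left identity is an intuitionistic fuzzy left ideal of $S$. Thus $A$, being a right ideal by assumption, is simultaneously a left ideal; by the definition of an intuitionistic fuzzy two-sided ideal given in the Preliminaries (an $IFS$ that is both a left and a right ideal), $A$ is therefore an intuitionistic fuzzy two-sided ideal of $S$. Having established this, I would apply Theorem~\ref{c1} directly to $A$, which immediately yields $\mu_A(ab)=\mu_A(ba)$ and $\gamma_A(ab)=\gamma_A(ba)$ for all $a,b\in S$, that is, $A(ab)=A(ba)$.

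There is essentially no obstacle here: the entire argument is a two-line citation chain, namely Corollary~\ref{bg} followed by Theorem~\ref{c1}. The only point requiring any care is to confirm that the hypotheses line up exactly---both Corollary~\ref{bg} and Theorem~\ref{c1} require $S$ to be a regular AG-groupoid with left identity, which is precisely what the corollary assumes---so no additional regularity or identity conditions need to be checked. Consequently the proof I would write is simply: by Corollary~\ref{bg}, $A$ is an intuitionistic fuzzy left ideal and hence an intuitionistic fuzzy two-sided ideal of $S$; then Theorem~\ref{c1} gives $A(ab)=A(ba)$ for all $a,b\in S$, completing the proof.
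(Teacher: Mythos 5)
Your proof is correct and is exactly the derivation the paper intends: the corollary is stated without proof immediately after Theorem~\ref{c1}, and the evident route is Corollary~\ref{bg} (equivalently Lemma~\ref{qer}) to upgrade the right ideal to a two-sided ideal, followed by Theorem~\ref{c1}. The hypotheses align as you checked, so nothing further is needed.
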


The converse of Corollary \ref{jki} is not true in general which can be
followed from the converse of Theorem \ref{c1}.

\begin{theorem}
\label{ki}Let $S$ be a regular AG-groupoid with left identity, then $A=(\mu
_{A},\gamma _{A})$ is an intuitionistic fuzzy left ideal of $S$ if and only
if $A=(\mu _{A},\gamma _{A})$ is an intuitionistic fuzzy bi-ideal of $S.$
\end{theorem}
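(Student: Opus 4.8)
The plan is to prove the two implications separately, noting that the forward implication is purely formal while the converse is where regularity and the left identity do the work. I would phrase the converse in terms of the convolution product and the $IFS$ $\delta =(S_{\delta },\Theta _{\delta })$, so that by Lemma \ref{as} it reduces to a short computation with $\circ $.

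For the direction ``left ideal $\Rightarrow $ bi-ideal'' I would argue straight from the definitions, using no hypothesis beyond $A=(\mu _{A},\gamma _{A})$ being an intuitionistic fuzzy left ideal. From $\mu _{A}(xy)\geq \mu _{A}(y)$ one gets $\mu _{A}(xy)\geq \mu _{A}(x)\wedge \mu _{A}(y)$, so $A$ is an intuitionistic fuzzy AG-subgroupoid; and applying the left-ideal inequality to the product $(xa)\cdot y$ gives $\mu _{A}((xa)y)\geq \mu _{A}(y)\geq \mu _{A}(x)\wedge \mu _{A}(y)$. The same two lines with $\leq ,\vee $ in place of $\geq ,\wedge $ dispose of $\gamma _{A}$, so $A$ is an intuitionistic fuzzy bi-ideal. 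This half needs neither regularity nor the left identity.

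For the converse I would first record the ``regular representation'' $A=(A\circ \delta )\circ A$. The inclusion $(A\circ \delta )\circ A\subseteq A$ is exactly the bi-ideal condition read off at a factorization $a=(rs)q$, since $\mu _{A}((rs)q)\geq \mu _{A}(r)\wedge \mu _{A}(q)$ (and dually for $\gamma _{A}$). The reverse inclusion $A\subseteq (A\circ \delta )\circ A$ comes from regularity: writing $a=(ax)a$ and factoring $a=(ax)\cdot a$ together with $ax=a\cdot x$ forces $((\mu _{A}\circ S_{\delta })\circ \mu _{A})(a)\geq \mu _{A}(a)$, and dually for $\gamma _{A}$. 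Granting this, the heart of the proof is the computation
\[
\delta \circ A=\delta \circ ((A\circ \delta )\circ A)=(A\circ \delta )\circ (\delta \circ A)=((\delta \circ A)\circ \delta )\circ A,
\]
using law $(4)$ and then the left invertive law $(1)$ for $\circ $, followed by
\[
(\delta \circ A)\circ \delta =(\delta \circ A)\circ (\delta \circ \delta )=(\delta \circ \delta )\circ (A\circ \delta )=\delta \circ (A\circ \delta )=A\circ \delta ,
\]
which uses $\delta \circ \delta =\delta $, the medial law $(2)$, and law $(4)$. Substituting the second chain into the first gives $\delta \circ A=(A\circ \delta )\circ A=A$, so in particular $\delta \circ A\subseteq A$, and Lemma \ref{as} then says exactly that $A$ is an intuitionistic fuzzy left ideal.

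I expect the load-bearing facts to be the ones about the convolution rather than any single element identity: namely that the intuitionistic fuzzy subsets of $S$ again form an AG-groupoid with left identity $\delta $, so that laws $(1),(2),(4)$ and the collapse $\delta \circ \delta =\delta $ (which is precisely where $S=S^{2}$, hence the left identity, enters) are available for $\circ $, and that every such identity be verified componentwise with all inclusions reversed for the non-membership part (so $\subseteq $ for $\mu _{A}$ becomes $\supseteq $ for $\gamma _{A}$, and $\wedge $ becomes $\vee $). A purely element-wise attack, trying to rewrite $xy$ as $(y\,t)\,y$ so as to apply the bi-ideal inequality with both outer arguments equal to $y$, appears to stall: manipulations via $(1)$--$(4)$ and $y=(yv)y$ keep collapsing back to forms such as $((xy)v)y$, where the bi-ideal bound is vacuous. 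This is exactly why I would route the argument through the global identity $A=(A\circ \delta )\circ A$ instead.
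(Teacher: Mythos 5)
Your proof is correct, but it takes a genuinely different route from the paper's. For the converse the paper argues element by element: writing $y=(yz)y$ and using $(4)$, $(2)$, $(1)$ and $(3)$ it rewrites $xy=x((yz)y)=(yz)(xy)=(yx)(zy)=((zy)x)y=((zy)(ex))y=((xe)(yz))y=(y((xe)z))y$, which has exactly the shape $(y\,t)\,y$ with $t=(xe)z$, so the bi-ideal inequality yields $\mu _{A}(xy)\geq \mu _{A}(y)\wedge \mu _{A}(y)=\mu _{A}(y)$. So your closing remark that the element-wise attack ``stalls'' is mistaken --- the left identity is precisely what unsticks it, via the insertion $x=ex$ followed by the paramedial law. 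Your alternative, establishing $A=(A\circ \delta )\circ A$ from the bi-ideal condition together with regularity and then computing $\delta \circ A=((\delta \circ A)\circ \delta )\circ A=(A\circ \delta )\circ A=A$ inside the convolution algebra, is sound: both inclusions giving $A=(A\circ \delta )\circ A$ check out, $\delta \circ \delta =\delta $ follows from $S=S^{2}$, and each rewriting step is a balanced identity of $S$ (laws $(1)$, $(2)$, $(4)$) which does lift to $\circ $ --- a fact the paper itself uses silently elsewhere (for instance in the semilattice theorem and in the $A=(S\circ A)^{2}$ characterization), though it is not among the quoted lemmas and deserves a line of justification in your write-up. What your route buys is uniformity: no bespoke element juggling, and the same global computation covers $\mu _{A}$ and $\gamma _{A}$ simultaneously. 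What the paper's route buys is self-containedness, needing nothing beyond the four element-wise laws. Your forward direction also quietly repairs a small omission: the definition of an intuitionistic fuzzy bi-ideal requires $A$ to be an intuitionistic fuzzy AG-subgroupoid, which the paper's proof never verifies but which, as you note, is immediate from the left ideal property.
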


\begin{proof}
Let $A=(\mu _{A},\gamma _{A})$ be an intuitionistic fuzzy left ideal of a
regular AG-groupoid $S$ and let $w,x,y\in S,$ then by using $(1)$,we have%
\begin{equation*}
\mu _{A}((xw)y)=\mu _{A}(((yw)x))\geq \mu _{A}(x)\geq \mu _{A}(x)\wedge \mu
_{A}(y)
\end{equation*}

and%
\begin{equation*}
\gamma _{A}((xw)y)=\gamma _{A}(((yw)x))\leq \mu _{A}(x)\leq \mu _{A}(x)\vee
\mu _{A}(y).
\end{equation*}

Thus $A=(\mu _{A},\gamma _{A})$ is an intuitionistic fuzzy bi-ideal of $S.$

Conversely, let $A=(\mu _{A},\gamma _{A})$ be an intuitionistic fuzzy
bi-ideal of $S$ and let $x,y\in S,$ then there exists $z\in S$ such that $%
y=(yz)y.$ Now by using $(4),(2),$ $(1)$ and $(3),$ we have 
\begin{eqnarray*}
\mu _{A}(xy) &=&\mu _{A}(x((yz)y))=\mu _{A}((yz)(xy))=\mu _{A}((yx)(zy))=\mu
_{A}(((zy)x)y) \\
&=&\mu _{A}(((zy)(ex))y)=\mu _{A}(((xe)(yz))y)=\mu _{A}((y((xe)z))y)\geq \mu
_{A}(y).
\end{eqnarray*}

Similarly $\gamma _{A}(xy)\leq \gamma _{A}(y)$ and therefore $A=(\mu
_{A},\gamma _{A})$ is an intuitionistic fuzzy left ideal of $S.$
\end{proof}

\begin{corollary}
Let $S$ be a regular AG-groupoid with left identity, then $A=(\mu
_{A},\gamma _{A})$ is an intuitionistic fuzzy left ideal of $S$ if and only
if $A=(\mu _{A},\gamma _{A})$ is an intuitionistic fuzzy generalized
bi-ideal of $S.$
\end{corollary}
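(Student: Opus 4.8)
The plan is to derive this corollary directly from Theorem \ref{ki} by exploiting the definitional relationship between intuitionistic fuzzy bi-ideals and intuitionistic fuzzy generalized bi-ideals: every intuitionistic fuzzy bi-ideal is, by definition, an intuitionistic fuzzy generalized bi-ideal, since it satisfies the very same inequalities $\mu_A((xa)y) \geq \mu_A(x) \wedge \mu_A(y)$ and $\gamma_A((xa)y) \leq \gamma_A(x) \vee \gamma_A(y)$ but is in addition required to be an AG-subgroupoid. Thus the generalized bi-ideal condition is strictly weaker, and the whole corollary should fall out of Theorem \ref{ki} with almost no extra work.

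For the forward implication I would start from an intuitionistic fuzzy left ideal $A = (\mu_A, \gamma_A)$. By Theorem \ref{ki} it is an intuitionistic fuzzy bi-ideal, and hence immediately an intuitionistic fuzzy generalized bi-ideal, because dropping the AG-subgroupoid requirement only weakens the conclusion. So this direction is essentially free and requires no new computation.

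The converse is where the actual content sits, but I expect the argument from the converse half of Theorem \ref{ki} to transfer almost verbatim. Starting from an intuitionistic fuzzy generalized bi-ideal $A$ and arbitrary $x, y \in S$, regularity of $S$ supplies $z \in S$ with $y = (yz)y$. I would then run the identical chain of rewritings using the laws $(4)$, $(2)$, $(1)$ and $(3)$ to bring $xy$ into the form $(y((xe)z))y$, that is, into the shape $(y \cdot a)y$ for a suitable $a \in S$. The crucial observation is that the terminal inequality $\mu_A((y((xe)z))y) \geq \mu_A(y)$ invokes the generalized bi-ideal inequality with both outer arguments equal to $y$, so that $\mu_A(y) \wedge \mu_A(y)$ collapses to $\mu_A(y)$; at no point is the AG-subgroupoid closure property used. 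The same manipulation gives $\mu_A(xy) \geq \mu_A(y)$, and by the dual computation $\gamma_A(xy) \leq \gamma_A(y)$, whence $A$ is an intuitionistic fuzzy left ideal of $S$.

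The main (and really only) point to check is that the converse proof of Theorem \ref{ki} genuinely never appeals to the AG-subgroupoid condition of a bi-ideal, but only to the generalized bi-ideal inequalities; once this is verified the corollary is immediate, which is presumably exactly why it is recorded as a corollary rather than proved independently.
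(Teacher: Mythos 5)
Your proposal is correct and matches the route the paper intends: the corollary is stated without proof precisely because the forward direction follows from Theorem \ref{ki} together with the fact that every intuitionistic fuzzy bi-ideal is a generalized bi-ideal, and the converse chain of rewritings in Theorem \ref{ki} ending in $\mu _{A}((y((xe)z))y)\geq \mu _{A}(y)$ uses only the generalized bi-ideal inequality and never the AG-subgroupoid condition. Your key verification---that the subgroupoid closure is nowhere invoked in that computation---is exactly the point, so nothing further is needed.
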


\begin{theorem}
Let $S$ be a regular AG-groupoid with left identity such that $S$ is a left
duo, then $A=(\mu _{A},\gamma _{A})$ is an intuitionistic fuzzy right ideal
of $S$ if and only if $A=(\mu _{A},\gamma _{A})$ is an intuitionistic fuzzy
bi-ideal of $S.$
\end{theorem}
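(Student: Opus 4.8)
The plan is to prove both implications purely by chaining the three equivalences and implications already established earlier, since the hypotheses of this theorem (regular, with left identity, and left duo) are exactly the combined hypotheses of those results. I would not attempt any fresh element-level computation; the work has effectively been done in Lemma \ref{qer}, Lemma \ref{ll}, and Theorem \ref{ki}, and the task is to route the argument through them correctly.

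For the forward direction, suppose $A=(\mu _{A},\gamma _{A})$ is an intuitionistic fuzzy right ideal of $S$. First I would invoke Lemma \ref{qer}: because $S$ has a left identity, every intuitionistic fuzzy right ideal is an intuitionistic fuzzy left ideal, so $A$ is an intuitionistic fuzzy left ideal of $S$. Then, since $S$ is a regular AG-groupoid with left identity, Theorem \ref{ki} applies and tells us that an intuitionistic fuzzy left ideal is an intuitionistic fuzzy bi-ideal. Hence $A$ is an intuitionistic fuzzy bi-ideal of $S$, as required.

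For the converse, suppose $A=(\mu _{A},\gamma _{A})$ is an intuitionistic fuzzy bi-ideal of $S$. Again using that $S$ is regular with left identity, Theorem \ref{ki} gives the reverse equivalence, so $A$ is an intuitionistic fuzzy left ideal of $S$. Now I would apply Lemma \ref{ll}: since $S$ is a regular left duo AG-groupoid, every intuitionistic fuzzy left ideal of $S$ is an intuitionistic fuzzy right ideal of $S$. Therefore $A$ is an intuitionistic fuzzy right ideal, completing the equivalence.

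The proof is essentially an assembly argument rather than a calculation, so there is no genuinely hard computational obstacle. The only point requiring care is bookkeeping of hypotheses: I must check that the left-identity assumption is what powers Lemma \ref{qer} and Theorem \ref{ki}, while the left-duo assumption together with regularity is precisely what powers Lemma \ref{ll}, so that each cited result is legitimately applicable at the step where it is used. Once that is verified, both chains close and the two conditions coincide.
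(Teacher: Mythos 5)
Your proof is correct and follows essentially the same route as the paper, which simply states that the result follows from Theorem \ref{ki} and Lemma \ref{ll}. Your explicit invocation of Lemma \ref{qer} to convert the right ideal into a left ideal in the forward direction is in fact a necessary step that the paper leaves implicit, so your version is the more complete bookkeeping of the same assembly argument.
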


\begin{proof}
It follows from Theorem \ref{ki} and Lemma \ref{ll}.
\end{proof}

\begin{corollary}
Let $S$ be a regular AG-groupoid with left identity such that $S$ is a left
duo, then $A=(\mu _{A},\gamma _{A})$ is an intuitionistic fuzzy right ideal
of $S$ if and only if $A=(\mu _{A},\gamma _{A})$ is an intuitionistic fuzzy
generalized bi-ideal of $S.$
\end{corollary}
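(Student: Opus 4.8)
The plan is to deduce this corollary by composing the ``left ideal $\Leftrightarrow$ generalized bi-ideal'' equivalence established just after Theorem \ref{ki} with the ``right ideal $\Leftrightarrow$ left ideal'' equivalence that the left-duo hypothesis provides. In other words, I would reduce the statement to a chain of two-way implications already proved in the excerpt, exactly mirroring the one-line proof of the preceding theorem (which derived the bi-ideal version from Theorem \ref{ki} and Lemma \ref{ll}).

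First I would record the forward direction of the right/left equivalence. By Lemma \ref{qer} (equivalently Corollary \ref{bg}), every intuitionistic fuzzy right ideal of an AG-groupoid with left identity is an intuitionistic fuzzy left ideal; the left-identity hypothesis is exactly what is needed here. Next I would record the reverse direction: since $S$ is a regular left-duo AG-groupoid, Lemma \ref{ll} guarantees that every intuitionistic fuzzy left ideal of $S$ is an intuitionistic fuzzy right ideal. Combining these two, under the present hypotheses an $IFS$ $A=(\mu _{A},\gamma _{A})$ is an intuitionistic fuzzy right ideal of $S$ if and only if it is an intuitionistic fuzzy left ideal of $S$.

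Finally I would invoke the corollary stated immediately after Theorem \ref{ki}: because $S$ is regular with left identity, $A$ is an intuitionistic fuzzy left ideal if and only if it is an intuitionistic fuzzy generalized bi-ideal. Chaining this with the right/left equivalence of the previous step yields: $A$ is a right ideal $\Leftrightarrow$ $A$ is a left ideal $\Leftrightarrow$ $A$ is a generalized bi-ideal, which is precisely the claim.

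There is essentially no computational obstacle here, since every ingredient is a previously established lemma or corollary; the only point requiring care is the bookkeeping of hypotheses. Specifically, I must confirm that the stronger assumption ``regular with left identity and left duo'' legitimately feeds each cited result: the left identity is used in Lemma \ref{qer}/Corollary \ref{bg} and in the left/generalized-bi-ideal corollary, while regularity together with the left-duo property is used in Lemma \ref{ll}. Since all of these assumptions are implied by the present ones, the chaining is valid and no genuinely new argument is required.
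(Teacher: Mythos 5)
Your proposal is correct and matches the paper's intended argument: the paper leaves this corollary without an explicit proof, but the immediately preceding theorem (the bi-ideal version) is proved by exactly the same chaining of Theorem \ref{ki} with Lemma \ref{ll}, and your use of Lemma \ref{qer} together with Lemma \ref{ll} for the right/left equivalence, followed by the generalized-bi-ideal corollary of Theorem \ref{ki}, is the same route. Your bookkeeping of which hypotheses feed which cited result is accurate, so no further argument is needed.
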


\begin{theorem}
\label{asdf}Let $S$ be a regular AG-groupoid with left identity, then $%
A=(\mu _{A},\gamma _{A})$ is an intuitionistic fuzzy $(1,2)$-ideal of $S$ if 
$A=(\mu _{A},\gamma _{A})$ is an intuitionistic fuzzy left ideal of $S.$
\end{theorem}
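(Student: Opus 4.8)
The plan is to verify the two defining conditions of an intuitionistic fuzzy $(1,2)$-ideal directly from the left-ideal inequalities, and to note along the way that neither regularity nor the left identity is actually needed.

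First I would dispose of the AG-subgroupoid requirement that is built into the definition of a $(1,2)$-ideal. This is immediate: since $A=(\mu_A,\gamma_A)$ is an intuitionistic fuzzy left ideal, $\mu_A(xy)\geq\mu_A(y)\geq\mu_A(x)\wedge\mu_A(y)$ and $\gamma_A(xy)\leq\gamma_A(y)\leq\gamma_A(x)\vee\gamma_A(y)$ for all $x,y\in S$, so $A$ is an intuitionistic fuzzy AG-subgroupoid.

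For the $(1,2)$-condition I would fix arbitrary $x,w,y,z\in S$ and apply the left-ideal inequality twice. Applied to the product $(xw)(yz)$ it bounds $\mu_A$ below by the value on the right factor, $\mu_A((xw)(yz))\geq\mu_A(yz)$; applied again to $yz$ it gives $\mu_A(yz)\geq\mu_A(z)$. Chaining these yields
\begin{equation*}
\mu_A((xw)(yz))\geq\mu_A(z)\geq\mu_A(x)\wedge\mu_A(y)\wedge\mu_A(z),
\end{equation*}
which is exactly the membership inequality required. The same two applications to the nonmembership function give $\gamma_A((xw)(yz))\leq\gamma_A(yz)\leq\gamma_A(z)\leq\gamma_A(x)\vee\gamma_A(y)\vee\gamma_A(z)$, completing the verification.

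There is no genuine obstacle here. The only thing to resist is the reflex of rewriting $(xw)(yz)$ through the medial or paramedial law in imitation of the earlier proofs; that machinery is superfluous, since the left-ideal inequality already dominates $\mu_A$ of any product by $\mu_A$ of its right factor, and two successive peelings collapse $(xw)(yz)$ down to $z$, whose value trivially dominates the three-fold meet $\mu_A(x)\wedge\mu_A(y)\wedge\mu_A(z)$. I would also remark that the hypotheses of regularity and of a left identity are never invoked; they appear to be retained only for uniformity with the neighbouring statements.
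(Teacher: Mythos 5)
Your proof is correct, and it is genuinely different from --- and considerably more elementary than --- the one in the paper. You observe that the left-ideal inequality $\mu_A(ab)\geq\mu_A(b)$ applies to \emph{any} product, so two successive applications give $\mu_A((xw)(yz))\geq\mu_A(yz)\geq\mu_A(z)\geq\mu_A(x)\wedge\mu_A(y)\wedge\mu_A(z)$, and dually for $\gamma_A$; the AG-subgroupoid condition is likewise immediate. This is valid, and you are right that neither regularity nor the left identity is used, since the required bound is only by the three-fold meet and a single peeled-off factor already dominates it. The paper takes a much longer road: it uses regularity to write $x=(xa)x$ and $y=(yb)y$ and then grinds through the left invertive, medial and paramedial laws to re-express $(xw)(yz)$ three separate times, once with $x$, once with $y$ and once with $z$ as the terminal right factor, thereby proving the three individual lower bounds $\mu_A((xw)(yz))\geq\mu_A(x)$, $\geq\mu_A(y)$ and $\geq\mu_A(z)$. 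That establishes the strictly stronger conclusion that $\mu_A((xw)(yz))$ dominates the \emph{join} $\mu_A(x)\vee\mu_A(y)\vee\mu_A(z)$, which is where the extra hypotheses earn their keep; but for the statement as actually formulated your two-line argument suffices, and it proves the result for arbitrary AG-groupoids.
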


\begin{proof}
Let $A=(\mu _{A},\gamma _{A})$ be an intuitionistic fuzzy left ideal of a
regular AG-groupoid $S$ and let $w,x,y,z\in S,$ then there exists $a,b\in S$
such that $x=(xa)x$ and $y=(yb)y.$ Now by using $(1)$ and $(4)$, we have%
\begin{eqnarray*}
\mu _{A}((xw)(yz)) &=&\mu _{A}((((xa)x)w)(yz))=\mu _{A}(((wx)(xa))(yz)) \\
&=&\mu _{A}((x((wx)a))(yz))=\mu _{A}(((yz)((wx)a))x)\geq \mu _{A}(x).
\end{eqnarray*}

Now by using $(4),(2)$ and $(1),$ we have%
\begin{eqnarray*}
\mu _{A}((xw)(yz)) &=&\mu _{A}(y((wx)z))=\mu _{A}(((yb)y)((wx)z))=\mu
_{A}(((yb)(wx))(yz)) \\
&=&\mu _{A}(((((yb)y)b)(wx))(yz))=\mu _{A}((((by)(yb))(wx))(yz)) \\
&=&\mu _{A}(((y((by)b))(wx))(yz))=\mu _{A}((((wx)((by)b))y)(yz)) \\
&=&\mu _{A}((((by)((wx)b))y)(yz))=\mu _{A}(((y((wx)b))(yb))(yz)) \\
&=&\mu _{A}((y((y((wx)b))b))(yz))=\mu _{A}(((yz)(y(((wx)b)b)))y)\geq \mu
_{A}(y).
\end{eqnarray*}

Now by using $(3)$ and $(1)$, we have%
\begin{equation*}
\mu _{A}((xw)(yz))=\mu _{A}((zy)(wx))=\mu _{A}(((wx)y)z)\geq \mu _{A}(z).
\end{equation*}

Thus we get, $\mu _{A}((xw)(yz))\geq \mu _{A}(x)\wedge \mu _{A}(y)\wedge \mu
_{A}(z)$ and similarly $\gamma _{A}((xw)(yz))\leq \gamma _{A}(x)\vee \gamma
_{A}(y)\vee \gamma _{A}(z).$ Thus $A=(\mu _{A},\gamma _{A})$ is an
intuitionistic fuzzy $(1,2)$-ideal of $S.$
\end{proof}

For the converse of Theorem \ref{asdf}, we have to strengthen the condition
of a regular AG-groupoid which is given in the following.

An AG-groupoid $S$ is called an AG-band if $a=a^{2}$ for all $a\in S.$

\begin{theorem}
Let $S$ be a regular AG-band, then $A=(\mu _{A},\gamma _{A})$ is an
intuitionistic fuzzy left ideal of $S$ if $A=(\mu _{A},\gamma _{A})$ is an
intuitionistic fuzzy $(1,2)$-ideal of $S.$
\end{theorem}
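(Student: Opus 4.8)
The plan is to fix arbitrary $x,y\in S$ and prove the two defining inequalities of an intuitionistic fuzzy left ideal, namely $\mu_{A}(xy)\geq \mu_{A}(y)$ and $\gamma_{A}(xy)\leq \gamma_{A}(y)$. The only tool available is the defining inequality of a $(1,2)$-ideal, $\mu_{A}((pw)(qz))\geq \mu_{A}(p)\wedge \mu_{A}(q)\wedge \mu_{A}(z)$ together with its $\gamma$-analogue. Since the right-hand side carries three graded factors, the strategy is to rewrite the product $xy$ in the shape $(pw)(qz)$ in which all three graded positions $p,q,z$ are occupied by $y$; then the bound collapses to $\mu_{A}(y)\wedge \mu_{A}(y)\wedge \mu_{A}(y)=\mu_{A}(y)$, which is exactly what is wanted, and dually for $\gamma_{A}$.

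The heart of the argument is therefore the purely algebraic identity $xy=(yx)(yy)$, and this is where the AG-band hypothesis does its work: the idempotency $yy=y$ is what lets me manufacture the extra occurrences of $y$ needed to fill the three graded slots. I would verify the identity by reducing the right-hand side: the medial law $(2)$ gives $(yx)(yy)=(yy)(xy)$; collapsing $yy=y$ by the band property yields $y(xy)$; and a single application of law $(4)$ gives $y(xy)=x(yy)=xy$. Law $(4)$ is legitimate here because the ambient AG-groupoid carries a left identity. It is worth noting that regularity does not seem to enter this short route, so either it is redundant for this direction or one is intended to run a longer rearrangement starting from a regular representation $y=(yt)y$; I would double-check this against the paper's standing convention that AG-bands are taken with a left identity, since law $(4)$ is precisely what makes the slots line up.

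With the identity in hand the theorem follows in one line each: $\mu_{A}(xy)=\mu_{A}((yx)(yy))\geq \mu_{A}(y)\wedge \mu_{A}(y)\wedge \mu_{A}(y)=\mu_{A}(y)$ by the $(1,2)$-ideal condition, and symmetrically $\gamma_{A}(xy)=\gamma_{A}((yx)(yy))\leq \gamma_{A}(y)\vee \gamma_{A}(y)\vee \gamma_{A}(y)=\gamma_{A}(y)$. As $x,y$ were arbitrary, $A=(\mu_{A},\gamma_{A})$ satisfies both left-ideal inequalities and is an intuitionistic fuzzy left ideal of $S$. The main obstacle is entirely the reshaping step: spotting that $(yx)(yy)$ folds back to $xy$ while keeping all of its graded positions equal to $y$. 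Once that is seen, the verification is a routine bracket-chase through the medial law, the band law, and $(4)$, and the only real care needed is to track the bracketings correctly at each application.
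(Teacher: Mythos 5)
The paper offers no proof to compare against --- its entire argument for this theorem is ``It is simple'' --- so your write-up is a genuine filling-in rather than a rederivation, and it is essentially correct. The identity $xy=(yx)(yy)$ does the job exactly as you say: by $(2)$, $(yx)(yy)=(yy)(xy)$, idempotency collapses this to $y(xy)$, and $(4)$ gives $y(xy)=x(yy)=xy$; then the $(1,2)$-condition applied to $(yx)(yy)$ puts $y$ in all three graded slots and yields $\mu _{A}(xy)\geq \mu _{A}(y)$ and $\gamma _{A}(xy)\leq \gamma _{A}(y)$ directly. The one point you rightly flag is the only point of real exposure: the paper defines an AG-band without a left identity and the theorem statement does not mention one, yet law $(4)$ is only available in the presence of a left identity. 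Your instinct that the left identity is the standing hypothesis is well supported by the context --- this theorem is introduced explicitly as the converse of Theorem \ref{asdf}, which is stated for a regular AG-groupoid \emph{with left identity} --- and some such assumption seems genuinely necessary: using only $(1)$, $(2)$ and idempotency one can reach $xy=(yx)x=(yx)(xx)$, which feeds the $(1,2)$-condition the bound $\mu _{A}(y)\wedge \mu _{A}(x)$ rather than $\mu _{A}(y)$, and I see no way to occupy all three graded positions with $y$ without $(4)$. Your observation that regularity is never used is also correct; the hypothesis appears to be redundant for this direction. So modulo making the left-identity assumption explicit, the proof stands.
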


\begin{proof}
It is simple.
\end{proof}

\begin{theorem}
Let $S$ be a regular AG-groupoid with left identity such that $S$ is a left
duo, then $A=(\mu _{A},\gamma _{A})$ is an intuitionistic fuzzy $(1,2)$%
-ideal of $S$ if $A=(\mu _{A},\gamma _{A})$ is an intuitionistic fuzzy right
ideal of $S.$
\end{theorem}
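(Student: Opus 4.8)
The plan is to obtain the conclusion by composing two facts already established, so that no fresh manipulation of the left invertive, medial, or paramedial laws is needed. The key observation is that the hypothesis ``intuitionistic fuzzy right ideal'' can first be upgraded to ``intuitionistic fuzzy left ideal,'' after which the desired $(1,2)$-property drops out immediately from Theorem \ref{asdf}.

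Concretely, I would first invoke Lemma \ref{qer}: since $S$ has a left identity, every intuitionistic fuzzy right ideal $A=(\mu _{A},\gamma _{A})$ of $S$ is automatically an intuitionistic fuzzy left ideal of $S$. This step uses only the left invertive law $(1)$ together with the left identity and is exactly the content of that lemma. Next, because $S$ is in addition regular with left identity, Theorem \ref{asdf} applies and tells us that every intuitionistic fuzzy left ideal of $S$ is an intuitionistic fuzzy $(1,2)$-ideal of $S$. Chaining these two implications yields that $A$ is an intuitionistic fuzzy $(1,2)$-ideal of $S$, as claimed; thus the proof reads simply ``it follows from Lemma \ref{qer} and Theorem \ref{asdf},'' in the same spirit as the parallel bi-ideal theorem proved just above.

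The only point deserving a word of care is the AG-subgroupoid clause hidden in the definition of a $(1,2)$-ideal: one must know that $A$ is an intuitionistic fuzzy AG-subgroupoid before the $(1,2)$-inequality is even meaningful. This comes for free, however, because a left ideal already satisfies $\mu _{A}(xy)\geq \mu _{A}(y)\geq \mu _{A}(x)\wedge \mu _{A}(y)$ and dually $\gamma _{A}(xy)\leq \gamma _{A}(y)\leq \gamma _{A}(x)\vee \gamma _{A}(y)$, so the AG-subgroupoid condition is automatic once the left-ideal property is in hand. Consequently I expect no genuine obstacle here; the substantive work was already done in proving Lemma \ref{qer} and Theorem \ref{asdf}. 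I would further note that the left duo hypothesis appearing in the statement is not actually used by this argument and is merely inherited from the analogous bi-ideal formulation. If instead a self-contained direct proof were wanted, the hardest (and essentially only) step would be re-deriving the left-ideal inequality from the right-ideal inequality, i.e.\ reproducing the single application of $(1)$ carried out in Lemma \ref{qer}, and then running the computation of Theorem \ref{asdf} verbatim.
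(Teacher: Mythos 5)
Your argument is correct, and at the top level it is the same two-step reduction the paper has in mind: convert the right-ideal hypothesis into a left-ideal one and then invoke Theorem \ref{asdf}. The one substantive difference is which conversion lemma carries the first step. The paper's proof reads ``It is an easy consequence of Theorem \ref{asdf} and Lemma \ref{ll}''; but Lemma \ref{ll} runs in the direction left ideal $\Rightarrow$ right ideal (and is what the left duo hypothesis is for), which is not the implication needed here. Your choice of Lemma \ref{qer} (right ideal $\Rightarrow$ left ideal, using only the left identity and the left invertive law) is the logically correct bridge for the stated implication, and it justifies your observation that the left duo hypothesis is never used --- it is indeed inherited from the biconditional bi-ideal theorem, where the reverse direction genuinely needs Lemma \ref{ll}. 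Your side remark about the AG-subgroupoid clause in the definition of an intuitionistic fuzzy $(1,2)$-ideal is also well taken: Theorem \ref{asdf} as written only checks the $(1,2)$-inequality, and the subgroupoid condition does follow for free from the left-ideal inequalities exactly as you say. So your proposal is sound and, if anything, cites the more appropriate lemma.
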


\begin{proof}
It is an easy consequence of Theorem \ref{asdf} and Lemma \ref{ll}.
\end{proof}

\begin{theorem}
Let $S$ be a regular AG-band, then $A=(\mu _{A},\gamma _{A})$ is an
intuitionistic fuzzy right ideal of $S$ if $A=(\mu _{A},\gamma _{A})$ is an
intuitionistic fuzzy $(1,2)$-ideal of $S.$
\end{theorem}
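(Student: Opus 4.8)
The plan is to deduce the two right-ideal inequalities $\mu_A(xy)\geq \mu_A(x)$ and $\gamma_A(xy)\leq \gamma_A(x)$ directly from the $(1,2)$-ideal inequalities, for arbitrary $x,y\in S$. Recall that for a $(1,2)$-ideal the governing arguments on the right-hand side of $\mu_A((pw)(qz))\geq \mu_A(p)\wedge \mu_A(q)\wedge \mu_A(z)$ are precisely the first entry of the first factor, the first entry of the second factor, and the second entry of the second factor, while the second entry $w$ of the first factor is free. Hence it suffices to exhibit $xy$ in the shape $(xw)(xx)$, because then all three governing arguments equal $x$ and the meet collapses to $\mu_A(x)$ (dually the join collapses to $\gamma_A(x)$). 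So the whole theorem reduces to a single multiplicative identity, namely showing that $xy\in (xS)x$.

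The key step is the explicit rewriting. I would take $w=yx$ and claim
\begin{equation*}
xy=(x(yx))(xx).
\end{equation*}
To verify this I would start from the right-hand side and push the idempotents outward: by the paramedial law $(3)$, $(x(yx))(xx)=(xx)((yx)x)$, which by $a=a^{2}$ becomes $x((yx)x)$; applying $(4)$ gives $(yx)(xx)=(yx)x$, and finally the left invertive law $(1)$ together with idempotence yields $(yx)x=(xx)y=xy$. Note that regularity is automatic in an AG-band (taking the regularizing element to be $x$ itself, since $(xx)x=x$), so the hypothesis ``regular'' costs nothing and the work is carried entirely by idempotence and the structural laws.

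With the identity in hand the conclusion is immediate: applying the $(1,2)$-ideal inequalities to $(x(yx))(xx)$ gives $\mu_A(xy)=\mu_A((x(yx))(xx))\geq \mu_A(x)\wedge \mu_A(x)\wedge \mu_A(x)=\mu_A(x)$ and $\gamma_A(xy)\leq \gamma_A(x)\vee \gamma_A(x)\vee \gamma_A(x)=\gamma_A(x)$, so $A=(\mu_A,\gamma_A)$ is an intuitionistic fuzzy right ideal. I expect the main obstacle to be producing the rewriting $xy=(xw)(xx)$: the clean collapse above leans on the paramedial law $(3)$ and law $(4)$, i.e. on the presence of a left identity, and if one tries to get by with only the invertive and medial laws the natural manipulations loop back to $xy$ without reducing the number of distinct governing arguments. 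Thus locating a decomposition in which all three governing slots are forced to be $x$ is the real content of the proof.
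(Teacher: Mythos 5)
Your overall strategy---reduce the right-ideal inequality to exhibiting $xy$ in the shape $(xw)(xx)$, so that all three governing slots of the $(1,2)$-ideal condition are occupied by $x$---is surely the intended one; the paper's own proof is literally ``It is simple,'' so there is nothing concrete to compare against. Your reading of which arguments govern the $(1,2)$-ideal inequality is correct, and so is your observation that regularity is automatic in an AG-band (take $x$ itself as the regularizing element, since $(xx)x=x$).

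The gap is the one you half-acknowledge at the end and then leave unresolved: your verification of $xy=(x(yx))(xx)$ invokes the paramedial law $(3)$ and law $(4)$, which the paper establishes only for AG-groupoids \emph{with a left identity}, whereas the theorem hypothesizes only a regular AG-band. The appeal to $(3)$ is harmless---the medial law $(2)$, valid in every AG-groupoid, gives $(x(yx))(xx)=(xx)((yx)x)$ just as well---but the passage from $x((yx)x)$ to $(yx)(xx)$ genuinely needs $(4)$. Using only $(1)$, $(2)$ and idempotence one gets $(x(yx))(xx)=x((yx)x)=x(xy)$, and there the computation stalls: there is no apparent way to recover $xy$ from $x(xy)$ with the stated hypotheses. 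Worse, if you add a left identity to rescue the step, an AG-band with left identity is forced to be commutative (by paramedial plus idempotence, $ab=(ab)(ab)=(ba)(ba)=ba$), hence an associative semilattice, so your argument then only covers a degenerate case where the claim is trivial. To be fair, the fault likely lies with the paper---every neighbouring result carries the left-identity hypothesis and this one probably should too---but as written your proof does not establish the statement as stated: you would need a decomposition of $xy$ into the form $(xw)(xx)$ that avoids law $(4)$, and you have not produced one.
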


\begin{proof}
It is simple.
\end{proof}

\begin{lemma}
$($\cite{Mordeson},\cite{mad}$)$ \label{00} For any IFS $A=(\mu _{A},\gamma
_{A})$ of an AG-groupoid $S,$ the following properties holds.
\end{lemma}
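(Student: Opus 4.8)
Following the pattern of Lemma \ref{as}, I read the stated properties as product characterizations of the intuitionistic fuzzy bi-ideal, generalized bi-ideal and $(1,2)$-ideal conditions in terms of containments between products of $\mu_A$, $\gamma_A$ and the constant maps $S_\delta\equiv 1$, $\Theta_\delta\equiv 0$. The plan is to prove each such equivalence by its two implications, using one uniform device: unfold the definition of $\circ$ as a supremum (for the $\mu$-part) or an infimum (for the $\gamma$-part), and then match the resulting terms against the defining pointwise inequalities of the ideal in question.

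I would treat the generalized bi-ideal as the model case. For the forward implication I expand $\bigl((\mu_A\circ S_\delta)\circ\mu_A\bigr)(z)=\bigvee_{z=(rs)q}\{\mu_A(r)\wedge\mu_A(q)\}$, where $S_\delta\equiv 1$ collapses the inner product to $\bigvee_{p=rs}\mu_A(r)$; each surviving term $\mu_A(r)\wedge\mu_A(q)$ sits over a factorization $z=(rs)q$, so the hypothesis $\mu_A((rs)q)\geq\mu_A(r)\wedge\mu_A(q)$ bounds every term, and hence the whole supremum, by $\mu_A(z)$, giving $(\mu_A\circ S_\delta)\circ\mu_A\subseteq\mu_A$. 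For the converse I feed the single factorization $z=(xa)y$ (with inner split $xa=x\cdot a$ and $S_\delta(a)=1$) into the same expansion, so that this one term is bounded below by $\mu_A(x)\wedge\mu_A(y)$; the product is therefore at least $\mu_A(x)\wedge\mu_A(y)$, and the assumed containment then forces $\mu_A((xa)y)\geq\mu_A(x)\wedge\mu_A(y)$. The bi-ideal case merely adjoins the AG-subgroupoid clause $\mu_A\circ\mu_A\subseteq\mu_A$, which is Lemma \ref{as}(i).

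The $(1,2)$-ideal case is where I expect the real friction, since its defining inequality $\mu_A((xw)(yz))\geq\mu_A(x)\wedge\mu_A(y)\wedge\mu_A(z)$ forces a triple factorization: I would expand $\bigl((\mu_A\circ S_\delta)\circ(\mu_A\circ\mu_A)\bigr)(u)=\bigvee_{u=(xw)(yz)}\{\mu_A(x)\wedge\mu_A(y)\wedge\mu_A(z)\}$ and argue exactly as above, the delicacy being to keep the two nested suprema aligned with the bracketing $(xw)(yz)$ rather than silently reassociating them. Throughout, the nonmembership half is the strict order-dual: every $\bigvee$, $\wedge$, $\subseteq$ and lower bound becomes $\bigwedge$, $\vee$, $\supseteq$ and upper bound, and the default values $0$ and $1$ in the definition of $\circ$ are interchanged, so the $\gamma$-arguments mirror the $\mu$-arguments verbatim and need no fresh idea. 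The main obstacle is thus essentially bookkeeping, namely verifying that the nested product expansions reproduce precisely the bracketings $(rs)q$ and $(xw)(yz)$ demanded by the definitions; once that alignment is secured, each equivalence falls out of the two-line term comparisons sketched above.
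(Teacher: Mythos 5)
You have proved a different statement from the one labelled Lemma \ref{00}. The properties this lemma actually asserts (they are listed immediately after the lemma environment, and this is how the lemma is invoked throughout the paper, e.g.\ in Lemma \ref{1}, in the proof of Theorem \ref{left}, and in the $(iii)\Longrightarrow(ii)$ step of the $R\cap L=RL$ theorem) concern the intuitionistic characteristic function $\chi_A=(\mu_{\chi_A},\gamma_{\chi_A})$ of a crisp subset $A\subseteq S$: namely $(i)$ $A$ is an AG-subgroupoid of $S$ if and only if $\chi_A$ is an intuitionistic fuzzy AG-subgroupoid of $S$, and $(ii)$ $A$ is a left (right, two-sided) ideal of $S$ if and only if $\chi_A$ is an intuitionistic fuzzy left (right, two-sided) ideal of $S$. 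Your proposal instead develops product characterizations of intuitionistic fuzzy (generalized) bi-ideals and $(1,2)$-ideals in the style of Lemma \ref{as}; none of that bears on the characteristic-function equivalences, so the lemma as it is actually used in the paper is not established by your argument.

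The intended proof is elementary and of a quite different flavour from your supremum/infimum bookkeeping. Take the left-ideal case of $(ii)$: if $A$ is a left ideal and $x,y\in S$, then either $y\notin A$, in which case $\mu_{\chi_A}(xy)\geq 0=\mu_{\chi_A}(y)$ and $\gamma_{\chi_A}(xy)\leq 1=\gamma_{\chi_A}(y)$ hold trivially, or $y\in A$, in which case $xy\in SA\subseteq A$ gives $\mu_{\chi_A}(xy)=1=\mu_{\chi_A}(y)$ and $\gamma_{\chi_A}(xy)=0=\gamma_{\chi_A}(y)$. Conversely, if $\chi_A$ is an intuitionistic fuzzy left ideal and $y\in A$, then $\mu_{\chi_A}(xy)\geq\mu_{\chi_A}(y)=1$ forces $xy\in A$, so $SA\subseteq A$. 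The AG-subgroupoid, right-ideal and two-sided cases are the same two-valued check with the appropriate defining inequality. (The paper itself gives no proof, citing \cite{Mordeson} and \cite{mad}, but this is the standard argument.) The product characterizations you sketch belong to Lemma \ref{as} and would have to be stated and proved as a separate result.
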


$(i)$ $A$ is an AG-subgroupoid of $S$ if and only if $\chi _{A}$ is an
intuitionistic fuzzy AG-subgroupoid of $S$.

$(ii)$ $A$ is an intuitionistic left (right, two-sided) ideal of $S$ if and
only if $\chi _{A}$ is an intuitionistic fuzzy left (right, two-sided) ideal
of $S$.

A subset $A$ of an AG-groupoid $S$ is called semiprime if $a^{2}\in A$
implies $a\in A.$ An $IFS$ $A=(\mu _{A},\gamma _{A})$ of an AG-groupoid $S$
is called an intuitionistic fuzzy semiprime if $\mu _{A}(a)\geq \mu
_{A}(a^{2})$ and $\gamma _{A}(a)\leq \gamma _{A}(a^{2})$ for all $a$ in $S.$

\begin{lemma}
\label{1}Every right (left, two-sided ideal) of an AG-groupoid $S$ is
semiprime if and only if their characteristic functions are intuitionistic
fuzzy semiprime.
\end{lemma}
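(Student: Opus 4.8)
The plan is to reduce the biconditional to the pointwise equivalence for a single ideal and then exploit the two-valued nature of the characteristic function. Fix a right (left, two-sided) ideal $A$ of $S$; it suffices to show that $A$ is semiprime if and only if $\chi_A = (\mu_{\chi_A}, \gamma_{\chi_A})$ is intuitionistic fuzzy semiprime, since the quantified statement over all ideals then follows at once. Recall that $\mu_{\chi_A}$ and $\gamma_{\chi_A}$ take only values in $\{0,1\}$, with $\mu_{\chi_A}(x) = 1 \iff x \in A \iff \gamma_{\chi_A}(x) = 0$.

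For the forward implication, suppose $A$ is semiprime and fix $a \in S$; I must verify $\mu_{\chi_A}(a) \geq \mu_{\chi_A}(a^2)$ and $\gamma_{\chi_A}(a) \leq \gamma_{\chi_A}(a^2)$. If $a^2 \notin A$, then $\mu_{\chi_A}(a^2) = 0$ and $\gamma_{\chi_A}(a^2) = 1$, so both inequalities hold trivially because $\mu_{\chi_A}(a) \geq 0$ and $\gamma_{\chi_A}(a) \leq 1$. If instead $a^2 \in A$, semiprimeness of $A$ gives $a \in A$, whence $\mu_{\chi_A}(a) = 1 = \mu_{\chi_A}(a^2)$ and $\gamma_{\chi_A}(a) = 0 = \gamma_{\chi_A}(a^2)$, and the inequalities again hold. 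Thus $\chi_A$ is intuitionistic fuzzy semiprime.

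For the converse, suppose $\chi_A$ is intuitionistic fuzzy semiprime and take any $a \in S$ with $a^2 \in A$. Then $\mu_{\chi_A}(a^2) = 1$, so the semiprimeness condition $\mu_{\chi_A}(a) \geq \mu_{\chi_A}(a^2)$ forces $\mu_{\chi_A}(a) = 1$, i.e.\ $a \in A$; the nonmembership condition $\gamma_{\chi_A}(a) \leq \gamma_{\chi_A}(a^2) = 0$ delivers the same conclusion. Hence $a^2 \in A$ implies $a \in A$, that is, $A$ is semiprime.

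I do not expect any real obstacle here: the entire argument is bookkeeping with $\{0,1\}$-valued functions together with a single case distinction on whether $a^2 \in A$. The only point requiring a little care is that $\gamma_{\chi_A}$ is the complementary indicator, so its defining inequality runs in the opposite direction to that of $\mu_{\chi_A}$; one must check that it nonetheless yields the same membership conclusion, which it does precisely because $\gamma_{\chi_A} = \mu_{\chi_{A^{C}}}$ as noted earlier in the paper. Note also that the hypothesis that $A$ is an ideal is not actually used in this equivalence; it merely fixes the class of subsets to which the lemma is applied.
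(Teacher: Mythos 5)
Your proof is correct and follows essentially the same route as the paper: a pointwise check exploiting the $\{0,1\}$-valued nature of $\mu_{\chi_A}$ and $\gamma_{\chi_A}$. In fact your write-up is more complete than the paper's, which only treats the case $a^{2}\in A$ in the forward direction and dismisses the converse as ``simple''; your explicit case split on $a^{2}\in A$ versus $a^{2}\notin A$, and your observation that the ideal hypothesis plays no role in the equivalence, are both accurate.
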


\begin{proof}
Let $R$ be any right ideal of an AG-groupoid $S$, then by Lemma \ref{00},
the intuitionistic characteristic function of $R,$ that is, $\chi _{R}=(\mu
_{\chi _{R}},\gamma _{\chi _{R}})$ is an intuitionistic fuzzy right ideal of 
$S$. Let $a^{2}\in R$, then $\mu _{\chi _{R}}(a^{2})=1$ and assume that $R$
is semiprime, then $a\in R,$ which implies that $\mu _{\chi _{R}}(a)=1$.
Thus we get $\mu _{\chi _{R}}(a^{2})=\mu _{\chi _{R}}(a)$ and similarly we
can show that $\gamma _{\chi _{R}}(a^{2})=\gamma _{\chi _{R}}(a)$, therefore 
$\chi _{R}=(\mu _{\chi _{R}},\gamma _{\chi _{R}})$ is an intuitionistic
fuzzy semiprime. The converse is simple.
\end{proof}

\begin{corollary}
\label{11}Let $S$ be an AG-groupoid, then every right (left, two-sided)
ideal of $S$ is semiprime if every intuitionistic fuzzy right (left,
two-sided) ideal of $S$ is an intuitionistic fuzzy semiprime.
\end{corollary}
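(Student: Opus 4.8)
The plan is to deduce this directly from Lemma \ref{00} and Lemma \ref{1}, transferring the semiprimeness property back and forth between an ideal and its characteristic function. I will treat the right-ideal case in detail; the left and two-sided cases are identical word for word, after replacing ``right'' by the appropriate term and invoking the corresponding clause of Lemma \ref{00}$(ii)$ and of Lemma \ref{1}.

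First I would fix an arbitrary right ideal $R$ of $S$. By Lemma \ref{00}$(ii)$, its intuitionistic characteristic function $\chi_{R}=(\mu_{\chi_{R}},\gamma_{\chi_{R}})$ is then an intuitionistic fuzzy right ideal of $S$. Now I apply the hypothesis: since every intuitionistic fuzzy right ideal of $S$ is assumed to be intuitionistic fuzzy semiprime, the particular intuitionistic fuzzy right ideal $\chi_{R}$ is intuitionistic fuzzy semiprime, that is, $\mu_{\chi_{R}}(a)\geq\mu_{\chi_{R}}(a^{2})$ and $\gamma_{\chi_{R}}(a)\leq\gamma_{\chi_{R}}(a^{2})$ for all $a\in S$.

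Finally I would invoke the ``if'' direction of Lemma \ref{1} (the ``converse is simple'' half of its proof) to return from $\chi_{R}$ to $R$: whenever $a^{2}\in R$ we have $\mu_{\chi_{R}}(a^{2})=1$, so semiprimeness of $\chi_{R}$ forces $\mu_{\chi_{R}}(a)\geq 1$, hence $\mu_{\chi_{R}}(a)=1$ and therefore $a\in R$. Thus $R$ is semiprime, and since $R$ was an arbitrary right ideal, every right ideal of $S$ is semiprime. I do not anticipate any genuine obstacle, as the statement merely repackages two earlier results; the only point requiring care is to apply Lemma \ref{1} in the direction ``$\chi_{R}$ intuitionistic fuzzy semiprime $\Rightarrow R$ semiprime'' rather than in its stated converse direction, and to make sure the three-way parenthetical (right, left, two-sided) is matched consistently in each of Lemma \ref{00}$(ii)$ and Lemma \ref{1}.
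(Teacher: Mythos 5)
Your proposal is correct and follows exactly the route the paper intends: the corollary is stated as an immediate consequence of Lemma \ref{00}$(ii)$ (passing from the ideal $R$ to the intuitionistic fuzzy ideal $\chi_{R}$) together with the ``converse'' direction of Lemma \ref{1} (recovering semiprimeness of $R$ from intuitionistic fuzzy semiprimeness of $\chi_{R}$), and the paper simply leaves this chain implicit. Your explicit unwinding of the step $\mu_{\chi_{R}}(a)\geq\mu_{\chi_{R}}(a^{2})=1\Rightarrow a\in R$ is exactly the ``simple'' converse the authors omit, so there is nothing to add.
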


The converse is not true in general. For this let us consider an AG-groupoid 
$S$ in Example \ref{e2}. It is easy to observe that the only left ideals of $%
S$ are $\{a,b,e\},$ $\{a,c,e\},$ $\{a,b,c,e\}$ and $\{a,e\}$ which are
semiprime. Clearly the right and two sided ideals of $S$ are $\{a,b,c,e\}$
and $\{a,e\}$ which are also semiprime. Now on the other hand, if we define
an $IFS$ $A=(\mu _{A},\gamma _{A})$ of $S$ as follows: $\mu _{A}(a)=\mu
_{A}(b)=\mu _{A}(c)=0.2,$ $\mu _{A}(d)=0.1$, $\mu _{A}(e)=0.3,$ $\gamma
_{A}(a)=0.2,$ $\gamma _{A}(b)=\gamma _{A}(c)=0.5,$ $\gamma _{A}(d)=0.6$ and $%
\gamma _{A}(e)=0.3,$ then $A=(\mu _{A},\gamma _{A})$ is a fuzzy right (left,
two-sided) ideal of $S$ but $A=(\mu _{A},\gamma _{A})$ is not an
intuitionistic fuzzy semiprime, because $\mu _{A}(c)\ngeq \mu _{A}(c^{2})$
and $\gamma _{A}(c)\nleqslant \gamma _{A}(c^{2}).$

An element $a$ of an AG-groupoid $S$ is called an intra-regular if there
exist $x,y\in S$ such that $a=(xa^{2})y$ and $S$ is called an intra-regular
if every element of $S$ is an intra-regular.

\begin{example}
\label{t}Let $S=\{a,b,c,d,e\}$ be an AG-groupoid with left identity $b$ in
the following cayley's table.
\end{example}

\begin{center}
\begin{tabular}{l|lllll}
. & $a$ & $b$ & $c$ & $d$ & $e$ \\ \hline
$a$ & $a$ & $a$ & $a$ & $a$ & $a$ \\ 
$b$ & $a$ & $b$ & $c$ & $d$ & $e$ \\ 
$c$ & $a$ & $e$ & $b$ & $c$ & $d$ \\ 
$d$ & $a$ & $d$ & $e$ & $b$ & $c$ \\ 
$e$ & $a$ & $c$ & $d$ & $e$ & $b$%
\end{tabular}
\end{center}

\QTP{Body Math}
Clearly $S$ is an intra-regular because, $a=(aa^{2})a,$ $b=(cb^{2})e,$ $%
c=(dc^{2})e,$ $d=(cd^{2})c$ and $e=(be^{2})e.$

\begin{lemma}
\label{2}For an intra-regular AG-groupoid $S$ with left identity, the
following holds.
\end{lemma}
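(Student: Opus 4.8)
The plan is to reduce each of the asserted identities to the defining relation of intra-regularity, namely $a=(xa^{2})y$ for suitable $x,y\in S$, and then to massage the right-hand side using the four structural laws $(1)$--$(4)$ available in an AG-groupoid with left identity, exactly as was done in the proof of Theorem \ref{aw}. Since $S$ has a left identity I may freely replace a factor $t$ by $et$ and then invoke the medial law $(2)$, the paramedial law $(3)$, or the identity $(4)$ to shuffle the left identity into a convenient position; this is the standard device for turning the single block $a^{2}$ occurring in $(xa^{2})y$ into whatever shape each part of the lemma demands. The reverse inclusions, where the statement is a set equality, are typically immediate from $S^{2}=S$ together with the basic ideal inequalities, so the real work always lies in the inclusion that consumes intra-regularity.

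Concretely, I would handle each part separately, starting from $a=(xa^{2})y$. To move $a^{2}$ toward an outer position I would apply the left invertive law $(1)$ to swap outer factors, and to liberate a lone $a$ from $a^{2}=aa$ I would apply $(4)$. For example the micro-step $ya^{2}=y(aa)=a(ya)$, obtained from $(4)$ by reading its $a,b,c$ as $y,a,a$, is exactly the kind of manipulation I expect to reuse repeatedly; chaining such rewrites with $(1)$ and $(3)$ then deposits $a$ inside the prescribed set. Each target is reached by a single such chain, after which the corresponding statement about an $IFS$ $A=(\mu_{A},\gamma_{A})$, if asserted, follows by evaluating $\mu_{A}$ and $\gamma_{A}$ along the same decomposition and using that intuitionistic fuzzy ideals interact with products through Lemma \ref{as}.

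The main obstacle is bookkeeping rather than conceptual: the available tools are essentially forced, but discovering the precise order in which to apply $(1)$--$(4)$ so that the bracketing collapses to the intended form is delicate, because each law permits only one very specific rearrangement and a wrong first move strands the computation in a bracketing from which the target can no longer be reached. To control this I would work backwards from the desired final shape to determine which law must be applied last, fix the antepenultimate shape it requires, and continue this reverse reasoning until I meet the expression $(xa^{2})y$; only then would I record the chain in the forward direction. This keeps the otherwise opaque sequence of identity-applications disciplined and guarantees that every equality sign is justified by a single named law.
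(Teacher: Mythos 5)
You have correctly identified the only available technique --- start from $a=(xa^{2})y$, insert the left identity, and shuffle with $(1)$--$(4)$ until $a^{2}$ sits in the position where the ideal inequality applies --- and this is indeed the paper's method. But the proposal stops at the level of strategy and never becomes a proof. The first gap is that you never pin down what ``the following'' asserts: you speculate about set equalities and ``reverse inclusions,'' whereas the lemma's three items state that every intuitionistic fuzzy right (resp.\ left, two-sided) ideal of $S$ is intuitionistic fuzzy semiprime, i.e.\ $\mu_{A}(a)\geq \mu_{A}(a^{2})$ and $\gamma_{A}(a)\leq \gamma_{A}(a^{2})$ for all $a\in S$. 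There is no set equality and no reverse inclusion to dispatch; the whole content of each item is a single pointwise inequality, so the remark that ``the reverse inclusions are typically immediate'' addresses a statement the lemma does not make.

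The second, decisive gap is that the chain of rewritings \emph{is} the proof, and you do not exhibit it. For a right ideal one must produce a factorization $a=a^{2}u$; the paper gets $a=(xa^{2})y=(xa^{2})(ey)=(ye)(a^{2}x)=a^{2}((ye)x)$ by $(3)$ and $(4)$, and then $\mu_{A}(a)=\mu_{A}\bigl(a^{2}((ye)x)\bigr)\geq \mu_{A}(a^{2})$ follows from the pointwise definition of a right ideal --- not from Lemma \ref{as}, which you invoke but which is the product characterization and plays no role here. For a left ideal one needs instead $a=u\,a^{2}$, which requires a considerably longer chain through $(4)$, $(3)$ and $(1)$; asserting that ``each target is reached by a single such chain'' without displaying it is precisely the step a referee cannot check. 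Moreover your sample micro-step $ya^{2}=y(aa)=a(ya)$, whose declared purpose is ``to liberate a lone $a$ from $a^{2}$,'' points in the wrong direction: to conclude semiprimeness you must end with $a^{2}$ intact as one factor of the final product, not dissolved into two separate copies of $a$. Item (iii) is then an immediate consequence of (i) and (ii). As written, the proposal establishes none of the three items.
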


$(i)$ Every intuitionistic fuzzy right ideal of $S$ is an intuitionistic
fuzzy semiprime.

$(ii)$ Every intuitionistic fuzzy left ideal of $S$ is an intuitionistic
fuzzy semiprime.

$(iii)$ Every intuitionistic fuzzy two-sided ideal of $S$ is an
intuitionistic fuzzy semiprime.

\begin{proof}
$(i):$ Let $A=(\mu _{A},\gamma _{A})$ be an intuitionistic fuzzy right ideal
of an intra-regular AG-groupoid $S$ with left identity and \ let $a\in S,$
then there exists $x,y\in S$ such that $a=(xa^{2})y.$ Now by using $(3)$ and 
$(4),$ we have 
\begin{equation*}
\mu _{A}(a)=\mu _{A}((xa^{2})y)=\mu _{A}((xa^{2})(ey))=\mu
_{A}((ye)(a^{2}x))=\mu _{A}(a^{2}((ye)x))\geq \mu _{A}(a^{2})
\end{equation*}

and similarly%
\begin{equation*}
\gamma _{A}(a)=\gamma _{A}((xa^{2})y)=\gamma _{A}((xa^{2})(ey))=\gamma
_{A}((ye)(a^{2}x))=\gamma _{A}(a^{2}((ye)x))\leq \gamma _{A}(a^{2}).
\end{equation*}

Thus $A=(\mu _{A},\gamma _{A})$ is an intuitionistic fuzzy semiprime.

$(ii):$ Let $A=(\mu _{A},\gamma _{A})$ be an intuitionistic fuzzy left ideal
of an intra-regular AG-groupoid $S$ with left identity and let $a\in S,$
then there exist $x,y\in S$ such that $a=(xa^{2})y.$ Now by using $(4),$ $%
(3) $ and $(1),$ we have%
\begin{eqnarray*}
\mu _{A}(a) &=&\mu _{A}((xa^{2})y)=\mu _{A}((x(aa))y)=\mu _{A}((a(xa))y)=\mu
_{A}((((xa^{2})y)(xa))y) \\
&=&\mu _{A}(((ax)(y(xa^{2})))y)=\mu _{A}(((ax)(y((ex)(aa))))y) \\
&=&\mu _{A}(((ax)(y(a^{2}(xe))))y)=\mu _{A}(((ax)(a^{2}(y(xe))))y) \\
&=&\mu _{A}(a^{2}((ax)(y(xe)))y)=\mu _{A}((y((y(xe))(ax)))a^{2})\geq \mu
_{A}(a^{2}).
\end{eqnarray*}

Similarly we can show that $\gamma _{A}(a)\leq \gamma _{A}(a^{2})$ and
therefore $A=(\mu _{A},\gamma _{A})$ is an intuitionistic fuzzy semiprime.

$(iii):$ It can be followed from $(i)$ and $(ii).$
\end{proof}

\begin{theorem}
\label{3}Let $S$ be an AG-groupoid with left identity, then the following
statements are equivalent.
\end{theorem}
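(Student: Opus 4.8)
The plan is to prove the listed statements equivalent by a single cycle of implications rather than by separate pairwise arguments; since every condition is an intuitionistic-fuzzy reformulation of intra-regularity, a cycle $(i)\Rightarrow(ii)\Rightarrow\cdots\Rightarrow(i)$ that closes back on ``$S$ is intra-regular'' is the shortest route and lets each implication reuse the structural laws $(1)$--$(4)$ already exploited above.

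For every implication whose hypothesis is that $S$ is intra-regular, I would argue elementwise. Fixing $a\in S$, intra-regularity supplies $x,y\in S$ with $a=(xa^{2})y$; I would substitute this into $\mu_{A}(a)$ and $\gamma_{A}(a)$ and shuffle the brackets using the left invertive law $(1)$, the medial law $(2)$, the paramedial law $(3)$ and the law $(4)$, exactly along the lines of Lemma \ref{2}, until $a$ appears in a shape yielding the required inequality (for a semiprime-type conclusion a factor $a^{2}$ is pushed to an outer position; for an idempotent-type conclusion one compares with a single self-factorization). For the conditions stated through the product $\circ$, I would instead expand $(\mu_{A}\circ\mu_{B})(a)=\bigvee_{a=bc}\{\mu_{A}(b)\wedge\mu_{B}(c)\}$, select the one factorization coming from $a=(xa^{2})y$ to bound the join from below, and then apply the ideal hypotheses on $A$ and $B$ together with Lemma \ref{fgh} to identify the product with the intersection.

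For the implication that must recover intra-regularity from a purely fuzzy hypothesis, the standard device is to test that hypothesis on characteristic functions. Given $a\in S$, I would form the appropriate principal ideal---$Sa^{2}$, or $(Sa^{2})S$, whichever the condition dictates---note by Lemma \ref{00} that its intuitionistic characteristic function is an intuitionistic fuzzy ideal of the required type, feed this $\chi$ into the hypothesis evaluated at $a$, and read off $\mu_{\chi}(a)=1$ together with $\gamma_{\chi}(a)=0$. This forces $a$ to belong to the corresponding crisp set, from which the representation $a=(xa^{2})y$ is to be extracted.

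The hard part will be this last extraction. Because the multiplication is non-associative, membership of $a$ in something like $(Sa^{2})S$ does not immediately present the canonical form $a=(xa^{2})y$, so the real work is the crisp set-theoretic inclusion: showing that the chosen principal ideal is contained in the set of intra-regular elements. I would isolate this as a preliminary computation, repeatedly invoking the left identity together with $(2)$ and $(3)$ to migrate the factor $a^{2}$ into the middle bracket, and only afterwards carry out the essentially routine fuzzy-to-crisp passage through the characteristic function. Once the cycle is closed at this point, all the stated equivalences follow at once.
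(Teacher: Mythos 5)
Your plan matches the paper's proof: the forward direction is exactly the elementwise computation of Lemma \ref{2}, and the converse is obtained, as in the paper, by feeding the intuitionistic characteristic function of the right ideal $a^{2}S$ (equal to $Sa^{2}$ by the paramedial law) into the semiprime hypothesis to get $a\in a^{2}S$ from $a^{2}\in a^{2}S$, and then using the laws $(1)$--$(4)$ to convert $a\in a^{2}S$ into $a\in (Sa^{2})S$. The only slight misplacement is your locating the difficulty in extracting $a=(xa^{2})y$ from $a\in (Sa^{2})S$ (which is immediate); the real crisp work, which the paper carries out with the left invertive law, is the passage from $a\in a^{2}S$ to $a\in (Sa^{2})S$.
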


$(i)$ $S$ is intra-regular.

$(ii)$ Every intuitionistic fuzzy right (left, two-sided) ideal of $S$ is an
intuitionistic fuzzy semiprime.

\begin{proof}
$(i)\Longrightarrow (ii)$ is followed by Lemma \ref{2}.

$(ii)\Longrightarrow (i):$ Let $S$ be an AG-groupoid with left identity and
let every intuitionistic fuzzy right (left, two-sided) ideal of $S$ is an
intuitionistic fuzzy semiprime. Since $a^{2}S$ is a right and also a left
ideal of $S$, therefore by using Corollary \ref{11}, $a^{2}S\ $is semiprime.
Clearly $a^{2}\in a^{2}S$ and therefore $a\in a^{2}S$. Now by using $(1)$,
we have%
\begin{equation*}
a\in a^{2}S=(aa)S=(Sa)a\subseteq
(Sa)(a^{2}S)=((a^{2}S)a)S=((aS)a^{2})S\subseteq (Sa^{2})S.
\end{equation*}%
Which shows that $S$ is an intra-regular.
\end{proof}

\begin{theorem}
The following statements are equivalent for an AG-groupoid with left
identity.
\end{theorem}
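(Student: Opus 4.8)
The plan is to establish the listed equivalences by a cycle of implications $(i)\Rightarrow(ii)\Rightarrow\cdots\Rightarrow(i)$, in the same spirit as Theorem \ref{3}. Since every condition after the first will be phrased through the product $\circ$ and the intersection $\cap$ of intuitionistic fuzzy ideals, the workhorses will be Lemma \ref{as} (the $\circ$-characterization of intuitionistic fuzzy subgroupoids and one-sided ideals), Lemma \ref{fgh}, Lemma \ref{00}, Corollary \ref{11}, and repeated use of the structural identities $(1)$--$(4)$ together with the left identity $e$.

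To open the cycle I would start from intra-regularity. Given $a\in S$ there are $x,y\in S$ with $a=(xa^{2})y$. The core move is to rewrite this single factorization, one identity at a time, so that $a$ is displayed as a product $a=pq$ in which the left factor is controlled by the intuitionistic fuzzy left-ideal hypothesis and the right factor by the right-ideal hypothesis. This is precisely the style of rewriting carried out in Lemma \ref{2}, where $(xa^{2})y$ was transformed into $a^{2}((ye)x)$ by $(3)$ and $(4)$. Once $a=pq$ is arranged, the definition of $\circ$ (a supremum over factorizations for $\mu$, an infimum for $\gamma$) immediately yields $(\mu_{A}\circ\mu_{B})(a)\geq\mu_{A}(p)\wedge\mu_{B}(q)$ and dually for $\gamma$, giving the required inclusion between $A\circ B$ and $A\cap B$.

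For the implication that closes the cycle I would specialize the fuzzy hypothesis to characteristic functions. By Lemma \ref{00} the characteristic function of a suitable principal ideal, such as $\chi_{a^{2}S}$, is an intuitionistic fuzzy ideal, so applying the fuzzy condition to it collapses to a set-theoretic containment. A computation identical to the one in Theorem \ref{3}, namely $a\in a^{2}S=(Sa)a\subseteq(Sa^{2})S$ obtained via $(1)$, then exhibits the intra-regular witnesses $x,y$ explicitly and recovers $(i)$. Corollary \ref{11} is what licenses this descent from the fuzzy statement to the semiprimeness or membership statement on subsets.

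I expect the main obstacle to be the bookkeeping in the forward rewriting: selecting the correct order in which to apply $(1)$--$(4)$ so that the two factors of $a=pq$ simultaneously land in the intended ideals, since non-associativity forces every rebracketing to be justified individually and one misapplied identity breaks the chain. As in Theorem \ref{ki} and Theorem \ref{asdf}, the reliable device is to insert the left identity where convenient (writing a factor as $ea$) and then use $(4)$ to migrate it past a bracket, which is typically what lets the left-ideal part and the right-ideal part of the product separate cleanly.
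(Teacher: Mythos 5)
Your proposal is aimed at the wrong theorem. The statements being asserted equivalent here are: (i) $S$ is intra-regular; (ii) every intuitionistic fuzzy right ideal of $S$ is intuitionistic fuzzy semiprime; (iii) every intuitionistic fuzzy left ideal of $S$ is intuitionistic fuzzy semiprime. You assumed the conditions after the first would be ``phrased through the product $\circ$ and the intersection $\cap$,'' and accordingly built your argument around establishing $A\circ B\supseteq A\cap B$ from a factorization $a=pq$; that is the content and the proof of a \emph{different} theorem in this paper (the one characterizing intra-regularity by $R\cap L=RL$ and $A\cap B=A\circ B$), and it has no bearing on semiprimeness, which asks for $\mu_{A}(a)\geq\mu_{A}(a^{2})$ and $\gamma_{A}(a)\leq\gamma_{A}(a^{2})$. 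For that one must exhibit $a$ as a product having $a^{2}$ as the factor seen by the ideal property --- e.g.\ $a=(xa^{2})y=a^{2}((ye)x)$ for right ideals --- which is exactly Lemma \ref{2}; you cite that computation only as a stylistic analogy and then feed it into the wrong inequality.

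The parts of your plan that do survive are the closing step and the general shape of the cycle: your argument via $\chi_{a^{2}S}$, Corollary \ref{11}, and the chain $a\in a^{2}S=(Sa)a\subseteq(Sa^{2})S$ is precisely the paper's $(ii)\Rightarrow(i)$, inherited from Theorem \ref{3}, and $(i)\Rightarrow(iii)$ is likewise already contained in Theorem \ref{3} and Lemma \ref{2}. But the one implication that is genuinely new in this theorem --- that semiprimeness of all intuitionistic fuzzy left ideals (iii) forces semiprimeness of all intuitionistic fuzzy right ideals (ii) --- is absent from your proposal. The paper gets it in one line from Lemma \ref{qer}: in an AG-groupoid with left identity every intuitionistic fuzzy right ideal is already an intuitionistic fuzzy left ideal (via $\mu_{A}(ab)=\mu_{A}((ea)b)=\mu_{A}((ba)e)\geq\mu_{A}(b)$ and the left invertive law), so condition (iii) applies to it directly. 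Without that observation, or some substitute for it, the cycle $(i)\Rightarrow(iii)\Rightarrow(ii)\Rightarrow(i)$ does not close.
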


$(i)$ $S\,$is an intra-regular.

$(ii)$ Every intuitionistic fuzzy right ideal of $S$ is an intuitionistic
fuzzy semiprime.

$(iii)$ Every intuitionistic fuzzy left ideal of $S$ is an intuitionistic
fuzzy semiprime.

\begin{proof}
$(i)\Longrightarrow (iii)$ and $(ii)\Longrightarrow (i)$ are followed by
Theorem \ref{3}.

$(iii)\Longrightarrow (ii):$ Let $S$ be an AG-groupoid with left identity
and let every intuitionistic fuzzy left ideal of $S$ is an intuitionistic
fuzzy semiprime. Let $A=(\mu _{A},\gamma _{A})$ be an intuitionistic fuzzy
right ideal of $S$. Now by using Lemma \ref{qer}, $A=(\mu _{A},\gamma _{A})$
is an intuitionistic
\end{proof}

\begin{lemma}
$($\cite{Mordeson},\cite{mad}$)$ \label{000} For any non-empty $IFSs$ $%
A=(\mu _{A},\gamma _{A})$ and $B=(\mu _{B},\gamma _{B})$ of an AG-groupoid $%
S $, then $\chi _{A}\circ \chi _{B}=\chi _{AB}.$
\end{lemma}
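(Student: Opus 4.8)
The plan is to read $A$ and $B$ as the non-empty subsets whose intuitionistic characteristic functions appear (the symbol $\chi_{AB}$ only makes sense for the set product $AB=\{bc : b\in A,\ c\in B\}$, so despite the word ``IFSs'' these must be subsets of $S$), and to verify the two coordinate equalities $\mu_{\chi_A}\circ\mu_{\chi_B}=\mu_{\chi_{AB}}$ and $\gamma_{\chi_A}\circ\gamma_{\chi_B}=\gamma_{\chi_{AB}}$ separately by evaluating both sides at an arbitrary element $a\in S$. The whole argument rests on the fact that characteristic functions take only the values $0$ and $1$, so every join reduces to asking whether \emph{some} term equals $1$ and every meet to asking whether \emph{every} term equals $1$.

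First I would treat the membership coordinate. Fix $a\in S$ and split into cases according to whether $a\in AB$. If $a\in AB$, then $a=bc$ with $b\in A$ and $c\in B$, so for this one factorization $\mu_{\chi_A}(b)\wedge\mu_{\chi_B}(c)=1\wedge 1=1$; since no term of the join can exceed $1$, the join $(\mu_{\chi_A}\circ\mu_{\chi_B})(a)$ equals $1=\mu_{\chi_{AB}}(a)$. If $a\notin AB$, there are two sub-possibilities: either $a$ admits no factorization at all, in which case the product is $0$ by the ``otherwise'' clause, or every factorization $a=bc$ has $b\notin A$ or $c\notin B$, forcing each term $\mu_{\chi_A}(b)\wedge\mu_{\chi_B}(c)=0$ and hence the join to be $0$; in both sub-cases the value matches $\mu_{\chi_{AB}}(a)=0$.

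The nonmembership coordinate $\gamma$ is handled dually, interchanging join with meet and $0$ with $1$. For $a\in AB$ the chosen factorization gives $\gamma_{\chi_A}(b)\vee\gamma_{\chi_B}(c)=0\vee 0=0$, so the meet $(\gamma_{\chi_A}\circ\gamma_{\chi_B})(a)=0=\gamma_{\chi_{AB}}(a)$; for $a\notin AB$ every factorization yields $\gamma_{\chi_A}(b)\vee\gamma_{\chi_B}(c)=1$ (or there is no factorization and the ``otherwise'' clause gives $1$ directly), so the meet is $1=\gamma_{\chi_{AB}}(a)$. Combining the two coordinates gives $\chi_A\circ\chi_B=\chi_{AB}$.

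There is no genuine difficulty here; the proof is a finite case analysis. The only point demanding care is the bookkeeping around the ``otherwise'' branch in the definition of $\circ$: one must confirm that when $a$ has no factorization $a=bc$ the product is declared $0$ (resp.\ $1$), and that this is consistent with $a\notin AB$, so that the non-factorizable case requires no separate treatment beyond noting it falls under $a\notin AB$. I expect that minor consistency check to be the main thing to get exactly right.
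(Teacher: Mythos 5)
Your proof is correct and complete; the paper itself gives no argument for this lemma (it is quoted from \cite{Mordeson} and \cite{mad} without proof), so there is nothing to diverge from, and your four-way case analysis on whether $a\in AB$ and whether $a$ factors at all is exactly the standard argument, with the ``otherwise'' branches of $\circ$ handled consistently. Your reading of $A$ and $B$ as non-empty \emph{subsets} of $S$ (despite the statement saying ``IFSs'') is also the right one, and matches the usage of $\chi_{R}$ and $\chi_{L}$ elsewhere in the paper.
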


\begin{theorem}
For an AG-groupoid $S$ with left identity, the following conditions are
equivalent.
\end{theorem}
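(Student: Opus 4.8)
The plan is to prove all the stated equivalences by running a single cyclic chain of implications, using the intuitionistic characteristic functions as the bridge that lets me pass between the crisp (set-theoretic) ideal statements and the intuitionistic fuzzy ones. Two genuinely different kinds of argument will be needed: the ``forward'' implications that start from intra-regularity, and the ``backward'' implication that must recover intra-regularity from a hypothesis quantified over \emph{all} intuitionistic fuzzy ideals.

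For each forward implication I would begin with the defining identity $a=(xa^{2})y$ of an intra-regular element and, applying the left invertive law $(1)$, the medial law $(2)$, the paramedial law $(3)$ and law $(4)$ one re-association at a time, rewrite it as an explicit factorization $a=pq$ in which $p$ occupies a left-ideal position and $q$ a right-ideal position. With such a factorization fixed the estimate is immediate:
\begin{equation*}
(\mu _{A}\circ \mu _{B})(a)=\dbigvee\limits_{a=pq}\{\mu _{A}(p)\wedge \mu _{B}(q)\}\geq \mu _{A}(p)\wedge \mu _{B}(q)\geq \mu _{A}(a)\wedge \mu _{B}(a)=(\mu _{A}\cap \mu _{B})(a),
\end{equation*}
where the last inequality is forced by the left/right ideal conditions on $A$ and $B$, and the symmetric computation gives $(\gamma _{A}\circ \gamma _{B})(a)\leq (\gamma _{A}\cup \gamma _{B})(a)$. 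Combined with the reverse containment supplied by Lemma \ref{as}, this yields the required equality of intuitionistic fuzzy sets.

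For the backward implication I would feed characteristic functions into the hypothesis. Given $a\in S$, the set $a^{2}S$ is both a left and a right ideal of $S$ (as already noted in the proof of Theorem \ref{3}), so by Lemma \ref{00} its intuitionistic characteristic function $\chi _{a^{2}S}$ is an intuitionistic fuzzy ideal and hence an admissible input. Applying the fuzzy hypothesis to $\chi _{a^{2}S}$ (together with $\delta =(S_{\delta },\Theta _{\delta })$ where the statement calls for it) and then invoking Lemma \ref{000}, namely $\chi _{A}\circ \chi _{B}=\chi _{AB}$, collapses the fuzzy statement into a crisp membership of the shape $a\in (Sa^{2})S$. This is precisely the containment from which intra-regularity is read off in the proof of Theorem \ref{3}, so the implication closes.

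The step I expect to be the main obstacle is the bracket bookkeeping in the forward direction. Because $S$ is non-associative, every re-association must be justified individually by one of $(1)$--$(4)$, and the factorization $a=pq$ that makes the ideal inequalities bite is not unique, so some experimentation is needed to land on factors that can genuinely be absorbed by the ideal conditions. Once the correct factorization is isolated the membership estimates are routine, and the backward direction is little more than a mechanical translation through Lemmas \ref{00} and \ref{000}.
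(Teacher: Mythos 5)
Your overall architecture matches the paper's: the forward implication $(i)\Rightarrow(iii)$ is done exactly as you describe, by rewriting $a=(xa^{2})y$ with laws $(1)$--$(4)$ into a factorization whose left factor begins with $a^{2}$ (the paper lands on $a=(a^{2}(y((yx)x)))a$) so that the right-ideal condition on $A$ and the left-ideal condition on $B$ give $(\mu _{A}\circ \mu _{B})(a)\geq (\mu _{A}\cap \mu _{B})(a)$, with Lemma \ref{as} supplying the reverse containment; $(iii)\Rightarrow(ii)$ is the characteristic-function translation through Lemmas \ref{00} and \ref{000}; and $(ii)\Rightarrow(i)$ takes $R=a^{2}S$ and $L=Sa$ and manipulates $(a^{2}S)(Sa)$ down to $(Sa^{2})S$. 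So the route is the same, and the computational obstacle you flag (finding the right bracketing) is the genuine one.

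The gap is that your plan never engages with the semiprime clause, which is not decorative in this theorem. In the backward direction it is load-bearing: from $R\cap L=RL$ alone you only know $a^{2}\in a^{2}S$, and the whole chain $a\in a^{2}S\cap Sa=(a^{2}S)(Sa)\subseteq \cdots \subseteq (Sa^{2})S$ starts from $a\in a^{2}S$, which you can only obtain because condition $(ii)$ asserts that the right ideal $a^{2}S$ is semiprime (and, one step earlier, condition $(iii)$ hands you fuzzy semiprimeness of $\chi _{a^{2}S}$, which descends to semiprimeness of $a^{2}S$ via Corollary \ref{11}). Without that step the ``collapse to $a\in (Sa^{2})S$'' you describe does not go through. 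Symmetrically, in the forward implications the semiprime statements are part of what must be \emph{proved}, not hypotheses to be used: $(i)\Rightarrow(iii)$ requires showing that every intuitionistic fuzzy right ideal of an intra-regular AG-groupoid with left identity is intuitionistic fuzzy semiprime (Lemma \ref{2}$(i)$ in the paper), and $(iii)\Rightarrow(ii)$ requires deducing that $R$ is semiprime. Your displayed inequality is fine as far as it goes, but a complete proof must add these semiprimeness verifications on both ends of the cycle.
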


$(i)$ $S$ is an intra-regular.

$(ii)$ $R\cap L=RL,$ $R$ is any right ideal and $L$ is any left ideal of $S$
such that $R$ is semiprime.

$(iii)$ $A\cap B=A\circ B,$ $A=(\mu _{A},\gamma _{A})$ is any intuitionistic
fuzzy right ideal and $B=(\mu _{B},\gamma _{B})$ is any intuitionistic fuzzy
left ideal of $S$ such that $A=(\mu _{A},\gamma _{A})$ is an intuitionistic
fuzzy semiprime.

\begin{proof}
$(i)\Longrightarrow (iii):$ Assume that $S$ is an intra-regular AG-groupoid.
Let $A=(\mu _{A},\gamma _{A})$ is any intuitionistic fuzzy right ideal and $%
B=(\mu _{B},\gamma _{B})$ is any intuitionistic fuzzy left ideal of $S.$ Now
for $a\in S$ there exists $x,y\in S$ such that $a=(xa^{2})y.$ Now by using $%
(4),$ $(1)$ and $(3),$ we have 
\begin{eqnarray*}
a &=&(x(aa))y=(a(xa))y=(y(xa))a=(y(x((xa^{2})y)))a=(y((xa^{2})(xy)))a \\
&=&(y((yx)(a^{2}x)))a=(y(a^{2}((yx)x)))a=(a^{2}(y((yx)x)))a.
\end{eqnarray*}

Therefore 
\begin{eqnarray*}
(\mu _{A}\circ \mu _{B})(a) &=&\dbigvee\limits_{a=(a^{2}(y((yx)x)))a}\{\mu
_{A}(a^{2}(y((yx)x)))\wedge \mu _{B}(a)\} \\
&\geq &\mu _{A}(a)\wedge \mu _{B}(a)=(\mu _{A}\cap \mu _{B})(a)
\end{eqnarray*}

and%
\begin{eqnarray*}
(\gamma _{A}\circ \gamma _{B})(a)
&=&\dbigwedge\limits_{a=(a^{2}(y((yx)x)))a}\left\{ \gamma
_{A}(a^{2}(y((yx)x)))\vee \gamma _{B}(a)\right\} \\
&\leq &\gamma _{A}(a)\wedge \gamma _{B}(a)=(\gamma _{A}\cup \gamma _{B})(a).
\end{eqnarray*}

Which implies that $A\circ B\supseteq A\cap B$ and by using Lemma \ref{as}, $%
A\circ B\subseteq A\cap B,$ therefore $A\cap B=A\circ B$.

$(iii)\Longrightarrow (ii)$

Let $R$ be any right ideal and $L$ be any left ideal of an AG-groupoid $S,$
then by Lemma \ref{00}, $\chi _{R}=(\mu _{\chi _{R}},\gamma _{\chi _{R}})$
and $\chi _{L}=(\mu _{\chi _{L}},\gamma _{\chi _{L}})$ are an intuitionistic
fuzzy right and intuitionistic fuzzy left ideals of $S$ respectively. As $%
RL\subseteq R\cap L$ is obvious therefore let $a\in R\cap L,$ then $a\in R$
and $a\in L.$ Now by using Lemma \ref{000} and given assumption, we have%
\begin{equation*}
\mu _{\chi _{RL}}(a)=(\mu _{\chi _{R}}\circ \mu _{\chi _{L}})(a)=(\mu _{\chi
_{R}}\cap \mu _{\chi _{L}})(a)=\mu _{\chi _{R}}(a)\wedge \mu _{\chi
_{L}}(a)=1
\end{equation*}

and similarly%
\begin{equation*}
\gamma _{\chi _{RL}}(a)=(\gamma _{\chi _{R}}\circ \gamma _{\chi
_{L}})(a)=(\gamma _{\chi _{R}}\cup \gamma _{\chi _{L}})(a)=\gamma _{\chi
_{R}}(a)\vee \gamma _{\chi _{L}}(a)=1.
\end{equation*}

Which implies that $a\in RL$ and therefore $R\cap L=RL.$ Now by using
Corollary \ref{11}, $R$ is semiprime.

$(ii)\Longrightarrow (i)$

Let $S$ be an AG-groupoid, then clearly $Sa$ is a left ideal of $S$ such
that $a\in Sa$ and $a^{2}S$ is a right ideal of $S$ such that $a^{2}\in
a^{2}S.$ Since by assumption, $a^{2}S$ is semiprime, therefore $a\in a^{2}S.$
Now by using $(3),$ $(1)$ and $(4)$, we have 
\begin{eqnarray*}
a &\in &a^{2}S\cap Sa=(a^{2}S)(Sa)=(aS)(Sa^{2})=((Sa^{2})S)a=((Sa^{2})(SS))a
\\
&=&((SS)(a^{2}S))a=(a^{2}((SS)S))a\subseteq (a^{2}S)S=(SS)(aa)=a^{2}S \\
&=&(aa)S=(Sa)a\subseteq (Sa)(a^{2}S)=((a^{2}S)a)S=((aS)a^{2})S\subseteq
(Sa^{2})S.
\end{eqnarray*}

Which shows that $S$ is an intra-regular.
\end{proof}

\begin{lemma}
\label{iff}Let $A=(\mu _{A},\gamma _{A})$ be an $IFS$ of an intra-regular
AG-groupoid $S$ with left identity$,$ then $A=(\mu _{A},\gamma _{A})$ is an
intuitionistic fuzzy left ideal of $S$ if and only if $A=(\mu _{A},\gamma
_{A})$ is an intuitionistic fuzzy right ideal of $S.$
\end{lemma}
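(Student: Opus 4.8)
The plan is to prove the two implications separately, and to notice that one of them costs essentially nothing. For the direction ``right ideal $\Rightarrow$ left ideal,'' since $S$ carries a left identity, Lemma \ref{qer} already asserts that every intuitionistic fuzzy right ideal of such an AG-groupoid is an intuitionistic fuzzy left ideal. So that half requires no new work and does not even use intra-regularity; I would simply invoke Lemma \ref{qer}. All the substance lies in the converse: assuming $A=(\mu_{A},\gamma_{A})$ is an intuitionistic fuzzy left ideal, I must establish the right-ideal inequalities $\mu_{A}(xy)\geq\mu_{A}(x)$ and $\gamma_{A}(xy)\leq\gamma_{A}(x)$ for all $x,y\in S$.

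For that converse I would fix $x,y\in S$ and exploit intra-regularity of $x$: there exist $u,v\in S$ with $x=(ux^{2})v$. The guiding idea is that the left-ideal hypothesis only lets me \emph{discard left factors} (it bounds $\mu_{A}(ab)$ below by $\mu_{A}(b)$), so I want to rewrite the product $xy$ until $x^{2}$ sits as the rightmost top-level factor. Substituting the intra-regular expression for $x$ and then applying the left invertive law $(1)$, the paramedial law $(3)$, and the left invertive law $(1)$ once more, I expect to obtain the chain
\[
xy=((ux^{2})v)y=(yv)(ux^{2})=(x^{2}u)(vy)=((vy)u)x^{2}.
\]
Thus $xy$ is displayed as a product whose right-hand factor is $x^{2}$.

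From $xy=((vy)u)x^{2}$ the left-ideal condition discards the left factor to give $\mu_{A}(xy)\geq\mu_{A}(x^{2})$, and applying the left-ideal condition again to $x^{2}=xx$ yields $\mu_{A}(x^{2})\geq\mu_{A}(x)$; together these give exactly $\mu_{A}(xy)\geq\mu_{A}(x)$. The nonmembership component is handled verbatim, replacing $\geq$ and $\wedge$ by $\leq$ and $\vee$, to produce $\gamma_{A}(xy)\leq\gamma_{A}(x^{2})\leq\gamma_{A}(x)$. Hence $A$ is an intuitionistic fuzzy right ideal, completing the nontrivial direction. The only delicate point, and the step I would check most carefully, is the rewriting chain above: no new idea beyond laws $(1)$--$(4)$ and the intra-regular representation is needed, but one must verify that the particular association of factors at each step is the one to which the invoked law applies, since a misread bracketing is the natural place for an error to hide.
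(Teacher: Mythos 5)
Your proposal is correct, and it differs from the paper's proof in both halves. For the direction ``right ideal $\Rightarrow$ left ideal'' you simply cite Lemma \ref{qer}, which needs only the left identity; the paper instead redoes this direction from scratch, writing $b=(x'b^{2})y'$ and computing $\mu_{A}(ab)=\mu_{A}(b^{2}((y'a)x'))\geq\mu_{A}(b)$, so it spends intra-regularity on a half that does not require it. Your route is cleaner and makes visible that only one implication genuinely uses intra-regularity. For the substantive direction your rewriting chain
\[
xy=((ux^{2})v)y=(yv)(ux^{2})=(x^{2}u)(vy)=((vy)u)x^{2}
\]
checks out: the first and third equalities are instances of the left invertive law $(1)$ applied to $(AB)C$ with $(A,B,C)=(ux^{2},v,y)$ and $(x^{2},u,vy)$ respectively, and the middle one is the paramedial law $(3)$, valid since $S$ has a left identity. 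Landing on $x^{2}$ as the rightmost factor and then applying the left-ideal inequality twice ($\mu_{A}(xy)\geq\mu_{A}(x^{2})=\mu_{A}(xx)\geq\mu_{A}(x)$, dually for $\gamma_{A}$) is a shorter path than the paper's, which uses a longer chain of manipulations (inserting the left identity via $a=ea$ and invoking laws $(1)$, $(3)$, $(4)$) to isolate $a$ itself as the rightmost factor and apply the left-ideal condition only once. Both arguments are sound; yours trades one extra application of the hypothesis for a noticeably shorter computation. The only cosmetic slip is your remark about replacing $\wedge$ by $\vee$ in the dual argument --- no meets or joins actually occur here --- but that does not affect correctness.
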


\begin{proof}
Let $S$ be an intra-regular AG-groupoid and let $A=(\mu _{A},\gamma _{A})$
be an intuitionistic fuzzy left ideal of $S.$ Now for $a,b\in S$ there
exists $x,y,x^{^{\prime }},y^{^{\prime }}\in S$ such that $a=(xa^{2})y$ and $%
b=(x^{^{\prime }}b^{2})y^{^{\prime }},$ then by using $(1),$ $(3)$ and $(4),$
we have 
\begin{eqnarray*}
\mu _{A}(ab) &=&\mu _{A}(((xa^{2})y)b)=\mu _{A}((by)(x(aa)))=\mu
_{A}(((aa)x)(yb)) \\
&=&\mu _{A}(((xa)a)(yb))=\mu _{A}(((xa)(ea))(yb))=\mu _{A}(((ae)(ax))(yb)) \\
&=&\mu _{A}((a((ae)x))(yb))=\mu _{A}(((yb)((ae)x))a)\geq \mu _{A}(a).
\end{eqnarray*}

Similarly we can get $\gamma _{A}(ab)\leq \gamma _{A}(a),$ which implies
that $A=(\mu _{A},\gamma _{A})$ is an intuitionistic fuzzy right ideal of $%
S. $

Conversely let $A=(\mu _{A},\gamma _{A})$ be an intuitionistic fuzzy right
ideal of $S.$ Now by using $(4)$ and $(3),$ we have 
\begin{eqnarray*}
\mu _{A}(ab) &=&\mu _{A}(a((x^{^{\prime }}b^{2})y^{^{\prime }})=\mu
_{A}((x^{^{\prime }}b^{2})(ay^{^{\prime }}))=\mu _{A}((y^{^{\prime
}}a)(b^{2}x^{^{\prime }})) \\
&=&\mu _{A}(b^{2}((y^{^{\prime }}a)x))\geq \mu _{A}(b).
\end{eqnarray*}

Similarly we can get $\gamma _{A}(ab)\leq \gamma _{A}(b),$ which implies
that $A=(\mu _{A},\gamma _{A})$ is an intuitionistic fuzzy left ideal of $S.$
\end{proof}

\begin{theorem}
Let $S$ be an intra-regular AG-groupoid with left identity and let $A=(\mu
_{A},\gamma _{A})$ be an IFS, then the following conditions are equivalent.
\end{theorem}

$(i)$ $A=(\mu _{A},\gamma _{A})$ is an intuitionistic fuzzy two-sided ideal
of $S$.

$(ii)$ $A=(\mu _{A},\gamma _{A})$ is an intuitionistic fuzzy bi-ideal of $S$.

\begin{proof}
$(i)\Longrightarrow (ii)$ is simple.

$(ii)\Longrightarrow (i):$ Let $A=(\mu _{A},\gamma _{A})$ be an
intuitionistic fuzzy generalized bi-ideal of an intra-regular AG-groupoid $S$
and let $a\in S$, then there exists $x,y\in S$ such that $a=(xa^{2})y.$ Now
by using $(4),$ $(1)$ and $(3),$ we have%
\begin{eqnarray*}
\mu _{A}(ab) &=&\mu _{A}(((x(aa))y)b)=\mu _{A}(((a(xa))y)b)=\mu
_{A}((by)((ea)(xa))) \\
&=&\mu _{A}((by)((ax)(ae)))=\mu _{A}(((ae)(ax))(yb))=\mu _{A}((a((ae)x))(yb))
\\
&=&\mu _{A}(((yb)((ae)x))a)=\mu _{A}(((yb)((((xa^{2})y)e)x))a) \\
&=&\mu _{A}(((yb)((y(xa^{2}))(ex)))a)=\mu _{A}(((yb)((xe)((xa^{2})(ey))))a)
\\
&=&\mu _{A}(((yb)((xe)((ye)(a^{2}x))))a)=\mu
_{A}(((yb)((xe)(a^{2}((ye)x))))a) \\
&=&\mu _{A}(((yb)(a^{2}((xe)((ye)x))))a)=\mu
_{A}((a^{2}((yb)((xe)((ye)x))))a) \\
&\geq &\mu _{A}(a^{2})\wedge \mu _{A}(a)\geq \mu _{A}(a)\wedge \mu
_{A}(a)\wedge \mu _{A}(a)=\mu _{A}(a).
\end{eqnarray*}

Similarly we can show that $\gamma _{A}(ab)\leq \gamma _{A}(a)$ and
therefore $A=(\mu _{A},\gamma _{A})$ is an intuitionistic fuzzy right ideal
of $S$. Now by using Lemma \ref{iff}, $A=(\mu _{A},\gamma _{A})$ is an
intuitionistic fuzzy two-sided ideal of $S$.
\end{proof}

\begin{theorem}
Let $S$ be an intra-regular AG-groupoid with left identity and let $A=(\mu
_{A},\gamma _{A})$ be an IFS, then the following conditions are equivalent.
\end{theorem}

$(i)$ $A=(\mu _{A},\gamma _{A})$ is an intuitionistic fuzzy two-sided ideal
of $S$.

$(ii)$ $A=(\mu _{A},\gamma _{A})$ is an intuitionistic fuzzy $(1,2)$-ideal
of $S$.

\begin{proof}
$(i)\Longrightarrow (ii)$ is simple.

$(ii)\Longrightarrow (i):$ Let $A=(\mu _{A},\gamma _{A})$ be an
intuitionistic fuzzy generalized bi-ideal of an intra-regular AG-groupoid $S$
and let $b\in S$, then there exists $x,y\in S$ such that $b=(xb^{2})y.$ Now
by using $(4),$ $(1)$ and $(3),$ we have%
\begin{eqnarray*}
\mu _{A}(ab) &=&\mu _{A}(a((xb^{2})y))=\mu _{A}((x(bb))(ay))=\mu
_{A}((b(xb))(ay)) \\
&=&\mu _{A}(((ay)(xb))b)=\mu _{A}(((ay)(xb))((xb^{2})y))=\mu
_{A}((xb^{2})(((ay)(xb))y)) \\
&=&\mu _{A}((y((ay)(xb)))(b^{2}x))=\mu _{A}(b^{2}((y((ay)(xb)))x)) \\
&=&\mu _{A}((bb)((y((ay)(xb)))x))=\mu _{A}((x(y((ay)(xb))))(bb)) \\
&=&\mu _{A}((x(y((bx)(ya))))(bb))=\mu _{A}((x((bx)(y(ya))))(bb)) \\
&=&\mu _{A}(((bx)(x(y(ya))))(bb))=\mu _{A}(((((xb^{2})y)x)(x(y(ya))))(bb)) \\
&=&\mu _{A}((((xy)(xb^{2}))(x(y(ya))))(bb))=\mu
_{A}((((b^{2}x)(yx))(x(y(ya))))(bb)) \\
&=&\mu _{A}(((((yx)x)b^{2})(x(y(ya))))(bb))=\mu
_{A}((((y(ya))x)(b^{2}((yx)x)))(bb)) \\
&=&\mu _{A}((((y(ya))x)(b^{2}(x^{2}y)))(bb))=\mu
_{A}((b^{2}(((y(ya))x)(x^{2}y)))(bb)) \\
&=&\mu _{A}((((x^{2}y)((y(ya))x))(bb))(bb))=\mu
_{A}((a((x^{2}y)(((y(ya))x)b)))(bb)) \\
&\geq &\mu _{A}(b)\wedge \mu _{A}(b)\wedge \mu _{A}(b)=\mu _{A}(b).
\end{eqnarray*}

Similarly we can show that $\gamma _{A}(ab)\leq \gamma _{A}(b)$ and
therefore $A=(\mu _{A},\gamma _{A})$ is an intuitionistic fuzzy left ideal
of $S$. Now by using Lemma \ref{iff}, $A=(\mu _{A},\gamma _{A})$ is an
intuitionistic fuzzy two-sided ideal of $S$.
\end{proof}

\begin{theorem}
Let $S$ be an intra-regular AG-groupoid with left identity and let $A=(\mu
_{A},\gamma _{A})$ be an IFS, then the following conditions are equivalent.
\end{theorem}

$(i)$ $A=(\mu _{A},\gamma _{A})$ is an intuitionistic fuzzy bi-ideal of $S$.

$(ii)$ $A=(\mu _{A},\gamma _{A})$ is an intuitionistic fuzzy generalized
bi-ideal of $S$.

\begin{proof}
$(i)\Longrightarrow (ii)$ is obvious.

$(ii)\Longrightarrow (i):$ Let $A=(\mu _{A},\gamma _{A})$ be an
intuitionistic fuzzy generalized bi-ideal of an intra-regular AG-groupoid $S$
and let $a\in S$, then there exists $x,y\in S$ such that $a=(xa^{2})y.$ Now
by using $(4),$ $(3)$ and $(1),$ we have 
\begin{eqnarray*}
\mu _{A}(ab) &=&\mu _{A}(((x(aa))y)b)=\mu _{A}((((ea)(xa))y)b)=\mu
_{A}((((ax)(ae))y)b) \\
&=&\mu _{A}(((a((ax)e))(ey))b)=\mu _{A}(((ye)(((ax)e)a))b) \\
&=&\mu _{A}(((ye)((ae)(ax)))b)=\mu _{A}(((ye)(a((ae)x)))b) \\
&=&\mu _{A}((a((ye)((ae)x)))b)\geq \mu _{A}(a)\wedge \mu _{A}(b).
\end{eqnarray*}

Similarly we can show that $\gamma _{A}(ab)\leq \gamma _{A}(a)\vee \gamma
_{A}(b)$ and therefore $A=(\mu _{A},\gamma _{A})$ is an intuitionistic fuzzy
bi-ideal of $S$.
\end{proof}

\begin{theorem}
An AG-groupoid $S$ is an intra-regular if and only if for each
intuitionistic fuzzy right $($left, two-sided$)$ ideal $A=(\mu _{A},\gamma
_{A})$ of $S,$ $A(a)=A(a^{2})$ for all $a$ in $S.$
\end{theorem}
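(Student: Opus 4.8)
The plan is to prove both implications separately, after the key observation that the equality $A(a)=A(a^{2})$ unpacks into two componentwise statements, $\mu_{A}(a)=\mu_{A}(a^{2})$ and $\gamma_{A}(a)=\gamma_{A}(a^{2})$, each of which in turn splits into a pair of inequalities: one direction is handed to us for free by the ideal axioms, while the opposite direction is precisely the condition of intuitionistic fuzzy semiprimeness. Recognizing this split is what organizes the whole argument, and it makes clear which earlier results to invoke on each side. (All the cited results presuppose a left identity, so I would read the statement with that hypothesis in force.)

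For the forward direction I would assume $S$ is intra-regular and let $A=(\mu_{A},\gamma_{A})$ be an intuitionistic fuzzy right ideal. First I would apply the right ideal property to the product $a^{2}=aa$ to obtain $\mu_{A}(a^{2})=\mu_{A}(aa)\geq\mu_{A}(a)$ and $\gamma_{A}(a^{2})=\gamma_{A}(aa)\leq\gamma_{A}(a)$. Next I would use Lemma \ref{2}$(i)$, which says $A$ is intuitionistic fuzzy semiprime, to get the reverse inequalities $\mu_{A}(a)\geq\mu_{A}(a^{2})$ and $\gamma_{A}(a)\leq\gamma_{A}(a^{2})$. Combining the two pairs gives $\mu_{A}(a)=\mu_{A}(a^{2})$ and $\gamma_{A}(a)=\gamma_{A}(a^{2})$, i.e. $A(a)=A(a^{2})$. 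The cases of intuitionistic fuzzy left and two-sided ideals would run identically, using the left ideal axiom in the first step and Lemma \ref{2}$(ii)$, $(iii)$ respectively in the second.

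For the converse I would assume $A(a)=A(a^{2})$ holds for every intuitionistic fuzzy right (left, two-sided) ideal $A$ and every $a\in S$. Then in particular $\mu_{A}(a)=\mu_{A}(a^{2})$ forces $\mu_{A}(a)\geq\mu_{A}(a^{2})$, and likewise $\gamma_{A}(a)\leq\gamma_{A}(a^{2})$, so every such ideal is intuitionistic fuzzy semiprime. Invoking Theorem \ref{3}, which asserts that $S$ is intra-regular exactly when every intuitionistic fuzzy right (left, two-sided) ideal is intuitionistic fuzzy semiprime, I would conclude that $S$ is intra-regular.

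The computational content is light, so there is no serious obstacle; the one point needing care is the bookkeeping that $A(a)=A(a^{2})$ is strictly stronger than semiprimeness, since it also encodes the opposite inequalities. Consequently the forward direction genuinely requires both ingredients — the ideal axiom and Lemma \ref{2} — whereas the converse uses only the semiprime half and is an immediate consequence of Theorem \ref{3}.
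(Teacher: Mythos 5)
Your proof is correct and follows essentially the same route as the paper: the paper's forward direction is exactly your two inequalities (its displayed chain $\mu _{A}(a)=\cdots \geq \mu _{A}(a^{2})=\mu _{A}(aa)\geq \mu _{A}(a)$ is the semiprimeness computation of Lemma \ref{2} spliced together with the ideal axiom), and its converse is the $\chi _{a^{2}S}$ characteristic-function argument that you obtain by citing Theorem \ref{3}. Your decision to read a left identity into the hypothesis also matches the paper, whose own computation uses $e$ explicitly.
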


\begin{proof}
Assume that $S$ be an intra-regular AG-groupoid and let $A=(\mu _{A},\gamma
_{A})$ be an intuitionistic fuzzy right ideal of $S.$ Let $a\in S,$ then
there exists $x,y\in S$ such that $a=(xa^{2})y$.%
\begin{eqnarray*}
\mu _{A}(a) &=&\mu _{A}((xa^{2})y)=\mu _{A}((xa^{2})(ey))=\mu
_{A}((ye)(a^{2}x))=\mu _{A}(a^{2}((ye)x)) \\
&\geq &\mu _{A}(a^{2})=\mu _{A}(aa)\geq \mu _{A}(a).
\end{eqnarray*}

Similarly we can show that $\gamma _{A}(a)=\gamma _{A}(a)$ and therefore $%
A(a)=A(a^{2})$ holds for all $a$ in $S.$

Conversely, assume that for any intuitionistic fuzzy right ideal $A=(\mu
_{A},\gamma _{A})$ of $S,$ $A(a)=A(a^{2})$ holds for all $a$ in $S.$ As $%
a^{2}S$ is a right and also a left ideal of $S,$ then by Lemma \ref{00}, $%
\chi _{a^{2}S}=(\mu _{\chi _{a^{2}S}},\gamma _{\chi _{a^{2}S}})$ is an
intuitionistic fuzzy right and an intuitionistic fuzzy left ideal of $S$ and
therefore by given assumption and using the fact that $a^{2}\in a^{2}S$, we
have $\mu _{\chi _{a^{2}S}}(a)=\mu _{\chi _{a^{2}S}}(a^{2})=1$ and $\gamma
_{\chi _{a^{2}S}}(a)=\gamma _{\chi _{a^{2}S}}(a^{2})=0,$ which implies that $%
a\in a^{2}S.$ Now by using $(4)$ and $(2),$ we have $a\in (Sa^{2})S$ and
therefore $S$ is an intra-regular.
\end{proof}

\begin{theorem}
\label{left}Let $S$ be an AG-groupoid with left identity, then the following
conditions are equivalent.
\end{theorem}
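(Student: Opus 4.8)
The plan is to establish the stated equivalences by a cyclic chain of implications, exactly in the spirit of Theorem \ref{3} and the subsequent characterization theorems, since the conditions listed below the statement characterize intra-regularity in terms of (crisp) left ideals and their intuitionistic fuzzy analogues. Concretely, I would prove $(i)\Longrightarrow(iii)\Longrightarrow(ii)\Longrightarrow(i)$, using the characteristic-function bridge (Lemma \ref{00} and Lemma \ref{000}) to pass between the set-theoretic and fuzzy formulations so that the fuzzy statement and the crisp statement are never proved independently.

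First, for the implication from intra-regularity to the fuzzy condition, I would start from the intra-regular decomposition $a=(xa^{2})y$ that is available for each $a\in S$, and rewrite it using the left invertive law $(1)$, the medial law $(2)$, the paramedial law $(3)$ and the identity $(4)$ until $a$ is expressed in a factored form $a=pq$ in which one factor visibly carries $a^{2}$ (or $a$) in the position demanded by the product. Evaluating the fuzzy product $(\mu_{A}\circ\mu_{B})(a)=\bigvee_{a=pq}\{\mu_{A}(p)\wedge\mu_{B}(q)\}$ on this decomposition, and invoking the defining inequalities of an intuitionistic fuzzy left (right) ideal together with Lemma \ref{iff}, which collapses the distinction between fuzzy left and fuzzy right ideals in the intra-regular setting, yields the required inequality; the reverse containment is immediate from Lemma \ref{as}.

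Next, for the bridging step from the fuzzy condition to its crisp counterpart, I would specialize to characteristic functions. By Lemma \ref{00} the characteristic function of a left ideal is an intuitionistic fuzzy left ideal, and by Lemma \ref{000} one has $\chi_{A}\circ\chi_{B}=\chi_{AB}$, so products of these functions encode products of the underlying sets. Substituting these into the fuzzy identity and reading off the membership values, which are only $0$ or $1$, translates the fuzzy equality directly into the corresponding set-theoretic equality or inclusion for the ideals in question.

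Finally, for the closing implication back to intra-regularity, I would take the concrete ideals $Sa$ and $a^{2}S$, observe $a^{2}\in a^{2}S$, apply the crisp hypothesis (together with semiprimeness wherever it is needed) to deduce $a\in a^{2}S$, and then chase this membership through laws $(1)$, $(3)$ and $(4)$ exactly as in the final lines of Theorem \ref{3} until arriving at $a\in(Sa^{2})S$, which is precisely intra-regularity. The main obstacle is the first implication: the non-associative rewriting of $a=(xa^{2})y$ into the one factorization the fuzzy product can exploit is delicate, and, as in each of the preceding equivalence theorems, it requires its own carefully ordered sequence of applications of $(1)$--$(4)$ rather than any uniform associative normal form.
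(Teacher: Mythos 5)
Your proposal is aimed at the wrong pair of conditions. Theorem \ref{left} asserts the equivalence of exactly two statements: $(i)$ $S$ is intra-regular, and $(ii)$ every intuitionistic fuzzy left (right, two-sided) ideal of $S$ is idempotent, i.e. $A\circ A=A$. There is no condition $(iii)$, no second ideal $B$, no identity $R\cap L=RL$ coupling a right ideal with a left ideal, and semiprimeness plays no role here; the three-step cycle you describe, with its bridge through a crisp product identity and an appeal to semiprimeness of $a^{2}S$, is the skeleton of the separate theorem on $R\cap L=RL$, not of this one. Because of that, the one piece of content the forward direction actually needs is missing: you must rewrite $a=(xa^{2})y$ as a product $a=pq$ in which \emph{both} factors are seen by a single intuitionistic fuzzy left ideal $A$ with value at least $\mu_{A}(a)$. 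The paper's choice is $a=(((xe)y)a)a$, obtained from $(4)$, $(1)$ and $(3)$; then $(\mu_{A}\circ\mu_{A})(a)\geq\mu_{A}(((xe)y)a)\wedge\mu_{A}(a)\geq\mu_{A}(a)$, which together with $\mu_{A}\circ\mu_{A}\subseteq\mu_{A}$ from Lemma \ref{as} gives idempotency, and Lemma \ref{iff} then transfers the conclusion to right and two-sided ideals. Your generic description of ``a factored form in which one factor visibly carries $a^{2}$'' does not pin this down, and for idempotency it is not enough that one factor be favourable.

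Your closing implication is closer in spirit to the paper's, but the hypothesis you would be applying there is idempotency, not a product identity: one takes the left ideal $Sa$ (which contains $a$ because of the left identity), passes to $\chi_{Sa}$ via Lemma \ref{00}, combines $\chi_{Sa}\circ\chi_{Sa}=\chi_{(Sa)^{2}}$ from Lemma \ref{000} with $A\circ A=A$ to conclude $a\in(Sa)^{2}$, and then chases $(Sa)^{2}\subseteq(Sa^{2})S$ through $(1)$, $(2)$ and $(3)$; a parallel computation with $a^{2}S$ handles the right-ideal version. So while your toolkit (characteristic functions, Lemmas \ref{00}, \ref{000}, \ref{as}, \ref{iff}, and a rewriting of the intra-regular expansion) is the right one, the proposal as written targets a different equivalence and omits both the specific decomposition and the idempotency argument on which this theorem turns.
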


$(i)$ $S$ is an intra-regular.

$(ii)$ Every intuitionistic fuzzy left (right, two-sided) ideal of $S$ is
idempotent.

\begin{proof}
$(i)$ $\Longrightarrow (ii):$ Let $S$ be an intra-regular AG-groupoid with
left identity and let $a\in S,$ then there exists $x,y\in S$ such that $%
a=(xa^{2})y$. Now by using $(4),$ $(1)$ and $(3),$ we have%
\begin{equation*}
a=(x(aa))y=(a(xa))y=(y(xa))a=((ex)(ya))a=((ay)(xe))a=(((xe)y)a)a.
\end{equation*}%
Let $A=(\mu _{A},\gamma _{A})$ be an intuitionistic fuzzy left ideal of $S,$
then by using Lemma \ref{00}, we have $\mu _{A}\circ \mu _{A}\subseteq \mu
_{A}$ and also we have%
\begin{equation*}
(\mu _{A}\circ \mu _{A})(a)=\dbigvee\limits_{a=(((xe)y)a)a}\{\mu
_{A}(((xe)y)a)\wedge \mu _{A}(a)\}\geq \mu _{A}(a)\wedge \mu _{A}(a)=\mu
_{A}(a).
\end{equation*}

Which implies that $\mu _{A}\circ \mu _{A}\supseteq \mu _{A}$ and similarly
we can get $\gamma _{A}\circ \gamma _{A}\subseteq \gamma _{A}$. Now by using
Lemma \ref{00}, $\mu _{A}\circ \mu _{A}\subseteq \mu _{A}$ and $\gamma
_{A}\circ \gamma _{A}\supseteq \gamma _{A}.$ Thus that $A=(\mu _{A},\gamma
_{A})$ is idempotent and by using Lemma \ref{iff}, every intuitionistic
fuzzy right and two-sided is idempotent.

$(ii)$ $\Longrightarrow (i)$

Assume that every left ideal of an AG-groupoid $S$ with left identity is
idempotent and let $a\in S$. Since $Sa$ is a left ideal of $S$, therefore by
Lemma \ref{00}, its characteristic function $\chi _{Sa}=(\mu _{\chi
_{Sa}},\gamma _{\chi _{Sa}})$ is an intuitionistic fuzzy left ideal of $S.$
Since $a\in $ $Sa$ therefore $\mu _{\chi _{Sa}}(a)=1.$ and $\gamma _{\chi
_{Sa}}(a)=0$. Now by using the given assumption and Lemma \ref{000}, we have 
\begin{equation*}
\mu _{\chi _{Sa}}\circ \mu _{\chi _{Sa}}=\mu _{\chi _{Sa}}\text{ and }\mu
_{\chi _{Sa}}\circ \mu _{\chi _{Sa}}=\mu _{\chi _{(Sa)^{2}}}.
\end{equation*}

Thus we have $(\mu _{\chi _{(Sa)^{2}}})(a)=(\mu _{\chi _{Sa}})(a)=1$ and
similarly we can get $(\gamma _{\chi _{(Sa)^{2}}})(a)=(\gamma _{\chi
_{Sa}})(a)=0,$ which implies that $a\in (Sa)^{2}.$ Now by using $(1),$ $(2)$
and $(3),$ we have%
\begin{eqnarray*}
a &\in &(Sa)^{2}=(Sa)(Sa)=((Sa)a)S\subseteq
((Sa)((Sa)(Sa)))S=((Sa)((SS)(aa)))S \\
&=&((Sa)(Sa^{2}))S=((a^{2}S)(aS))S=(((aS)S)a^{2})S\subseteq (Sa^{2})S.
\end{eqnarray*}

Which shows that $S$ is an intra-regular.

Let every right ideal of an AG-groupoid $S$ with left identity is idempotent 
$a\in S$. Clearly $a^{2}S$ is a right ideal of $S$, therefore by Lemma \ref%
{00}, $\chi _{a^{2}S}=(\mu _{\chi _{a^{2}S}},\gamma _{\chi _{a^{2}S}})$ is
an intuitionistic fuzzy right ideal of $S.$ As $a\in a^{2}S$ therefore $\mu
_{\chi _{a^{2}S}}(a)=1$ and $\gamma _{\chi _{a^{2}S}}(a)=0$. Now by using
the given assumption and Lemma \ref{000}, we have 
\begin{equation*}
\mu _{\chi _{a^{2}S}}\circ \mu _{\chi _{a^{2}S}}=\mu _{\chi _{a^{2}S}}\text{
and }\mu _{\chi _{a^{2}S}}\circ \mu _{\chi _{a^{2}S}}=\mu _{\chi _{a^{2}S}}.
\end{equation*}

Thus we get $(\mu _{\chi _{a^{2}S}})(a)=(\mu _{\chi _{a^{2}S}})(a)=1$ and
similarly we can get $(\gamma _{\chi _{a^{2}S}})(a)=(\gamma _{\chi
_{a^{2}S}})(a)=0,$ which implies that $a\in (a^{2}S)^{2}.$ Now by using $(3)$
and $(1),$ we have%
\begin{equation*}
a\in
(a^{2}S)^{2}=(a^{2}S)(a^{2}S)=(Sa^{2})(Sa^{2})=((Sa^{2})a^{2})S\subseteq
(Sa^{2})S.
\end{equation*}

Which shows that $S$ is an intra-regular.
\end{proof}

Note that if an AG-groupoid $S$ contains a left identity, then $S=S\circ S.$

\begin{theorem}
For an AG-groupoid $S$ with left identity$,$ the following conditions are
equivalent.
\end{theorem}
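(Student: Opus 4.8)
The statement is an equivalence list, so the plan is to prove it as a cycle of implications, taking \textbf{intra-regularity} as the anchor and reducing every one of the remaining conditions (which, in the spirit of the paper's earlier results and of the freshly recorded identity $S=S\circ S$, will be product/intersection identities on intuitionistic fuzzy ideals such as $A\cap B=A\circ B$ or $A\cap B=B\circ A$) either to it or from it. Two engines are available. In the \emph{algebraic} direction I have the defining expansion of an intra-regular element, $a=(xa^{2})y$, together with the four identities $(1)$--$(4)$. In the \emph{back} direction I have the dictionary between crisp ideals and their characteristic functions supplied by Lemma \ref{00} and Lemma \ref{000} ($\chi_{A}\circ\chi_{B}=\chi_{AB}$), which transports any statement about intuitionistic fuzzy ideals into one about ordinary ideals and conversely. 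The identity $S=S\circ S$ will be invoked whenever the argument needs to split a factor $S$ into a product.

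First I would prove the implication from intra-regularity to the fuzzy-product condition. Fix $a\in S$ and write $a=(xa^{2})y$. As in the proof establishing $A\cap B=A\circ B$ and in Lemma \ref{iff}, I would massage this expression by $(4)$, $(1)$ and $(3)$ until $a$ is displayed as a single product $a=pq$ whose left factor $p$ is visibly of the form forced by the relevant ideal type (e.g. $p\in a^{2}(\cdots)$), so that the supremum defining $\circ$ is bounded below by $\mu_{A}(a)\wedge\mu_{B}(a)$ and, dually, the infimum is bounded above by $\gamma_{A}(a)\vee\gamma_{B}(a)$. This yields $A\circ B\supseteq A\cap B$, while the reverse inclusion $A\circ B\subseteq A\cap B$ is immediate from Lemma \ref{as}; equality then gives the product condition.

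For the converse I would descend to characteristic functions. For an arbitrary $a$, the sets $Sa$ and $a^{2}S$ are the canonical left and right ideals containing $a$ and $a^{2}$, and by Lemma \ref{00} their characteristic functions are intuitionistic fuzzy left and right ideals. Feeding these into the assumed fuzzy identity and using Lemma \ref{000} to rewrite $\chi_{R}\circ\chi_{L}$ as $\chi_{RL}$, I obtain a membership equality $\mu_{\chi_{(\cdots)}}(a)=1$ and $\gamma_{\chi_{(\cdots)}}(a)=0$, which forces $a$ into the product ideal. The closing step is then the inclusion chain $a\in a^{2}S=(Sa)a\subseteq\cdots\subseteq(Sa^{2})S$, run through $(1)$--$(3)$ exactly as in the proofs of Theorem \ref{3} and Theorem \ref{left}; arriving at $a\in(Sa^{2})S$ is precisely intra-regularity.

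The main obstacle is the non-associative bookkeeping. Since $S$ is only an AG-groupoid, no reassociation is free: each step must be justified by one of $(1)$ left invertive, $(2)$ medial, $(3)$ paramedial, or $(4)$, and it is the \emph{order} of application that makes an expression collapse to the required shape. Rather than rediscover these chains, I would reuse the verified reductions already present — in particular $a\in a^{2}S=(aa)S=(Sa)a\subseteq(Sa)(a^{2}S)\subseteq(Sa^{2})S$ from Theorem \ref{3}, and the expansion $a=(((xe)y)a)a$ from Theorem \ref{left} — adapting them so that the left factor carries the $a^{2}$ the hypothesis needs. The only other point demanding care is keeping the two membership directions correctly paired throughout, namely $\supseteq$ for the $\mu$-components and $\subseteq$ for the $\gamma$-components.
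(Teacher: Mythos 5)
There is a genuine gap: you have reconstructed the wrong condition $(ii)$, so the argument addresses a different theorem. The conditions of this particular theorem are $(i)$ $S$ is intra-regular and $(ii)$ $A=(S\circ A)^{2}$ for every intuitionistic fuzzy left (right, two-sided) ideal $A=(\mu_{A},\gamma_{A})$ of $S$; it is not an $A\cap B=A\circ B$ statement (that equivalence, with $A$ semiprime, is an earlier theorem, and your sketch essentially reproduces its proof). Nothing in your proposal engages with the identity $A=(S\circ A)^{2}$. In the forward direction one must prove two inclusions of a different shape: first $(S\circ A)^{2}\subseteq A$, which the paper gets by noting that $S\circ A$ is again an intuitionistic fuzzy left ideal, hence idempotent by Theorem \ref{left}, so $(S\circ A)^{2}=S\circ A\subseteq A$; and second $(S\circ A)^{2}\supseteq A$, which requires a \emph{two-level} factorization $a=(xa)\bigl((y((ye)(xa)))a\bigr)$, because $(S\circ \mu_{A})^{2}(a)$ is a supremum over factorizations $a=uv$ of terms that are themselves suprema over factorizations of $u$ and of $v$. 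The single factorization $a=pq$ bounding $\mu_{A}(a)\wedge\mu_{B}(a)$ that your plan produces does not give this.

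For the converse, your descent to the characteristic functions of $Sa$ and $a^{2}S$ does not match the hypothesis: you would need the crisp analogue $R=(SR)^{2}$ to extract a membership statement, and your setup never produces $a\in(Sa)^{2}$ or $a\in(a^{2}S)^{2}$ from the actual condition $(ii)$. The paper instead argues purely formally with Lemma \ref{as}: $\mu_{A}=(S\circ\mu_{A})^{2}\subseteq\mu_{A}\circ\mu_{A}\subseteq S\circ\mu_{A}\subseteq\mu_{A}$ (inserting $S=S\circ S$ and the left invertive law in the right-ideal case), which shows every such ideal is idempotent, and then quotes Theorem \ref{left} to conclude intra-regularity; the characteristic-function machinery you invoke lives inside the proof of Theorem \ref{left}, not at this level. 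Your closing inclusion chain $a\in a^{2}S\subseteq\cdots\subseteq(Sa^{2})S$ would be the right final move only after such a membership has been secured, which your argument does not do.
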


$(i)$ $S$ is an intra-regular.

$(ii)$ $A=(S\circ A)^{2},$ where $A=(\mu _{A},\gamma _{A})$ is any
intuitionistic fuzzy left (right, two-sided) ideal of $\ S$.

\begin{proof}
$(i)$ $\Longrightarrow (ii)$

Let $S$ be a an intra-regular AG-groupoid and let $A=(\mu _{A},\gamma _{A})$
be any intuitionistic fuzzy left ideal of $S,$ then it is easy to see that $%
S\circ A$ is also an intuitionistic fuzzy left ideal of $S.$ Now by using
Theorem \ref{left}, $S\circ A$ is idempotent and therefore, we have%
\begin{equation*}
(S\circ A)^{2}=S\circ A\subseteq A.
\end{equation*}

Now let $a\in S,$ since $S$ is an intra-regular therefore there exists $x\in
S$ such that $a=(xa^{2})y$ and by using $(4),$ $(3)$ and $(1),$ we have%
\begin{eqnarray*}
a &=&(x(aa))y=(a(xa))y=(((xa^{2})y)(xa))(ey)=(ye)((xa)((xa^{2})y)) \\
&=&(xa)((ye)((xa^{2})y))=(xa)(((ye)(x(aa)))y)=(xa)(((ye)(a(xa)))y) \\
&=&(xa)((a((ye)(xa)))y)=(xa)((y((ye)(xa)))a)=(xa)p
\end{eqnarray*}

where $p=((y((ye)(xa)))a)\ $and therefore, we have%
\begin{eqnarray*}
(S\circ \mu _{A})^{2}(a)
&=&\dbigvee\limits_{a=(xa)((y((ye)(xa)))a)}\{(S\circ \mu _{A})(xa)\wedge
(S\circ \mu _{A})((y((ye)(xa)))a)\} \\
&\geq &(S\circ \mu _{A})(xa)\wedge (S\circ \mu _{A})((y((ye)(xa)))a) \\
&=&\dbigvee\limits_{xa=xa}\{S(x)\wedge \mu _{A}(a)\}\wedge
\dbigvee\limits_{p=(y(y(xa)))a}\{S(y((ye)(xa)))\wedge \mu _{A}(a)\} \\
&\geq &S(x)\wedge \mu _{A}(a)\wedge S(y((ye)(xa)))\wedge \mu _{A}(a)=\mu
_{A}(a).
\end{eqnarray*}%
Similarly we can get $(S\circ \gamma _{A})^{2}(a)\leq \gamma _{A}(a)$, which
implies that $(S\circ A)^{2}\supseteq A.$ Thus we get the required $%
A=(S\circ A)^{2}$.

$(ii)$ $\Longrightarrow (i)$

Let $A=(S\circ A)^{2}$ holds for any intuitionistic fuzzy left ideal $A=(\mu
_{A},\gamma _{A})$ of$\ S,$ then by using Lemma \ref{as} and given
assumption, we have%
\begin{equation*}
\mu _{A}=(S\circ \mu _{A})^{2}\subseteq \mu _{A}^{2}=\mu _{A}\circ \mu
_{A}\subseteq S\circ \mu _{A}\subseteq \mu _{A}.
\end{equation*}

Which shows that $\mu _{A}=\mu _{A}\circ \mu _{A}$, similarly $\gamma
_{A}=\gamma _{A}\circ \gamma _{A}$ and therefore $A=A\circ A.$ Thus by using
Lemma \ref{left}, $S$ is an intra-regular.

Let $A=(S\circ A)^{2}$ holds for any intuitionistic fuzzy right ideal $%
A=(\mu _{A},\gamma _{A})$ of $\ S,$ then by using Lemma \ref{as}, given
assumption and $(1)$, we have%
\begin{equation*}
\mu _{A}=(S\circ \mu _{A})^{2}=((S\circ S)\circ \mu _{A})^{2}=((\mu
_{A}\circ S)\circ S)^{2}\subseteq \mu _{A}^{2}=\mu _{A}\circ \mu
_{A}\subseteq \mu _{A}\circ S\subseteq \mu _{A}.
\end{equation*}

Which shows that $\mu _{A}=\mu _{A}\circ \mu _{A}$, similarly $\gamma
_{A}=\gamma _{A}\circ \gamma _{A}$ and therefore $A=A\circ A.$ Thus by using
Lemma \ref{left}, $S$ is an intra-regular.
\end{proof}


\begin{thebibliography}{99}
\bibitem{at} K. T. Atanassov, Intuitionistic fuzzy\textit{\ sets}, Fuzzy
Sets Syst., $20$ $(1986),87-96.$

\bibitem{cha} C. L.Chang , Fuzzy topological spaces, J. Math. Anal. Appl., $%
24$ $(1968),$ $182-190.$

\bibitem{6} B. Davvaz, W. A. Dudek Y. B. Jun\textit{, }Intuitionistic fuzzy $%
H_{v}$-submodules, Inform. Sci., $176$ $(2006)$ $285$-$300$.

\bibitem{7} S. K. De, R. Biswas, A. R. Roy, An application of intuitionistic
fuzzy sets in medical diagnosis, Fuzzy Sets Syst., $117$ $(2001),$ $209-213.$

\bibitem{8} L. Dengfeng, C. Chunfian, New similarity measures of
intuitionistic fuzzy sets and applications to pattern recognitions, Pattern
Reconit Lett., $23$ $(2002),221-225.$

\bibitem{hol} P. Holgate, Groupoids satisfying a simple invertive law, The
Math. Stud., $(1992)$ $1-4$ $(61):101-106.$

\bibitem{10} Y. B. Jun, Intuitionistic fuzzy bi-ideals of ordered
semigroups, Kyungpook Math. J. $45$ $(2005)$, $527$-$537$.

\bibitem{Kaz} M. A. Kazim and M. Naseeruddin,\textit{\ }On almost
semigroups, The Alig. Bull. Math., $2$ $(1972),1-7.$

\bibitem{mad} M. Khan M. Faisal Iqbal and M. N. A Khan, On Anti Fuzzy Ideals
in Left Almost Semigroups, Journal of Mathematics Research, $2$ $(2010),$ $%
203-210.$

\bibitem{15} K. H. Kim and Y. B. Jun, Intuitionistic fuzzy interior ideals
of semigroups\textit{,} Int. J. Math. Math. Sci., $27(5)(2001)$, $261$-$267.$

\bibitem{16} K. H. Kim and Y. B. Jun, Intuitionistic fuzzy ideals of
semigroups, Indian J. Pure Appl. Math., $33(4)(2002)$, $443$-$449$.

\bibitem{N. Kuroki} N. Kuroki, Fuzzy bi-ideals in semigroups, Comment. Math.
Univ. St. Pauli, $27$ $(1979)$, $17-21$.{}

\bibitem{Mordeson} J. N. Mordeson, D. S. Malik and N. Kuroki, \textit{Fuzzy
semigroups}. Springer-Verlag, Berlin, Germany $(2003)$.

\bibitem{Mus3} Q. Mushtaq and S. M. Yusuf,\textit{\ }On LA-semigroups, The\
Alig. Bull. Math., $8$ $(1978),65-70.$

\bibitem{nas} M. Naseeruddin, Some studies in almost semigroups and flocks,
Ph.D., thesis. Aligarh Muslim University, Aligarh, India, $(1970)$.

\bibitem{A. Rosenfeld} A. Rosenfeld, Fuzzy groups, J. Math. Anal. Appl., $35$
$(1971)$, $512-517$.

\bibitem{20} M. Shabir and A. Khan, Intuitionistic fuzzy interior ideals of
ordered semigroups, (to appear in J. Applied Math. Inform.)

\bibitem{21} M. Shabir and A. Khan, Ordered semigroups characterized by
their intuitionistic fuzzy generalized bi-ideals, (to appear in Fuzzy
Systems and Mathematics).

\bibitem{L.A.Zadeh} L. A. Zadeh, Fuzzy sets. Inform. Control, $8$ $(1965)$, $%
338-353$.
\end{thebibliography}
\end{document}